\newtheorem*{theorem*}{Theorem}
\theoremstyle{definition}\newtheorem{definition}{Definition}[section]
\theoremstyle{plain}\newtheorem{theorem}{Theorem}[section]
\newtheorem{lemma}[theorem]{Lemma}
\newcommand{\limmy}[3]{\underset{#1 \to #2}{\mathrm{lim}}\text{ }#3}
\title{$q$-Deformed Discrete Whittaker Processes}
\author{Richard Ninness}
\date{}
\begin{document}

\maketitle

\begin{abstract}
    We consider a Markov process on non-negative integer arrays of a certain shape, this shape being determined by general parameters in the model which correspond to drifts. In the case where these drifts are trivial, the arrays are reverse plane partitions. This model is closely related to the Whittaker functions of the quantum group $U_q(\mathfrak{sl}_{r+1})$ and the Toda lattice. Moreover, the Markov process we introduce can be viewed as a one-parameter deformation of the discrete Whittaker processes introduced by Neil O'Connell in \cite{neil}. We show that the $q$-deformed Markov process has non-trivial Markov projections to skew shapes.
\end{abstract}

\section{Introduction}
\subsection{The Model} 
\indent Let $\lambda=(\lambda_1,\lambda_2,\dots)$ be a Young diagram, i.e., a weakly decreasing (finite) sequence of nonnegative integers $\lambda_1 \geq \lambda_2 \geq \dots\geq 0$. We will often consider the Young diagram $\delta_{r}=(r-1,r-2,\dots,1)$ for $r \geq 2$, which is referred to as a staircase shape, with $\delta_1=\emptyset$ being the empty Young diagram. A reverse plane partition filling $\lambda$ is an array $\pi=(\pi_{ij})_{(i,j)\in\lambda}$ of nonnegative integers which are weakly increasing along rows and columns.
\begin{figure}[h]
    \centering
    \captionsetup{justification=centering}
    \includegraphics[scale=0.5]{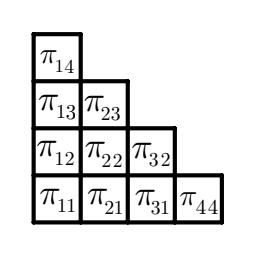}
    \caption{Reverse plane partition with staircase shape $\delta_5=(4,3,2,1)$.}
    \label{fig:fig1}
\end{figure}
We will work with shapes other than Young diagrams, such as skew Young diagrams $\lambda/\mu$ for Young diagrams $\mu \subset \lambda$. The skew shape $\lambda/\mu$ is defined to be the set-theoretic difference $\lambda\backslash \mu$. Naturally, one can restrict reverse plane partitions filling $\lambda$ to subsets $\lambda/\mu$ in order to get reverse plane partitions filling a skew Young diagram. We refer to the restriction from $(\pi_{ij})_{(i,j)\in\lambda}$ to $(\pi_{ij})_{(i,j)\in\lambda/\mu}$ as a projection onto boundary values, where the elements of the latter array are the boundary values. Our goal in this paper is to consider Markov processes on reverse plane partitions filling $\lambda$ whose projection onto certain boundary values is still Markovian. \newline
\indent The simplest case of the main object considered in this paper is a Markov process on reverse plane partitions, say ones of shape $\delta_{r+1}$, such that at each position $(i,j)$ there is an exponential clock of rate $q^{\pi_{i+1,j-1}-\pi_{ij}}(1-q^{\pi_{ij}-\pi_{i,j-1}})(1-q^{\pi_{ij}-\pi_{i-1,j}})$. Once this clock rings, the particle at $(i,j)$ will jump from $\pi_{ij} \in \mathbb{N}$ to $\pi_{ij}-1 \in \mathbb{N}$. We use the conventions that $\pi_{ij}=0$ if $i\leq0$ or $j\leq 0$, and moreover if $\pi_{ij}=0$ when the exponential clock at $(i,j)$ rings then $\pi_{ij}$ will stay at $0$. Note that $\pi=(0)_{(i,j) \in \delta_{r+1}}$ is an absorbing state for this Markov process. We will also incorporate drifts $\alpha_1,\dots,\alpha_r$ through parameters $z_{i,j}=q^{\alpha_{i+1}+\dots+\alpha_{j}}$, and in this more general setting the particle at position $(i,j)$ will jump up by one at a rate $z_{i,r}q^{\pi_{i+1,j-1}-\pi_{ij}}(1-q^{\pi_{ij}-\pi_{i,j-1}})(1-q^{\pi_{ij}-\pi_{i-1,j}}z_{i-1,i+j-1})$. This is a model of a continuous time interacting particle process, with interactions between nearest-neighbors across the two rows $(\pi_{ij})_{i+j=s}$ and $(\pi_{ij})_{i+j=s+1}$ being present for all $s=2,\dots,r$. Moreover, there are slightly weaker interactions between nearest-neighbors in a single row $(\pi_{ij})_{i+j=s}$ for $s=2,\dots,r+1$ which are determined by the $q^{\pi_{i+1,j-1}-\pi_{ij}}$ factor in the jump rates.
\begin{figure}[h]
    \centering
    \captionsetup{justification=centering}
    \includegraphics[scale=0.4]{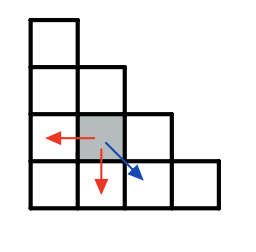}
    \caption{Depicted is a staircase $\delta_{r+1}$, red arrows for nearest-neighbor interactions between adjacent diagonals, blue arrow for nearest-neighbor interaction within an individual diagonal.}
    \label{fig:fig2}
\end{figure}\newline
\indent The above can be generalized to a Markov process on nonnegative integer arrays $(\pi_{ij})_{(i,j) \in \lambda}$ satisfying $\pi_{ij}\geq \pi_{i,j-1},\pi_{i-1,j}-\alpha_i-\dots-\alpha_{i+j-1}$, where $\lambda$ is a fixed Young diagram. Let $\ell(\lambda)$ denote the length of the Young diagram $\lambda$, i.e. $\lambda_i=0$ for $i > \ell(\lambda)$ and $\ell(\lambda)$ is minimal. Denote by $\lambda^{\circ}$ the set of points $(i,j) \in \lambda$ for which $(i,j+1) \in \lambda$ and $(i+1,j) \in \lambda$. We will also fix an additional Young diagram $\mu \subset \lambda^{\circ}$, we should imagine the boundary of $\lambda/\mu$ as dividing $\lambda$ into the two distinct regions $\lambda/\mu$ and $\mu$. When generalizing the above Markov process to a reverse plane partition filling $\lambda$ with boundary values indexed by $\lambda/\mu$, a set of problematic points $\mathrm{Vert}_{\mu}(\lambda)$ arises. Define
\[\mathrm{Vert}_{\mu}(\lambda)=\{(i,j) \in \lambda/\mu : (i-1,j) \in \mu, (i,j-1) \in \lambda/\mu\}\]
to be the set of points whose components are certain vertical paths directed downward along the exterior of $\mu$. (Figure \ref{fig:fig3} depicts the set $\mathrm{Vert}_{\mu}(\lambda)$.) Note that $\mathrm{Vert}_{\mu}(\lambda)=\emptyset$ if $\mu$ is a staircase shape. For points $(i,j) \in \mathrm{Vert}_{\mu}(\lambda)$ we assign the particle at that position an exponential clock of rate $z_{i,\ell(\lambda)}z_{i,i+j-1}^{-1}q^{\pi_{i,j-1}-\pi_{ij}}(1-q^{\pi_{ij}-\pi_{i,j-1}})(1-z_{i-1,i+j-1}q^{\pi_{ij}-\pi_{i-1,j}})$. If $(i,j) \in \lambda \backslash \mathrm{Vert}_{\mu}(\lambda)$ then we assign an exponential clock of rate $z_{i,\ell(\lambda)}q^{\pi_{i+1,j-1}-\pi_{ij}}(1-q^{\pi_{ij}-\pi_{i,j-1}})(1-z_{i-1,i+j-1}q^{\pi_{ij}-\pi_{i-1,j}})$ as usual. One can view $\lambda/\mu$ as a barrier, and when edges cross this barrier they introduce different interaction among nearest-neighbors.
\begin{figure}[h]
    \centering
    \captionsetup{justification=centering}
    \includegraphics[scale=0.4]{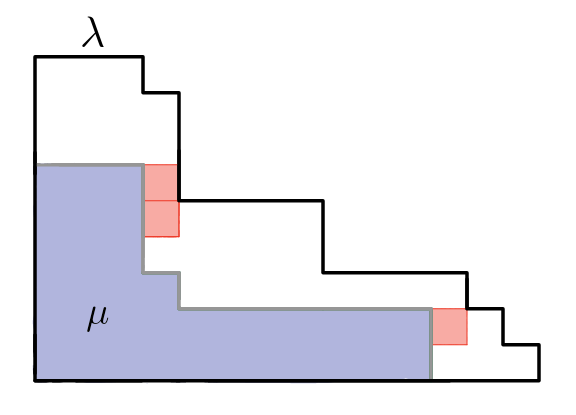}
    \caption{Depicted is a Young diagram $\lambda$ with $\mu \subset \lambda^{\circ}$. The grey border signifies the boundary of $\lambda/\mu$, the red shaded squares correspond to points in $\mathrm{Vert}_{\mu}(\lambda)$.}
    \label{fig:fig3}
\end{figure}\newline
\indent Our model is a $q$-deformation of the one considered in \cite{neil}, with many of the relevant quantities converging---after proper rescaling---to those in \cite{neil} as $q \to 1$. The interactions between nearest-neighbors across rows $(\pi_{ij})_{i+j=s},(\pi_{ij})_{i+j=s+1}$ are the natural $q$-deformation of the interactions present in the model from \cite{neil}, but the interaction among nearest-neighbors coming from the $q^{\pi_{i+1,j-1}-\pi_{ij}}$, $q^{\pi_{i,j-1}-\pi_{ij}}$ factors is unique to this model and will disappear as $q \to 1$.
\subsection{Main Results}
\indent Consider the Markov process on reverse plane partitions of staircase shape $\delta_{r+1}=(r,r-1,\dots,1)$ with generator
\begin{equation}\label{introthingy1}G^r = \sum_{(i,j) \in \delta_{r+1}} q^{\pi_{i+1,j-1}-\pi_{ij}}(1-q^{\pi_{ij}-\pi_{i,j-1}})(1-q^{\pi_{ij}-\pi_{i-1,j}})D_{\pi_{ij}}\end{equation}
where $D_{\pi_{ij}}F=F(\dots,\pi_{ij}-1,\dots)-F(\dots,\pi_{ij},\dots).$ In section $3$ we prove the following
\begin{theorem}\label{smalltheorem}
    Let $\pi(t)$ be a Markov process on $\Pi^{\delta_{r+1}}$. Suppose that, initially, the conditional law of $\pi(t)$ is proportional to
    \[\widetilde{W}_{\delta_{r+1},\delta_r}(\pi;q):=q^{\displaystyle \sum_{(i,j) \in \delta_{r}}\pi_{ij}(\pi_{ij}-\pi_{i+1,j-1})} \prod_{(i,j) \in \delta_r} \binom{\pi_{i+1,j}}{\pi_{ij}}_q \binom{\pi_{i,j+1}}{\pi_{ij}}_q. \]
    Then, if $\pi$ evolves according to $G^{r}$, the boundary values $\pi(t)|_{\delta_{r+1}/\delta_{r}}$ will evolve as a Markov process on $\Pi^{\delta_{r+1}/\delta_r}$ with an explicit generator given by \eqref{doobtransform}.
\end{theorem}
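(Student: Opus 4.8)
The plan is to deduce the statement from the Markov functions (intertwining) criterion of Rogers and Pitman in its continuous-time form; the discrete-time version of this device underlies the corresponding result in \cite{neil}. Write $\phi\colon\Pi^{\delta_{r+1}}\to\Pi^{\delta_{r+1}/\delta_r}$ for the restriction map. The first point is that $\delta_{r+1}/\delta_r$ is the single anti-diagonal $\{(i,j):i+j=r+1\}$, so that each fibre $\phi^{-1}(\eta)$ is \emph{finite}: for $(i,j)\in\delta_r$ the chain of reverse-plane-partition inequalities $\pi_{ij}\le\pi_{i,j+1}\le\cdots\le\pi_{i,r+1-i}$ bounds every interior coordinate by a boundary one. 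Hence
\[
h(\eta):=\sum_{\pi\in\phi^{-1}(\eta)}\widetilde{W}_{\delta_{r+1},\delta_r}(\pi;q)<\infty,\qquad
\Lambda(\eta,\pi):=\frac{\widetilde{W}_{\delta_{r+1},\delta_r}(\pi;q)}{h(\eta)}\,\mathbf{1}_{\{\phi(\pi)=\eta\}}
\]
is a genuine stochastic kernel supported on $\phi^{-1}(\eta)$, and the hypothesis on the initial law says exactly that $\pi(0)\mid\phi(\pi(0))\sim\Lambda(\phi(\pi(0)),\cdot)$. By the criterion it then suffices to exhibit a matrix $\widehat{G}^r$ on $\Pi^{\delta_{r+1}/\delta_r}$ for which the intertwining $\Lambda G^r=\widehat{G}^r\Lambda$ holds as an identity of operators on functions of the full array; such a $\widehat{G}^r$ is then necessarily unique, is automatically a conservative $Q$-matrix, and yields that $\phi(\pi(t))$ is Markov with generator $\widehat{G}^r$ and that the conditional law of $\pi(t)$ stays proportional to $\widetilde{W}_{\delta_{r+1},\delta_r}$ for all $t$.

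Summing the intertwining over a fibre already forces $\widehat{G}^r(\eta,\eta')=h(\eta)^{-1}\sum_{\pi\in\phi^{-1}(\eta)}\widetilde{W}_{\delta_{r+1},\delta_r}(\pi;q)\sum_{\pi'\in\phi^{-1}(\eta')}G^r(\pi,\pi')$, so the only question is whether this candidate also satisfies the finer, entry-by-entry form of the intertwining. Its shape is revealing: for $\eta=\eta'+e_{kl}$ with $(k,l)$ an anti-diagonal cell ($e_{kl}$ meaning ``increment the entry at $(k,l)$''), a short computation with $\binom{n+1}{m}_q=\binom{n}{m}_q(1-q^{n+1})/(1-q^{n+1-m})$ shows that the interior coordinates $\pi'_{k-1,l},\pi'_{k,l-1}$ appearing in $\widetilde{W}_{\delta_{r+1},\delta_r}(\pi'+e_{kl};q)/\widetilde{W}_{\delta_{r+1},\delta_r}(\pi';q)$ cancel against those in the jump rate of $G^r$ at $(k,l)$, leaving
\[
\widehat{G}^r(\eta,\eta')=\frac{h(\eta')}{h(\eta)}\,q^{\,\eta'_{k+1,l-1}-\eta'_{kl}-1}\bigl(1-q^{\,\eta'_{kl}+1}\bigr)^{2}.
\]
This exhibits $\widehat{G}^r$ as the Doob $h$-transform of the autonomous single-anti-diagonal dynamics obtained from $G^r$ by deleting the couplings between distinct anti-diagonals, which is the content of \eqref{doobtransform}. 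Identifying $h$ with the relevant $q$-deformed $\mathfrak{sl}_{r+1}$ Whittaker function and recording the transfer-matrix recursion it obeys---which rests on the factorization $\widetilde{W}_{\delta_{r+1},\delta_r}(\pi)=\widetilde{W}_{\delta_r,\delta_{r-1}}(\pi|_{\delta_r})\,\kappa\!\left(\pi|_{i+j=r},\pi|_{i+j=r+1}\right)$ for an explicit kernel $\kappa$ depending only on the two outermost anti-diagonals---is then bookkeeping with the $q$-Whittaker material developed in the earlier sections.

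The substance is the entry-by-entry identity: for every $\eta$ and every target $\pi'$ one must show that $\sum_{\pi\in\phi^{-1}(\eta)}\widetilde{W}_{\delta_{r+1},\delta_r}(\pi;q)\,G^r(\pi,\pi')$ equals $\widetilde{W}_{\delta_{r+1},\delta_r}(\pi';q)$ times a factor depending only on $\eta$ and $\phi(\pi')$. Since the $\pi$ contributing to a fixed $\pi'$ are $\pi'$ itself and the increments $\pi'+e_{kl}$, this is non-vacuous only when $\eta$ equals $\phi(\pi')$ or differs from it by an anti-diagonal increment; the latter case is the off-diagonal computation above, and the former---the bulk case---requires showing that the diagonal term $G^r(\pi',\pi')$ together with the interior increments, each weighted by $\widetilde{W}_{\delta_{r+1},\delta_r}(\pi'+e_{kl};q)/\widetilde{W}_{\delta_{r+1},\delta_r}(\pi';q)$ and by its jump rate, collapses onto $\widehat{G}^r(\phi(\pi'),\phi(\pi'))$, all dependence on the interior coordinates disappearing. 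I would carry this out by summing out the interior coordinates, organizing the calculation either by induction on $r$ via the factorization of $\widetilde{W}$ above or directly, layer by layer from the innermost corner; either way the work reduces to repeated use of the $q$-Chu--Vandermonde identity together with a telescoping of the quadratic exponents $\pi_{ij}(\pi_{ij}-\pi_{i+1,j-1})$, the base computation being a $q$-binomial identity for a single interior cell against its (at most three) neighbouring $q$-binomial factors.

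I expect this bulk identity to be the main obstacle, and it is genuinely more delicate than its $q\to1$ counterpart in \cite{neil} for two reasons. First, the jump rate of $G^r$ at a boundary cell $(k,r+1-k)$ really does involve the interior coordinates $\pi_{k-1,r+1-k}$ and $\pi_{k,r-k}$, so the effective boundary dynamics becomes a function of $\phi(\pi)$ only after averaging against $\Lambda$, and the cancellation of those terms against the interior increments has to be organized with care. Second, the factors $q^{\pi_{i+1,j-1}-\pi_{ij}}$ in the rates---the within-anti-diagonal interaction with no analogue at $q=1$---destroy the product structure that makes the analogous computation in \cite{neil} essentially automatic, so the telescoping and the pairing of terms require real work. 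Once the bulk identity is in hand, one checks that the conventions $\pi_{ij}=0$ for $i\le0$ or $j\le0$ and ``$0$ absorbing'' do not disturb these identities along the two legs of the staircase (this is routine), and the theorem follows from the Markov functions criterion.
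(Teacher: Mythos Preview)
Your overall architecture matches the paper's: reduce to the Rogers--Pitman intertwining $\Lambda G^r=\widehat G^r\Lambda$, then identify $\widehat G^r$ with the Doob transform $L^r$ of \eqref{doobtransform}. Your off-diagonal computation is correct and agrees with the paper (once one uses $h(n)=\prod_i(q)_{n_i}^2\,a_r(n;q)$, the $(1-q^{n_k})^2$ factor is absorbed and one recovers $q^{n_{k+1}-n_k}a_r(n-e_k)/a_r(n)$).

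The gap is in the diagonal case. You describe it as ``summing out the interior coordinates'' via $q$-Chu--Vandermonde, but no summation identity is involved: what is needed is the \emph{pointwise} statement that
\[
G^r(\pi',\pi')+\sum_{(k,l)\in\delta_r}\frac{\widetilde W(\pi'+e_{kl})}{\widetilde W(\pi')}\,G^r(\pi'+e_{kl},\pi')
\]
depends only on $\phi(\pi')$. This is a telescoping algebraic identity in the entries of $\pi'$, not a $q$-binomial sum, and $q$-Chu--Vandermonde will not produce it. The paper does not attack it directly either. Instead it works at the operator level: with $w_r$ factored into layer kernels $q_r(n,k)$, one computes the adjoint $(h^{r-1}_k)^*$ with respect to the weighted inner product $\langle f,g\rangle_{q,r-1}=\sum_k q^{\sum k_i(k_i-k_{i+1})}f(k)g(k)$ and proves the single-layer identity $h^r_n\,q_r(n,k)=(h^{r-1}_k)^*q_r(n,k)$, whose content is exactly Labelle's Lemma~5.1 (reproduced and generalized here as Lemma~4.1). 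This yields $h^r\circ\tilde q_r=\tilde q_r\circ(h^{r-1}+\tilde g^r)$, and iterating through the layers collects the $\tilde g^{r-i}$ pieces into $G^r$. The ``layer-by-layer'' organization you mention is therefore right, but the engine is this specific telescoping lemma (an identity among sums of $q$-exponentials, with no $q$-binomial character), together with the adjoint trick to pass the operator across the kernel; without naming that mechanism your diagonal step remains a plan rather than a proof.
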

The precise nature of the generator of $\pi(t)|_{\delta_{r+1}/\delta_r}$ is given in \eqref{doobtransform}. In brief though it is a Doob transform of a difference operator with respect to coefficients coming from the eigenfunctions of the quantum difference Toda Hamiltonian
\begin{equation}\label{ok16}
   \mathfrak{H}_q^r = \sum_{i=0}^r \bigg[\mathfrak{D}_{i+1}\mathfrak{D}_i^{-1}(1-y_i)-1\bigg]
\end{equation}
where $\mathfrak{D}_i$ is the operator which multiplies the $y_i$ variable by $q$. The specific difference operator whose Doob transform will be taken is the one coming from the corresponding Toda recursion of the eigenfunction coefficients. \newline
\indent Next, fix the Young diagrams $\lambda$ and $\mu \subset \lambda^{\circ}$ and generalize the generator $G^r$ to
\[\begin{split}
    G^{\lambda,\alpha}_{\mu}&= \sum_{(i,j) \in \lambda \backslash \mathrm{Vert}_{\mu}(\lambda)}z_{i,\ell(\lambda)}q^{\pi_{i+1,j-1}-\pi_{ij}}(1-q^{\pi_{ij}-\pi_{i,j-1}})(1-z_{i-1,i+j-1}q^{\pi_{ij}-\pi_{i-1,j}}) D_{\pi_{ij}} \\
    &+\sum_{(i,j) \in \mathrm{Vert}_{\mu}(\lambda)}z_{i,\ell(\lambda)}z_{i,i+j-1}^{-1}q^{\pi_{i,j-1}-\pi_{ij}}(1-q^{\pi_{ij}-\pi_{i,j-1}})(1-z_{i-1,i+j-1}q^{\pi_{ij}-\pi_{i-1,j}})D_{\pi_{ij}}
\end{split}\]
This generates a Markov process on $\Pi^{\lambda,\alpha}$, which for the purposes of this introduction we refer to as $(\pi^{\lambda}_{\mu}(t))_{t \geq 0}$ due to its dependence on $\lambda$ and $\mu$. \newline
\indent In \cite{neil} the author found a Markov process $\pi^{\lambda}(t)$ on $\Pi^{\lambda,\alpha}$ such that the projection $\pi^{\lambda}(t)|_{\lambda/\mu}$ to every skew shape $\lambda/\mu$, with $\mu \subset \lambda^{\circ}$, was still Markovian. We will prove a weaker result in the $q$-setting, which is that for every skew shape $\lambda/\mu$, with $\mu \subset \lambda^{\circ}$, there exists a Markov process $\pi_{\mu}^{\lambda}(t)$ on $\Pi^{\lambda,\alpha}$ such that its projection $\pi_{\mu}^{\lambda}(t)|_{\lambda/\mu}$ to $\lambda/\mu$ is still Markovian. The notable difference here is that for $\mu,\nu \subset \lambda^{\circ}$ it is possible for $(\pi^{\lambda}_{\mu}(t))_{t \geq 0}$ and $(\pi^{\lambda}_{\nu}(t))_{t \geq 0}$ to be different Markov processes. For instance, $\pi^{\lambda}_{\mu}(t)|_{\lambda/\nu}$ and $\pi^{\lambda}_{\nu}(t)|_{\lambda/\mu}$ might not be Markov processes even though we prove that $\pi^{\lambda}_{\mu}(t)|_{\lambda/\mu}$ and $\pi^{\lambda}_{\nu}(t)|_{\lambda/\nu}$ will always be Markov processes. (On the other hand, we note that $\pi^{\lambda}_{\delta_r}(t)$ and $\pi^{\lambda}_{\delta_s}(t)$ are the same process for all $\delta_r, \delta_s \subset \lambda^{\circ}$.) Of course, as $q \to 1$ the Markov processes $\pi^{\lambda}_{\mu}((1-q)^{-2}t)$ and $\pi^{\lambda}_{\nu}((1-q)^{-2}t)$ will both become the process $\pi^{\lambda}(t)$ from \cite{neil} for every $\mu,\nu \subset \lambda^{\circ}$. Our main result is the following,
\begin{theorem}\label{bigtheorem}
    For every skew shape $\lambda/\mu$ with $\mu \subset \lambda^{\circ}$ there exists a Markov process $\pi(t)$ on $\Pi^{\lambda,\alpha}$ such that if the following holds:
    \begin{itemize}
        \item $\pi$ evolves according to $G^{\lambda,\alpha}_{\mu}$,
        \item Initially, the conditional law of $\pi(t)$ is proportional to
    \[\begin{split} \widetilde{W}_{\lambda,\mu}(\pi,z;q) :=& q^{D_{\lambda,\mu}(\pi,z;q)} \prod_{(i,j) \in \mu} z_i^{\pi_{ij}}\binom{\pi_{ij}}{\pi_{i,j-1}}_q \frac{(q)_{\pi_{ij}}}{(q)_{\pi_{i-1,j}}(qz_{i-1,i+j-1})_{\pi_{ij}-\pi_{i-1,j}}} \\&\prod_{\substack{(i,j) \in \mu\\ (i,j+1) \in \lambda/\mu }} \binom{\pi_{i,j+1}}{\pi_{ij}}_q \prod_{\substack{(i,j) \in \mu \\ (i+1,j) \in \lambda/\mu}} \frac{(qz_{i,i+j})_{\pi_{i+1,j}}}{(q)_{\pi_{i,j}}(qz_{i,i+j})_{\pi_{i+1,j}-\pi_{ij}}}\end{split} \]
    with $D_{\lambda,\mu}=\displaystyle \sum_{(i,j) \in \mu} \pi_{ij}^2 - \sum_{\substack{(i,j) \in \mu \\ (i+1,j-1) \in \mu}} \pi_{ij}\pi_{i+1,j-1} - \sum_{\substack{(i,j) \in \lambda/\mu \\ (i+1,j-1) \in \mu}} \pi_{ij}\pi_{i+1,j-1}-\sum_{\substack{(i,j) \in \mu\\(i+1,j-1) \in \lambda/\mu}} \pi_{ij}\pi_{i+1,j-1}$.
    \end{itemize}
    Then the boundary values $\pi(t)|_{\lambda/\mu}$ will evolve as a Markov process on $\Pi^{\lambda/\mu, \alpha}$ with an explicit generator given in \eqref{anotherok}.
\end{theorem}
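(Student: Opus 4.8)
The plan is to deduce the theorem from a standard Markov functions criterion (in the spirit of Rogers and Pitman, or Kurtz's Markov mapping theorem) applied to the restriction map $\mathrm{proj}\colon\Pi^{\lambda,\alpha}\to\Pi^{\lambda/\mu,\alpha}$, $\pi\mapsto\pi|_{\lambda/\mu}$. The key object is the stochastic kernel $\Lambda=\Lambda_{\lambda,\mu}$ from $\Pi^{\lambda/\mu,\alpha}$ to $\Pi^{\lambda,\alpha}$ defined by
\[
\Lambda(b,\pi)=\frac{\widetilde W_{\lambda,\mu}(\pi,z;q)}{\Psi_{\lambda,\mu}(b)}\,\mathbbm{1}\{\pi|_{\lambda/\mu}=b\},\qquad \Psi_{\lambda,\mu}(b):=\sum_{\pi\,:\,\pi|_{\lambda/\mu}=b}\widetilde W_{\lambda,\mu}(\pi,z;q);
\]
it is carried by the fibres of $\mathrm{proj}$, and with respect to it the stated initial condition reads exactly $\pi(0)\sim\nu\Lambda$ with $\nu$ the law of $\pi(0)|_{\lambda/\mu}$. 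A first step is to check $\Psi_{\lambda,\mu}(b)<\infty$: with $b$ fixed, the exponent $D_{\lambda,\mu}$ is a coercive positive definite quadratic form in the interior entries $(\pi_{ij})_{(i,j)\in\mu}$ (morally $\tfrac12$ times a sum of squares of anti-diagonal increments, plus endpoint squares), whereas every $q$-binomial and $q$-Pochhammer ratio appearing in $\widetilde W_{\lambda,\mu}$ remains bounded for $q\in(0,1)$; hence the sum converges absolutely and $\Lambda$ is a genuine Markov kernel.

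The heart of the proof is the intertwining identity
\[
\Lambda\,G^{\lambda,\alpha}_{\mu}=\bar G\,\Lambda,
\]
where $\bar G$ is the generator appearing in \eqref{anotherok}. I would establish it by first recognising $\Psi_{\lambda,\mu}$ as a ($q$-deformed, skew) Whittaker function: the Givental-type branching structure of $\widetilde W_{\lambda,\mu}$ should force $\Psi_{\lambda,\mu}$ to satisfy, as a function of $b$, the Toda recursion attached to the eigenfunctions of the $q$-difference Toda Hamiltonian $\mathfrak{H}_q^r$ of \eqref{ok16} --- the same mechanism already visible in Theorem~\ref{smalltheorem}, of which the present theorem is essentially the specialisation $\lambda=\delta_{r+1}$, $\mu=\delta_r$, $z\equiv1$. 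The operator $\bar G$ of \eqref{anotherok} is then, by construction, the Doob $h$-transform of the difference operator encoding that recursion with respect to the harmonic function $\Psi_{\lambda,\mu}$, so the intertwining becomes the assertion that summing $\widetilde W_{\lambda,\mu}$ over the interior cells $\mu$ makes the ``interior'' part of $G^{\lambda,\alpha}_\mu$ integrate out, while its ``boundary'' part reproduces that difference operator. Concretely this should reduce, cell by cell, to one-variable $q$-hypergeometric summations of $q$-Chu--Vandermonde type, obtained by summing $\widetilde W_{\lambda,\mu}$ over a single interior entry $\pi_{ij}$, $(i,j)\in\mu$, with its neighbours held fixed.

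The step I expect to be the main obstacle is precisely this verification, and within it the bookkeeping at the cells of $\mathrm{Vert}_\mu(\lambda)$: the modified rates there --- carrying $q^{\pi_{i,j-1}-\pi_{ij}}$ in place of $q^{\pi_{i+1,j-1}-\pi_{ij}}$, together with the extra factor $z_{i,\ell(\lambda)}z_{i,i+j-1}^{-1}$ --- are exactly what is needed for the relevant local $q$-identity to close up when a cell abuts the barrier $\partial(\lambda/\mu)$, i.e.\ when one of its anti-diagonal neighbours lies in $\mu$ and is being summed rather than held fixed. To organise the computation I would induct on $|\mu|$, peeling the outer corner cells of $\mu$ one at a time in a reverse-plane-partition order, so that $\Lambda_{\lambda,\mu}$ factors into one-cell kernels each carrying a single $q$-Pochhammer weight and the intertwining factors into one-cell intertwinings, each a bare $q$-Chu--Vandermonde identity. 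The remaining ingredients needed to apply the Markov functions criterion are routine and rest on the same Gaussian-type decay of $\widetilde W_{\lambda,\mu}$: non-explosion of $\pi(t)$ under $G^{\lambda,\alpha}_\mu$ (immediate, as $\pi(t)$ is coordinatewise nonincreasing, stays in $\Pi^{\lambda,\alpha}$, and is absorbed at the zero array), well-posedness of the martingale problems for $G^{\lambda,\alpha}_\mu$ and for $\bar G$, and absolute convergence of $G^{\lambda,\alpha}_\mu$ applied under $\Lambda(b,\cdot)$. Granting the intertwining and these hypotheses, the criterion gives that $\pi(t)|_{\lambda/\mu}$ is Markov with the generator $\bar G$ of \eqref{anotherok}, started from $\nu$, which is the assertion of the theorem.
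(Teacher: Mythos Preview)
Your overall framework is correct and matches the paper: reduce to the Rogers--Pitman intertwining $\Lambda G^{\lambda,\alpha}_\mu=\bar G\,\Lambda$ for the kernel $\Lambda$ built from $\widetilde W_{\lambda,\mu}$, then invoke the Markov functions criterion. Where you diverge is in the mechanism for proving the intertwining, and this is where a gap opens.

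The paper does \emph{not} reduce the intertwining to any summation over interior entries. Its key observation (Lemma~\ref{genlemma1}) is that $\widetilde W_{\lambda,\mu}$ satisfies two \emph{pointwise} multiplicative relations: $\widetilde b_{ij}(\sigma)L_{\sigma_{ij}}\widetilde W_{\lambda,\mu}=\widetilde b_{ij}(\pi)\widetilde W_{\lambda,\mu}$ for boundary cells, and $R_{\pi_{ij}}[\widetilde b_{ij}\widetilde W_{\lambda,\mu}]=b'_{ij}\widetilde W_{\lambda,\mu}$ for interior cells. These collapse the intertwining to a single rate identity \eqref{ok31} holding for every fixed $\pi$, before any summation. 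That identity is then verified (Lemma~\ref{genlemma2}, Appendix~A) by decomposing $\hat\mu\setminus\mu$ into hook shapes and tracking an elaborate but elementary telescoping of $q$-exponents; no $q$-hypergeometric summation appears anywhere.

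Your proposed route---induction on $|\mu|$, factoring $\Lambda$ into one-cell kernels, each step a $q$-Chu--Vandermonde identity---faces real obstacles. First, $G^{\lambda,\alpha}_\mu$ depends on $\mu$ through $\mathrm{Vert}_\mu(\lambda)$; the introduction explicitly warns that $\pi^\lambda_\mu$ and $\pi^\lambda_\nu$ are in general different processes, so peeling a corner of $\mu$ changes the ambient generator and you would have to specify and control an entire tower of intermediate generators, not just intermediate kernels. Second, the quadratic exponent $D_{\lambda,\mu}$ and the boundary products in $\widetilde W_{\lambda,\mu}$ do not decouple cell by cell; the diagonal-by-diagonal factorisation of Section~3 relies on $\mu$ being a staircase and does not extend to single-cell peeling. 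Third, nothing in the rate structure points to $q$-Chu--Vandermonde; the operative identity is telescoping along anti-diagonals, in the spirit of Lemma~4.1. So while your high-level plan is sound, the concrete reduction you propose would likely not close, and you are missing the pointwise-harmonicity observation that makes the paper's direct argument go through.
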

The case of Theorem \ref{bigtheorem} with $\lambda=\delta_{r+1}, \mu=\delta_r,$ and arbitrary parameters $\alpha_1,\alpha_2,\dots$ is done in Section $4$ using a generalization of our proof of Theorem \ref{smalltheorem}. In Subsection $5.1$ we prove Theorem \ref{bigtheorem} for general $\lambda \supset \delta_{r+2},\mu=\delta_{r+1}$ and trivial parameters $\alpha_1=\alpha_2=\dots=0$. Finally, in Subsection $5.2$ we will give the full proof of Theorem \ref{bigtheorem} with one of the key lemmas being proven in Appendix $A$. Prior to this, we will give some background in Section $2$ on the quantum difference Toda Hamiltonian and previous developments.\newline
\indent We end this section with an outline of the proof in Appendix A. There are five rate functions \[\widetilde{b}_{v}(\pi),\widetilde{b}_v(\sigma),b'_v(\pi),\hat{b}_v(\pi), \hat{b}'_v(\pi)\] which will be present in Appendix A. The function $\widetilde{b}_v(\pi)$ will be the rates initially described in Section 1.1 regarding the Markov process considered in Theorem 1.2. We note that these rates $\widetilde{b}_v(\pi)$ are dependent on the boundary of the skew shape $\lambda/\mu$. As usual, $\widetilde{b}_v(\sigma)$ will be the same rates except for the array $\sigma=(\sigma_{ij})$ given by $\sigma_{ij}=\pi_{ij}$ if $(i,j) \in \lambda/\mu$ and $\sigma_{ij}=0$ if $(i,j) \not\in \lambda/\mu$. The function $b'_{v}(\pi)$ is the quotient $R_{\pi_{v}}[\widetilde{b}_{v}\widetilde{W}_{\lambda,\mu}]/\widetilde{W}_{\lambda/\mu}$ where $\widetilde{W}_{\lambda,\mu}$ is the weight defined in the statement of Theorem 1.2. Finally, $\hat{b}_v,\hat{b}'_v$ are the rates described in Section 1.1 in regards to the Markov process considered in Theorem 1.2 except with $\mu=\emptyset$ trivial. The core of the proof comes down to considering the differences $\widetilde{b}_w(\pi)-\widetilde{b}_w(\sigma)$ and $b'_w(\pi)-\widetilde{b}_w(\pi)$ along certain hook shapes. Specifically, one can embed $\mu$ into a staircase shape $\hat{\mu}$---which one can assume is large enough to contain $\lambda$ as well---and the skew shape $\hat{\mu}/\mu$ can be thought of as several layers of hook shapes stacked on top of each other. There are several nonequivalent ways to realize $\hat{\mu}/\mu$ as several hook shapes glued together, but by fixing certain conventions (which is what we spend the beginning of Appendix A doing) one can make this construction unique. One can then consider all the hook shapes from this construction of $\hat{\mu}/\mu$ which are adjacent to $\mu$, these are referred to as the hook shape $\hat{F}_{ij}$ where $(i,j)$ is its origin in $\hat{\mu}/\mu$. In many of our calculations we will not need to sum over the entire hook shape $\hat{F}_{ij}$, and it will sometimes suffice to sum over a smaller hook shape $F_{ij}$. This smaller hook shape $F_{ij}$ will sit on top of a hook shape $G_{ij}$ which is contained entirely in $\mu$. Similarly, there exists a larger hook $\hat{G}_{ij}$ in $\hat{\mu}$ which contains $G_{ij}$ and is adjacent to $\hat{F}_{ij}$, here the situation is the same in the sense that we will often only need to sum over $G_{ij}$. To prove Theorem 1.1 the only novel computations are summation of $\widetilde{b}_w(\pi)-\widetilde{b}_w(\sigma)$ over $w \in F_{ij}$ and summation of $b'_w(\pi)-\widetilde{b}_w(\pi)$ over $w \in G_{ij}$, with various boundary terms being included. The rest of the proof amounts to comparing these quantities to summations over $\hat{b}'_v(\pi)-\hat{b}_v(\pi)$ for $v$ indexing points in a staircase shape, see \eqref{final5} for more details on this. As one can see in \eqref{big1}, summing over the vertical direction of a hook shape cancels out almost entirely with the exception of certain boundary terms. On the other hand, \eqref{big2} shows that summing over the horizontal direction of a hook shape results in boundary terms alongside extra contributions coming from points in the horizontal leg of the hook. These extra contributions are exactly what determine the form of $V_{\lambda,\mu}(\sigma)$ in \eqref{final6} and its dependence on the set $\mathrm{Hor}_{\mu}(\lambda)$. The remainder of the proof deals with how hook shapes glue together to form further cancellations, dealing with boundary terms, and tracking certain over-counting.
\subsection{Notation} The set $\mathbb{N}$ denotes the nonnegative integers. For $a \in \mathbb{C}\backslash \{1\}$, the $q$-Pochhammer symbol $(a;q)_n$ is defined via
\[(a;q)_n = \prod_{k=0}^{n-1} (1-aq^k).\]
When $a=q$ we write
\[(q)_n := (q;q)_n = \prod_{k=1}^n (1-q^k).\]
Moreover, $\binom{n}{k}_q = \frac{(q)_n}{(q)_k(q)_{n-k}}$ are the $q$-binomial coefficients. We note the standard scaling limits
\[\limmy{q}{1}{\frac{(q^a;q)_n}{(1-q)^n}}=(a)_n, \text{ }\text{ }\text{ }\text{ }\text{ }\text{ }\text{ } \limmy{q}{1}{\binom{n}{k}_q}=\binom{n}{k}\]
where $(a)_n := (a+1)(a+2)\dots(a+n-1)$ is the Pochhammer symbol and $\binom{n}{k}$ are the standard binomial coefficients. For integers $a<b$ the notation $\llbracket a, b\rrbracket$ will denote the set $\{a,a+1,\dots,b-1,b\}$.
\subsection{Acknowledgements}
The author is grateful to Neil O'Connell for many useful discussions, suggestions, and valuable comments. The author thanks Brian Rider for insightful conversations, their support, and their valuable criticisms of the first versions of this paper.
\section{Background}
Gerasimov, Kharchev, Lebedev, and Oblezin proved in \cite{GKLO} that certain eigenfunctions of
\begin{equation}\label{ok15}
    \mathfrak{h}^r=-\frac{1}{2}\sum_{i=1}^{r+1} \frac{\partial^2}{\partial x_i^2}+\sum_{i=1}^r e^{x_i-x_{i+1}}
\end{equation}
have an integral representation, and this integral relation can be understood as an intertwining relation between the Hamiltonians \eqref{ok15} for varying $r \geq 1$. These intertwinings were interpreted probabilistically in \cite{neil2} through exponential functionals of Brownian motion. The eigenfunctions that were understood in \cite{GKLO} and \cite{neil2} are considered "class one" Whittaker functions. There are another class of eigenfunctions of \eqref{ok15} which are known as "fundamental" Whittaker functions. Ishii and Stade found a recursive formula for the series coefficients of fundamental Whittaker functions, which was used in \cite{neil} to give a probabilistic interpretation to the this class of Whittaker functions in terms of a Markov chain on reverse plane partitions. For both classes of eigenfunctions of \eqref{ok15}, these results were proven by building a Markov process on triangular arrays and considering the projection to its outermost row---which again was a Markov process.\newline
\indent Define the $q$-shift operator $\mathfrak{D}_if(y_1,\dots,y_i,\dots,y_r)=f(y_1,\dots,qy_i,\dots,y_r)$. Our goal is to extend the above story to the quantum difference Toda hamiltonian \eqref{ok16} which has been studied in \cite{etingof}, \cite{feigin}, and \cite{givental}. This is the quantum difference Toda Hamiltonian for type $A$, whose eigenfunctions were constructed by \cite{etingof} and \cite{feigin} using Whittaker vectors of Verma modules coming from the representation theory of quantum groups of the Lie algebra $\mathfrak{sl}_{r+1}$. To see the connection between \eqref{ok15} and \eqref{ok16}, set $q=e^{\hbar}$ and $y_i=e^{x_i-x_{i+1}}$, then Remark $(2)$ in the introduction of \cite{givental} states
\begin{equation}\label{ok21}
    \mathfrak{H}_q^r = \hbar \sum_{i=1}^{r+1} \frac{\partial}{\partial x_i} + \bigg[\frac{\hbar^2}{2}\sum_{i=1}^{r+1} \frac{\partial^2}{\partial x_i^2} - \sum_{i=1}^r e^{x_i-x_{i+1}}\bigg] + O(\hbar^3).
\end{equation}
In the paper \cite{GLO} and its sequels, Gerasimov, Lebedev, and Oblezin found a set of eigenfunctions for \eqref{ok16} which are considered a $q$-deformation of class one Whittaker functions. These eigenfunctions correspond to Markov processes on triangular arrays, with formulas for these processes and their further probabilistic applications being detailed in \cite{macdonald}.\newline
\indent The set of eigenfunctions we choose to study are a $q$-deformation of the fundamental Whittaker functions, and we use the framework laid out in \cite{neil} to do this. To explain, let
\[
    \phi(y_1,\dots,y_r;q)=\sum_{n \in \mathbb{N}^r} a_r(n;q) y^n
\]
denote a series solution to the eigenvalue equation $\mathfrak{H}_q \phi=0$. As was pointed out in Appendix A of \cite{feigin}, one can verify that the coefficients $a_r(n;q)$ must satisfy a difference equation
\begin{equation}\label{diffeq}
    \sum_{i=0}^r (q^{n_{i+1}-n_i}-1)a_r(n;q)=\sum_{i=1}^r q^{n_{i+1}-n_i}a_r(n-e_i;q).
\end{equation}
A reverse plane partition filling $\delta_{r+1}$ is a $3$d diagram $(\pi_{ij})_{i+j-1\leq r}$ satisfying $\pi_{ij}\leq \mathrm{min}\{\pi_{i+1,j},\pi_{i,j+1}\}$ with $\pi_{ij} \geq 0$. Denote the set of reverse plane partitions filling $\delta_{r+1}$ by $\Pi^r$. In the paper \cite{toda}, the author determined the coefficients $a_r(n;q)$ as sums over triangular arrays $\underline{m}=(m_{k,i})_{1 \leq i \leq k \leq r}$ of nonnegative integers with $m_{r,i}=n_i$ and $m_{k,i} \leq m_{k+1,i}, m_{k+1,i+1}$, but one can readily convert these triangular arrays into reverse plane partitions. In terms of reverse plane partitions, Labelle's explicit combinatorial formulas from \cite{toda} for the coefficients $a_r(n;q)$ can be written as
\begin{equation}\label{labelle}
    a_r(n;q) = \frac{1}{(q)_n^2}\sum_{\pi \in \Pi^r_n} q^{\displaystyle \sum_{k=1}^{r-1}\sum_{i=1}^k \pi_{ij}(\pi_{ij}-\pi_{i+1,j-1})} \prod_{1 \leq i+j-1\leq r} \binom{\pi_{i+1,j}}{\pi_{ij}}_q\binom{\pi_{i,j+1}}{\pi_{ij}}_q
\end{equation}
where $\Pi^r_n$ is the set of reverse plane partitions $(\pi_{ij})_{2 \leq i + j \leq r+1}$ for which $(\pi_{i,r-i+1})_{i=1}^r=n$ is true. In order to parallel the developments in \cite{neil}, it is convenient to write down the following equivalent form
\begin{equation}\label{ok9}
    a_r(n;q)=\sum_{\pi \in \Pi^r_n} q^{\displaystyle \sum_{k=1}^{r-1} \sum_{i=1}^k \pi_{ij}(\pi_{ij}-\pi_{i+1,j-1})} \prod_{1 \leq i+j-1\leq r} \frac{1}{(q)_{\pi_{ij}-\pi_{i-1,j}}(q)_{\pi_{ij}-\pi_{i,j-1}}}.
\end{equation}
This form is obtained by reorganizing the products of $q$-binomial coefficients in \eqref{labelle}, followed by dividing the numerators out completely starting with the initial $1/(q)_n^2$ factor.\newline
\indent The coefficients $a_r(n;q)$ satisfy a natural recursion, for $n \in \mathbb{N}^r$ and $k\in \mathbb{N}^{r-1}$ define
\[
    q_r(n,k)=\prod_{i=1}^r \frac{1}{(q)_{n_i-k_i}(q)_{n_i-k_{i-1}}}
\]
so that
\begin{equation}\label{recursion}
    a_r(n;q)= \sum_{k \leq n} q^{\displaystyle \sum_{i=1}^{r-1} k_i(k_i-k_{i+1})} q_r(n,k) a_{r-1}(k;q).
\end{equation}
We will make essential use of \eqref{recursion}.
\section{Intertwinings and Markov-Doob Integrability}
Denote by $L_k$ and $R_k$ the shift operators for functions $f$ on $\mathbb{N}$ via
\[
    (L_kf)(k)=\begin{cases}
        f(k-1), &\text{ }k >0\\
        0, &\text{ otherwise.}
    \end{cases}
\]
and $(R_kf)(k)=f(k+1)$. The difference equation \eqref{diffeq} may be written as $h^ra_r=0$ where
\[
    h^r = \sum_{i=0}^r \bigg[q^{n_{i+1}-n_i}L_{n_i} + (1-q^{n_{i+1}-n_i})\bigg].
\]
Moreover, we define
\[
    (q_rf)(n)=\sum_{k \leq n} q^{\displaystyle \sum_{i=1}^{r-1} k_i(k_i-k_{i+1})}q_r(n,k)f(k)
\]
for functions $f$ on $\mathbb{N}^{r-1}$.\newline
\indent Everything in this paper works because of an intertwining relation between the operators $h^r$ and the operator $q_r$ determining the recursion of the $a_r$ coefficients. This is also where one of the first differences with \cite{neil} appears, where our $q_r$ operator includes not only the $q$-deformation of the $\frac{1}{(\pi_{ij}-\pi_{i-1,j})!(\pi_{ij}-\pi_{i,j-1})!}$ factors but also an extra $q^{\sum_{i=1}^{r-1} k_i(k_i-k_{i+1})}$ factor which should disappear in the $q\to 1$ limit. This deviation affects the proof of our first intertwining, because computing the adjoint $(h^{r}_n)^*$ requires an extra step and we must furthermore consider this formal adjoint with respect to the inner product
    \begin{equation}\label{innerproduct}
        \langle f,g \rangle_{q,r} = \sum_{n \in \mathbb{N}^r} q^{\displaystyle \sum_{i=1}^r n_i(n_i-n_{i+1})}f(n)g(n).
    \end{equation}
\begin{theorem}
    The following intertwining relation holds:
    \[
        h^r \circ q_r = q_r \circ h^{r-1}.
    \]
\end{theorem}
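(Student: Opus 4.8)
The plan is to convert the operator identity into a single scalar identity between explicit $q$-Pochhammer expressions, using that $q_r$ is a kernel operator paired through the weighted inner product $\langle\cdot,\cdot\rangle_{q,r-1}$. Write $w(k):=q^{\sum_{i=1}^{r-1}k_i(k_i-k_{i+1})}$, so that $\langle f,g\rangle_{q,r-1}=\sum_{k}w(k)f(k)g(k)$; since $w$ is exactly the extra factor appearing in the definition of $q_r$, we have $(q_rf)(n)=\langle q_r(n,\cdot),f\rangle_{q,r-1}$, where $q_r(n,k)=\prod_{i=1}^r\big((q)_{n_i-k_i}(q)_{n_i-k_{i-1}}\big)^{-1}$ and we use the convention $1/(q)_m:=0$ for $m<0$ (so $q_r(n,k)$ is supported on interlacing pairs). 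As $h^r$ is a finite difference operator in the $n$-variable alone, it passes through the $k$-sum, giving $(h^r\circ q_rf)(n)=\langle h^r_n\,q_r(n,\cdot),f\rangle_{q,r-1}$. On the other hand $(q_r\circ h^{r-1}f)(n)=\langle q_r(n,\cdot),h^{r-1}f\rangle_{q,r-1}=\langle (h^{r-1})^{*}q_r(n,\cdot),f\rangle_{q,r-1}$, with $(h^{r-1})^{*}$ the formal adjoint with respect to $\langle\cdot,\cdot\rangle_{q,r-1}$ acting in the $k$-variable. Since $w$ never vanishes, testing against all finitely supported $f$ reduces the theorem to the pointwise identity
\[
h^r_n\,q_r(n,k)=(h^{r-1})^{*}_k\,q_r(n,k),\qquad n\in\mathbb{N}^r,\ k\in\mathbb{N}^{r-1}.
\]

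The first step is to compute $(h^{r-1})^{*}$, and this is the extra step where the presence of $w$ forces a deviation from \cite{neil}. The multiplications by $q^{k_{i+1}-k_i}$ and $1-q^{k_{i+1}-k_i}$ are self-adjoint for any diagonal inner product, but the backward shift $L_{k_i}$ is not: substituting $k\mapsto k+e_i$ in $\sum_k w(k)(L_{k_i}f)(k)g(k)$ and using the ratio $w(k+e_i)/w(k)=q^{\,2k_i+1-k_{i+1}-k_{i-1}}$ gives $L_{k_i}^{*}=q^{\,2k_i+1-k_{i+1}-k_{i-1}}R_{k_i}$, the backward shift conjugated by the weight. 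Composing with the prefactor $q^{k_{i+1}-k_i}$ that multiplies $L_{k_i}$ inside $h^{r-1}$, the exponents collapse and one obtains the clean form
\[
(h^{r-1})^{*}=\sum_{i=0}^{r-1}\Big[q^{\,k_i-k_{i-1}}R_{k_i}+\big(1-q^{k_{i+1}-k_i}\big)\Big],
\]
with the conventions $k_0=k_r=0$ (so the $i=0$ summand is just $1-q^{k_1}$).

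The second step is the pointwise identity itself. From $(q)_m/(q)_{m-1}=1-q^m$ one reads off the two elementary ratios
\[
\frac{q_r(n-e_i,k)}{q_r(n,k)}=(1-q^{n_i-k_i})(1-q^{n_i-k_{i-1}}),\qquad
\frac{q_r(n,k+e_i)}{q_r(n,k)}=(1-q^{n_i-k_i})(1-q^{n_{i+1}-k_i}),
\]
so dividing the desired identity through by $q_r(n,k)$ turns it into a purely algebraic identity in $q^{n_1},\dots,q^{n_r}$ and $q^{k_1},\dots,q^{k_{r-1}}$. To verify it I would expand every product via $(1-q^a)(1-q^b)-1=-q^a-q^b+q^{a+b}$, so that both sides become short Laurent polynomials; the nearest-neighbour difference terms $q^{k_i-k_{i-1}}-q^{k_{i+1}-k_i}$ on the right telescope, and after reindexing the surviving single-exponential sums the two sides match term by term, the boundary conventions $n_0=n_{r+1}=k_0=k_r=0$ making the endpoint contributions cancel.

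I expect the main obstacle to be conceptual rather than computational, namely getting $(h^{r-1})^{*}$ exactly right: one must notice that the nontrivial weight $w$ turns the naive transpose $L_{k_i}\mapsto R_{k_i}$ into a \emph{weighted} shift, and that the tidy exponent $q^{\,k_i-k_{i-1}}$ emerges only after recombining with the $q^{k_{i+1}-k_i}$ prefactor; an error here would propagate invisibly into the scalar identity. Once the two Pochhammer ratios above are in hand the remaining verification is mechanical, the only real care being the bookkeeping of the $i=0$ and $i=r$ boundary terms on each side.
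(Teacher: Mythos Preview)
Your proposal is correct and follows essentially the same route as the paper: reduce the operator identity to the pointwise equality $h^r_n\,q_r(n,k)=(h^{r-1}_k)^*\,q_r(n,k)$ via the weighted pairing, compute the adjoint $(h^{r-1})^*$ with respect to $\langle\cdot,\cdot\rangle_{q,r-1}$ (obtaining exactly the form $\sum_i[q^{k_i-k_{i-1}}R_{k_i}+(1-q^{k_{i+1}-k_i})]$ that the paper records), and then verify the resulting scalar identity. The only cosmetic difference is that the paper invokes Lemma~5.1 of \cite{toda} for that last scalar step, whereas you propose to expand and telescope directly; the paper itself carries out exactly such an expansion in its Lemma~4.1 for the drifted version, so your plan is the same computation specialised to $c_i\equiv 0$.
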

\begin{proof}
    First, note that \[R_{n_i} q^{\displaystyle \sum_{j=1}^r n_j(n_j-n_{j+1})}=q^{\displaystyle \sum_{j=1}^r n_j(n_j-n_{j+1})} q^{2n_i-n_{i+1}-n_{i-1}+1},\]
    and so:
    \[
        \begin{split}
            \langle q^{n_{i+1}-n_i}L_{n_i}f,g\rangle_{q,r} &= \sum_{n \in \mathbb{N}^r} q^{\displaystyle \sum_{j=1}^r n_j(n_j-n_{j+1})} q^{n_{i+1}-n_i}(L_{n_i}f)(n)g(n)\\ &= \sum_{n \in L_{n_i}\mathbb{N}^r} q^{\displaystyle \sum_{j=1}^r n_j(n_j-n_{j+1})} q^{2n_i-n_{i+1}-n_i+1} q^{n_{i+1}-(n_i+1)}f(n) (R_{n_i}g)(n) \\
            &= \sum_{n \in \mathbb{N}^r} q^{\displaystyle \sum_{j=1}^r n_j(n_j-n_{j+1})} q^{n_i-n_{i-1}} f(n) (R_{n_i}g)(n).
        \end{split}
    \]
    This gives us that
    \[
        (h^r_n)^* = \sum_{i=0}^r (1-q^{n_{i+1}-n_i})+\sum_{i=1}^rq^{n_i-n_{i-1}}R_{n_i},
    \]
    and one can readily confirm that
    \[
        h^r_n q_r(n,k)=-q_r(n,k)\sum_{i=0}^r \bigg[ (q^{n_{i+1}-n_i}-1)-q^{n_{i+1}-n_i}(1-q^{n_i-k_i})(1-q^{n_i-k_{i-1}}) \bigg],
    \]
    \[
        (h^{r-1}_k)^* q_r(n,k)=-q_r(n,k)\sum_{i=0}^{r-1} \bigg[ (q^{k_{i+1}-k_i}-1)-q^{k_i-k_{i-1}}(1-q^{n_i-k_i})(1-q^{n_{i+1}-k_i})\bigg].
    \]
    At this point, Lemma $5.1$ of \cite{toda} allows us to state the equality
    \[
        h^r_n q_r(n,k)=(h^{r-1}_k)^* q_r(n,k).
    \]
    To complete the proof observe that
    \[
        \begin{split}
            [(h^r \circ q_r)f](n) = \langle h_n^r q_r(n,\cdot),f\rangle_{q,r-1} &= \langle (h^{r-1})^* q_r(n,\cdot),f\rangle_{q,r-1} \\
            &= \langle q_r(n,\cdot),h^{r-1}f\rangle_{q,r-1}=[(q_r \circ h^{r-1})f](n).
        \end{split}
    \]
\end{proof}
Next introduce the Doob transform
\begin{equation}\label{doobtransform}
    L^r = a_r(n;q)^{-1} \circ h^r \circ a_r(n;q) = \sum_{i=1}^r q^{n_{i+1}-n_i} \frac{(L_{n_i}a_r)(n;q)}{a_r(n;q)} D_{n_i}f,
\end{equation}
where the rightmost equality follows from $h^r a_r=0$. Define
\begin{equation}\label{ok6}
    w_r(\pi;q)= \prod_{1 \leq i +j -1 \leq r}\frac{1}{(q)_{\pi_{ij}-\pi_{i-1,j}}(q)_{\pi_{ij}-\pi_{i,j-1}}}
\end{equation}
and observe that the recursion $w_r(\pi;q)=q_r(n,k)w_{r-1}(\pi|_{\delta_{r}};q)$ holds where $\delta_r=(r-1,r-2,\dots,2,1)$ is a staircase shape given by removing the outermost boxes in $\delta_{r+1}$. The measure on $\Pi^r_n$ given by
\[
    K^r_n(\pi;q)=q^{\displaystyle \sum_{i=1}^{r-1}\sum_{j=1}^i \pi_{ij}(\pi_{ij}-\pi_{i+1,j-1})}\frac{w_r(\pi;q)}{a_r(n)}.
\]
is a probability measure.\newline
\indent Recall the generator 
    \[
        G^r = \sum_{1 \leq i +j -1 \leq r} q^{\pi_{i+1,j-1}-\pi_{ij}} b_{ij}(\pi;q) D_{\pi_{ij}}
    \]
from \eqref{introthingy1} in the Introduction, where $b_{ij}(\pi;q)=(1-q^{\pi_{ij}-\pi_{i-1,j}})(1-q^{\pi_{ij}-\pi_{i,j-1}})$. Let $\pi(t), t\geq 0$ be the Markov process on $\Pi^r$ with generator $G^r$.\newline
\indent Let
\begin{equation}
    (\Lambda F)(n) = \sum_{\pi \in \Pi^r_n} K^r_n(\pi;q) F(\pi)
\end{equation}
be a Markov operator defined for $F: \Pi^r_n \to \mathbb{R}$, we want to prove the intertwining 
\[
    L^r \Lambda = \Lambda G^r.
\]
In the interest of proving such an intertwining, we define an operator $\tilde{q}_r$ on $\mathbb{N}^r \times \mathbb{N}^{r-1}$ via
\[
    (\tilde{q}_rf)(n)=\sum_{k \leq n} q^{\displaystyle \sum_{i=1}^{r-1} k_i(k_i-k_{i+1})}q_r(n,k) f(n,k)
\]
and leave it to the reader to confirm that for any $F :\Pi^r_n \to \mathbb{R}$ we can write \begin{equation} \sum_{\pi \in \Pi^r_n}q^{\displaystyle \sum_{(i,j) \in \delta_{r}} \pi_{ij}(\pi_{ij}-\pi_{i+1,j-1})}w_r(\pi;q)F(\pi)=\tilde{q}_r \tilde{q}_{r-1} \dots \tilde{q}_1 F_0\end{equation} for some $F_0 : \mathbb{N}^r \times \mathbb{N}^{r-1} \times \dots \times \mathbb{N} \to \mathbb{R}$. Moreover, if for $n^{(j)} \in \mathbb{N}^{r-j+1}, j=1,\dots,i$ satisfying $n^{(1)} \geq \dots \geq n^{(i)}$ we let $\Pi^r_{n^{(1)},\dots,n^{(i)}}$ denote the set of $\pi \in \Pi^r_{n^{(1)}}$ with $(\pi_{jk})_{j + k + i-2 = r} = n^{(i)}$, then it can also be verified that
\[
    (\tilde{q}_{r-i+1}\dots \tilde{q}_1 F_0)(n^{(1)},\dots,n^{(i)}) = \sum_{\pi \in \Pi^r_{n^{(1)},\dots,n^{(i)}}} q^{\displaystyle \sum_{(i,j) \in \delta_{r-i+1}} \pi_{ij}(\pi_{ij}-\pi_{i+1,j-1})}w_{r-i+1}(\pi|_{\delta_{r-i+2}};q)F(\pi).
\]
\begin{theorem}\label{markovfunction}
    The following intertwining relation holds:
    \begin{equation}\label{bigintertwining}
        L^r \Lambda = \Lambda G^r.
    \end{equation}
    As a result of this, if we let $\pi(t),t \geq 0$ be a Markov process on $\Pi^r$ with generator $G^r$ and initial law $K^r_n$ for some $n \in \mathbb{N}$. Then the projection to the outermost diagonal $N(t)=(\pi_{1,r}(t),\dots,\pi_{r,1}(t))$ is a Markov process on $\mathbb{N}^r$ with generator $L^r$ and, for all $t > 0$, the conditional law of $\pi(t)$ given $\{N(s), s \leq t\}$ is $K^r_{N(t)}$.
\end{theorem}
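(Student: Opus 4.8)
The plan is to prove the operator identity \eqref{bigintertwining} by induction on $r$, and then to read off the statement about $N(t)$ from the classical Markov-function criterion of Rogers--Pitman (cf.\ the analogous argument in \cite{neil}). For the induction the key is to decompose $G^r$ diagonal-by-diagonal. Writing $\pi\in\Pi^r$ as $\pi=(n,\pi')$ with $n=N(\pi)=(\pi_{1,r},\dots,\pi_{r,1})$ the outermost diagonal and $\pi'=\pi|_{\delta_r}\in\Pi^{r-1}$, and letting $k$ be the second diagonal, an inspection of the rates shows that the only terms of $G^r$ which shift an entry of the outermost diagonal are those at positions $(i,j)$ with $i+j-1=r$, and these assemble into $B_r:=\sum_{i=1}^r\rho_i(n,k)D_{n_i}$ with $\rho_i(n,k)=q^{n_{i+1}-n_i}(1-q^{n_i-k_i})(1-q^{n_i-k_{i-1}})$; the remaining terms are literally the level-$(r-1)$ generator $G^{r-1}$ acting on $\pi'$ (for $(i,j)\in\delta_r$ all of $\pi_{ij},\pi_{i-1,j},\pi_{i,j-1},\pi_{i+1,j-1}$ sit on diagonals $\le r-1$). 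So $G^r=B_r+G^{r-1}$, and one records the ``boundary-vanishing'' facts $\rho_i(n,k)=0$ when $n_i\in\{k_i,k_{i-1}\}$ and that $G^{r-1}$ preserves $\Pi^r$ when acting on $\pi'$, which make the splitting legitimate. Combining the recursion $w_r(\pi;q)=q_r(n,k)\,w_{r-1}(\pi';q)$, the splitting of $\sum_{(i,j)\in\delta_r}\pi_{ij}(\pi_{ij}-\pi_{i+1,j-1})$ into its top-diagonal part $\sum_i k_i(k_i-k_{i+1})$ plus the $\delta_{r-1}$-part, and the recursion \eqref{recursion} for $a_r$, one obtains the factorization
\[(a_r\Lambda F)(n)=\sum_{k\le n}q^{\sum_{i=1}^{r-1}k_i(k_i-k_{i+1})}q_r(n,k)\,\big(a_{r-1}\Lambda^{(r-1)}F_n\big)(k),\qquad F_n:=F((n,\cdot)),\]
where $\Lambda^{(r-1)},a_{r-1},G^{r-1},L^{r-1}$ denote the level-$(r-1)$ analogues.

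The computational heart of the argument is a product rule for $h^r$: using $q_r(n-e_i,k)/q_r(n,k)=(1-q^{n_i-k_i})(1-q^{n_i-k_{i-1}})$ (with the convention $1/(q)_m=0$ for $m<0$, so that the boundary-vanishing is automatic), a short manipulation of $h^r=\sum_i[q^{n_{i+1}-n_i}L_{n_i}+(1-q^{n_{i+1}-n_i})]$ gives, for any function $\phi$ of $n$,
\[h^r_n\big(q_r(n,k)\,\phi(n)\big)=q_r(n,k)\,(B_r\phi)(n)+\big(h^r_nq_r(n,k)\big)\,\phi(n).\]
This is precisely the step that forces the shape of $G^r$ (the prefactor $q^{\pi_{i+1,j-1}-\pi_{ij}}$ together with $b_{ij}(\pi;q)$): the operator $B_r$ that drops out of $h^r$ is exactly the outermost-diagonal block of $G^r$.

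For the induction itself, the base case $r=1$ is immediate: $\Pi^1_n$ is a single point, $\Lambda=\mathrm{id}$, and $G^1=q^{-n_1}(1-q^{n_1})^2D_{n_1}=L^1$. For the inductive step, apply $h^r$ to $a_r\Lambda F$ using the factorization above; by linearity $h^r_n$ acts termwise in the sum over $k$, and the product rule produces two contributions. The first, $\sum_k q^{\sum k_i(k_i-k_{i+1})}q_r(n,k)\,B_r\big(a_{r-1}\Lambda^{(r-1)}F_n\big)(k)$, reassembles (since $B_r$ only shifts $n$ and so commutes past the $\pi'$-sum) into $a_r\Lambda(B_rF)$. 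In the second, $\sum_k q^{\sum k_i(k_i-k_{i+1})}\big((h^{r-1}_k)^*q_r(n,k)\big)\big(a_{r-1}\Lambda^{(r-1)}F_n\big)(k)$ — using the pointwise identity $h^r_nq_r(n,k)=(h^{r-1}_k)^*q_r(n,k)$ obtained in the proof of the intertwining $h^r\circ q_r=q_r\circ h^{r-1}$ — one moves the adjoint across the pairing $\langle\cdot,\cdot\rangle_{q,r-1}$ of \eqref{innerproduct} (whose weight is exactly the $q^{\sum_{i=1}^{r-1}k_i(k_i-k_{i+1})}$ already present), arriving at $\sum_k q^{\sum k_i(k_i-k_{i+1})}q_r(n,k)\,h^{r-1}_k\big(a_{r-1}\Lambda^{(r-1)}F_n\big)(k)$. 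Now $h^{r-1}(a_{r-1}\Lambda^{(r-1)}F_n)=a_{r-1}L^{r-1}\Lambda^{(r-1)}F_n=a_{r-1}\Lambda^{(r-1)}(G^{r-1}F_n)$ by the inductive hypothesis (and $L^{r-1}=a_{r-1}^{-1}\circ h^{r-1}\circ a_{r-1}$), while $G^{r-1}F_n=\big((G^r-B_r)F\big)_n$; reassembling gives $a_r\Lambda\big((G^r-B_r)F\big)$. Adding the two contributions yields $h^r(a_r\Lambda F)=a_r\Lambda(G^rF)$, i.e.\ $L^r\Lambda=\Lambda G^r$ after dividing by $a_r$ and invoking $L^r=a_r^{-1}\circ h^r\circ a_r$ as in \eqref{doobtransform}.

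Finally, since $K^r_n$ is a probability measure on $\Pi^r_n=N^{-1}(n)$, the kernel $\Lambda$ is a genuine transition kernel from $\mathbb N^r$ to $\Pi^r$ compatible with the deterministic map $N$, so the intertwining $L^r\Lambda=\Lambda G^r$ together with the initial condition $\pi(0)\sim K^r_n=\Lambda(n,\cdot)$ places us exactly in the hypotheses of the Rogers--Pitman criterion; this yields that $N(t)$ is Markov with generator $L^r$ and that the conditional law of $\pi(t)$ given $\{N(s):s\le t\}$ equals $K^r_{N(t)}$. I expect the main obstacle to be the bookkeeping in the inductive step rather than any one computation: checking that $G^r-B_r$ is literally $G^{r-1}$ on $\Pi^r$ and that neither $B_r$ nor $G^{r-1}$ ever produces an invalid reverse plane partition (handled by the boundary-vanishing of the rates), and verifying that the weight $q^{\sum_{i=1}^{r-1}k_i(k_i-k_{i+1})}$ thrown up by the recursions is exactly the weight defining the adjoint $(h^{r-1})^*$; the product rule for $h^r$ also requires care with signs and $q$-factors, as it is what pins down the form of $G^r$.
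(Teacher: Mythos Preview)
Your proof is correct and is essentially the paper's own argument, just repackaged as an induction on $r$ rather than an explicit unrolling: the paper writes $a_r\Lambda F=\tilde q_r\cdots\tilde q_1F_0$ and iterates the identity $h^r\circ\tilde q_r=\tilde q_r\circ(h^{r-1}+\tilde g^r)$ (your product rule plus the adjoint step), peeling off one $\tilde g^{r-i+1}$ at each stage, which is exactly your inductive step with $B_r=\tilde g^r$. The ingredients---the splitting $G^r=B_r+G^{r-1}$, the product rule $h^r_n(q_r\phi)=q_r\,B_r\phi+(h^r_nq_r)\phi$, the pointwise identity $h^r_nq_r=(h^{r-1}_k)^*q_r$, and the Rogers--Pitman criterion---are identical.
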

\begin{proof}
    If we set $F_{r-i}=\tilde{q}_{r-i}\dots \tilde{q}_1 F_0$ then it can be readily observed that $
        a_r(n;q)\Lambda G^r F = \sum_{i=1}^r \tilde{q}_r \dots \tilde{q}_{r-i+1} \tilde{g}^{r-i+1}F_{r-i}$ where $\tilde{g}^r = \sum_{i+j-1=r} q^{n_{i+1}-n_i}(1-q^{n_i-k_i})(1-q^{n_i-k_{i-1}})D_{n_i}$. Thus we follow \cite{neil} and define $g^r = h^{r-1}_k + \tilde{g}^r,$
    noting that
    \[
        \begin{split}
            [h^r (\tilde{q}_r f)] (n) &= \sum_{k \leq n} q^{\displaystyle \sum_{i=1}^{r-1} k_i(k_i-k_{i+1})}h^r_n [q_r(n,k)f(n,k)] \\
            &= \sum_{k \leq n} q^{\displaystyle \sum_{i=1}^{r-1} k_i(k_i-k_{i+1})} \bigg[ (h^r_nq_r(n,k))f(n,k)+\sum_{i=1}^r q^{n_{i+1}-n_i}L_{n_i}q_r(n,k) D_{n_i}f(n,k)\bigg] \\
            &= \sum_{k \leq n} q^{\displaystyle \sum_{i=1}^{r-1} k_i(k_i-k_{i+1})} \bigg[q_r(n,k)h^{r-1}_k f(n,k) + \sum_{i=1}^r q^{n_{i+1}-n_i}L_{n_i}q_r(n,k)D_{n_i}f(n,k)\bigg] \\
            &=\sum_{k \leq n} q^{\displaystyle \sum_{i=1}^{r-1} k_i(k_i-k_{i+1})}q_r(n,k)g^r f \\
            &= [\tilde{q}_r(g^r f)](n)
        \end{split}
    \]
    because $\frac{L_{n_i}q_{r}(n,k)}{q_r(n,k)}=(1-q^{n_i-k_i})(1-q^{n_i-k_{i-1}})$. One can see that the relation $L^r \Lambda = \Lambda G^r$ is equivalent to
    \[
        h^r \sum_{\pi \in \Pi^r_n} q^{\displaystyle \sum_{k=1}^{r-1}\sum_{i=1}^k \pi_{ij}(\pi_{ij}-\pi_{i+1,j-1})}w_r(\pi;q)F(\pi)=\sum_{\pi \in \Pi^r_n} q^{\displaystyle \sum_{k=1}^{r-1}\sum_{i=1}^k \pi_{ij}(\pi_{ij}-\pi_{i+1,j-1})} w_r(\pi;q)(G^rF)(\pi).
    \]
    Starting from the left-hand-side we have
    \[
        \begin{split}
            h^r \sum_{\pi \in \Pi^r_n} q^{\displaystyle \sum_{k=1}^{r-1}\sum_{i=1}^k \pi_{ij}(\pi_{ij}-\pi_{i+1,j-1})}w_r(\pi;q)F(\pi) &= h^r \tilde{q}_r \tilde{q}_{r-1} \dots \tilde{q}_1 F_0 \\
            &= \tilde{q}_r g^r \tilde{q}_{r-1} \dots \tilde{q}_1 F_0 \\
            &= \tilde{q}_r h^{r-1} \tilde{q}_{r-1} \dots \tilde{q}_1 F_0 + \tilde{q}_r \tilde{g}^r \tilde{q}_{r-1} \dots \tilde{q}_1 F_0, 
        \end{split}
    \]
    iterating again gives
    \begin{equation}\begin{split}
        \tilde{q}_r h^{r-1} \tilde{q}_{r-1} \dots \tilde{q}_1 F_0 + \tilde{q}_r \tilde{g}^r \tilde{q}_{r-1} \dots \tilde{q}_1 F_0 &= \tilde{q}_r \tilde{q}_{r-1} g^{r-1}\tilde{q}_{r-2} \dots \tilde{q}_1F_0 + \tilde{q}_r \tilde{g}^r \tilde{q}_{r-1} \dots \tilde{q}_1 F_0 \\
            &= \tilde{q}_r \tilde{q}_{r-1} (h^{r-2} + \tilde{g}^{r-1} )\tilde{q}_{r-2} \dots \tilde{q}_1F_0 + \tilde{q}_r \tilde{g}^r \tilde{q}_{r-1} \dots \tilde{q}_1 F_0 \\
            &=\tilde{q}_r\tilde{q}_{r-1}h^{r-2}\tilde{q}_{r-2} \dots \tilde{q}_1F_0+ \tilde{q}_r\tilde{q}_{r-1}\tilde{g}^{r-1}\tilde{q}_{r-2}\dots\tilde{q}_1F_0+\tilde{q}_r\tilde{g}^r\tilde{q}_{r-1}\dots\tilde{q}_1F_0.
    \end{split}\end{equation}
    Iterating further will give
    \begin{equation}\begin{split}
        h^r \sum_{\pi \in \Pi^r_n} q^{\displaystyle \sum_{k=1}^{r-1}\sum_{i=1}^k \pi_{ij}(\pi_{ij}-\pi_{i+1,j-1})}w_r(\pi;q)F(\pi) &=\tilde{q}_r\dots\tilde{q}_1 h^0 F_0 + \sum_{i=1}^r\tilde{q}_r\dots \tilde{q}_{r-i+1}\tilde{g}^{r-i+1}F_{r-i}\\
            &= \sum_{\pi \in \Pi^r_n} q^{\displaystyle \sum_{k=1}^{r-1}\sum_{i=1}^k \pi_{ij}(\pi_{ij}-\pi_{i+1,j-1})} w_r(\pi;q)(G^rF)(\pi)
    \end{split}\end{equation}
    and thus \eqref{bigintertwining} has been proven. The rest of the statement of Theorem $2.2$ holds by applying Equation $(1)$ in \cite{pitman}.
\end{proof}
\section{General Parameters}
We now consider a fixed sequence $\alpha_1,\alpha_2,\dots$ of nonnegative integers which are independent of $r$. Let $\alpha_{ij}=\alpha_i+\alpha_{i+1}+\dots+\alpha_j$ for $i \leq j$. Moreover, let $\beta_{ij}=\alpha_i+\dots+\alpha_{i+j-1}$ so that $z_{i-1,i+j-1}=q^{\beta_{ij}}$. We are now considering the operator
\[
    \mathfrak{H}_q^{\alpha} = \sum_{i=0}^r \bigg[\mathfrak{D}_{i+1}\mathfrak{D}_i^{-1}(z_{i,r}(1-y_i))-z_{i,r}\bigg]
\]
where $z_{i,j}=\prod_{k=i+1}^j z_k$ and $z_k=q^{\alpha_k}$ for $k=1,\dots,r$. In order for
\[
    \mathfrak{H}^{\alpha}_q \phi= 0, \text{ }\text{ }\text{ }\text{ }\text{ }\text{ }\text{ }\text{ }\text{ }\text{ } \phi(y,z;q)=\sum_{n \in \mathbb{N}^r} a_{r}(n,z;q) y^n
\]
to hold, the difference equation
\begin{equation}\label{ok5}
    \sum_{i=0}^r (q^{n_{i+1}-n_i}-1)z_{i,r}a_{r}(n,z;q) = \sum_{i=1}^r q^{n_{i+1}-n_i}z_{i,r}a_r(n-e_i,z;q) 
\end{equation}
must be satisfied. This motivates us to set
\[
    h^{r,\alpha}_n = \sum_{i=0}^r \bigg[ q^{n_{i+1}-n_i}z_{i,r}L_{n_i}+(1-q^{n_{i+1}-n_i})z_{i,r}\bigg].
\]
Before we record the form of $a_r(n,z)$, it is important to relax our notion of reverse plane partitions in order to take into account the drifts created by the $\alpha_1,\alpha_2,\dots$ parameters. Specifically, we now consider the set $\Pi^{r,\alpha}$ of non-negative integer arrays $(\pi_{ij})_{(i,j) \in \delta_{r+1}}$ with
\[
    \pi_{ij} \geq \mathrm{max}\{\pi_{i,j-1},\pi_{i-1,j} - \beta_{ij}\}.
\]
As usual, $\Pi^{r,\alpha}_n$ will denote elements $\pi \in \Pi^{r,\alpha}$ with $\pi_{i, r-i+1}=n_i$. \newline
\indent Now let
\begin{equation}\label{ok4}
    a_{r}(n,z;q) = \sum_{\pi \in \Pi^{r,\alpha}_n} q^{\displaystyle \sum_{(i,j) \in \delta_r} \pi_{ij}(\pi_{ij}-\pi_{i+1,j-1})} \prod_{(i,j) \in \delta_r} z_i^{\pi_{ij}}\prod_{(i,j) \in \delta_{r+1}} \frac{1}{(q)_{\pi_{ij}-\pi_{i,j-1}}(qz_{i-1,i+j-1})_{\pi_{ij}-\pi_{i-1,j}}}
\end{equation}
with
\[
    q_{r,\alpha}(n,k) = \prod_{i=1}^r \frac{1}{(q)_{n_i-k_i}(qz_{i-1,r})_{n_i-k_{i-1}}}.
\]
In order to confirm that \eqref{ok4} is a solution to \eqref{ok5}, we present a modified version of \cite{toda}'s Lemma 5.1:
\begin{lemma}
    Let $(a_i)_{i \in \mathbb{Z}}$, $(b_i)_{i \in \mathbb{Z}}$, $(c_i)_{i \in \mathbb{Z}}$ be sequences of integers, finitely many of which are nonzero. Then we have the identity
    \begin{equation}\begin{split}
        &\sum_{i} [(q^{a_{i+1}-a_i}-1)q^{c_i}-q^{a_{i+1}-a_i}q^{c_i}(1-q^{a_i-b_i})(1-q^{a_i-b_{i-1}}q^{c_{i-1}})]  \\
        &=\sum_{i} [(q^{b_{i+1}-b_i}-1)q^{c_i}-q^{b_i-b_{i-1}}q^{c_{i-1}}(1-q^{a_i-b_i})(1-q^{a_{i+1}-b_i}q^{c_i})]
    \end{split}
    \end{equation}
\end{lemma}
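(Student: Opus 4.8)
The identity is a telescoping/rearrangement statement about the two sums, so the natural approach is to subtract one side from the other and show the difference collapses to zero after re-indexing. First I would expand both bracketed expressions fully into monomials in $q$. On the left one gets terms $q^{a_{i+1}-a_i+c_i}$, $-q^{c_i}$, and $-q^{a_{i+1}-a_i+c_i}(1-q^{a_i-b_i})(1-q^{a_i-b_{i-1}+c_{i-1}})$; on the right one gets $q^{b_{i+1}-b_i+c_i}$, $-q^{c_i}$, and $-q^{b_i-b_{i-1}+c_{i-1}}(1-q^{a_i-b_i})(1-q^{a_{i+1}-b_i+c_i})$. The $-q^{c_i}$ terms cancel immediately between the two sides, so it suffices to prove
\[
\sum_i\Big[q^{a_{i+1}-a_i+c_i}-q^{a_{i+1}-a_i+c_i}(1-q^{a_i-b_i})(1-q^{a_i-b_{i-1}+c_{i-1}})\Big]
=\sum_i\Big[q^{b_{i+1}-b_i+c_i}-q^{b_i-b_{i-1}+c_{i-1}}(1-q^{a_i-b_i})(1-q^{a_{i+1}-b_i+c_i})\Big].
\]

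\textbf{Key reduction.} I would expand the products $(1-q^{a_i-b_i})(1-\cdots)$ on each side and sort the resulting monomials by ``type.'' After expansion, the left summand becomes
\[
q^{a_{i+1}-a_i+c_i}-q^{a_{i+1}-a_i+c_i}+q^{a_{i+1}-b_i+c_i}+q^{a_{i+1}-a_i+a_i-b_{i-1}+c_i+c_{i-1}}-q^{a_{i+1}-b_i-b_{i-1}+c_i+c_{i-1}}
\]
and similarly for the right. The first two terms on the left cancel outright; on the right the analogous cancellation leaves $q^{b_{i+1}-b_i+c_i}-q^{b_i-b_{i-1}+c_{i-1}}+\cdots$, and I would handle the ``extra'' single-exponential pieces by a shift of summation index $i\mapsto i+1$, using the hypothesis that only finitely many $a_i,b_i,c_i$ are nonzero so that all sums are finite and boundary terms vanish. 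Concretely, the term $q^{b_{i+1}-b_i+c_i}$ summed over $i$ equals, after shifting, something matching a term on the left; the term $q^{a_{i+1}-b_i+c_i}$ on the left should pair (after a shift) with $-q^{b_i-b_{i-1}+c_{i-1}}$-type remnants on the right. I expect that after grouping everything into (i) terms with a single exponent, (ii) ``mixed $a$--$b$'' terms $q^{a_{i+1}-b_i+c_i}$, and (iii) ``double-$c$'' terms $q^{a_{i+1}-b_{i-1}+c_i+c_{i-1}}$ and $q^{a_{i+1}+a_i-b_i-b_{i-1}+c_i+c_{i-1}}$, each group matches across the two sides either term-by-term or after a single index shift. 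The double-$c$ symmetric terms $-q^{a_{i+1}+a_i-b_i-b_{i-1}+c_i+c_{i-1}}$ are manifestly the same on both sides (they are symmetric under $a\leftrightarrow b$ combined with the relevant shift), so they cancel.

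\textbf{Main obstacle.} The bookkeeping of exactly which index shift aligns which family of terms is the delicate part: because $c_{i-1}$ appears paired with index $i$ while $c_i$ appears paired with both $i$ and $i+1$, the shifts are not uniform across term families, and one must be careful that the finite-support hypothesis genuinely kills the endpoint discrepancies (this is why the lemma is stated for finitely-supported sequences rather than, say, eventually-constant ones). The honest way to organize this is to define, for each side, the partial sum telescoping quantity explicitly---e.g. writing each side as $\sum_i(P_{i+1}-P_i)$ for an appropriate $P_i$ built from $q^{a_i}, q^{b_i}, q^{c_i}$---and verify the two resulting telescoped expressions agree. I would look to the original Lemma 5.1 of \cite{toda} (the $c_i\equiv 0$ case) for the shape of that telescoping witness and then check that inserting the $q^{c_i}$ factors at the right places preserves the cancellation; the factor $q^{c_{i-1}}$ attached precisely to the $q^{a_i-b_{i-1}}$ cross-term (rather than to $q^{a_i-b_i}$) is exactly what is needed to make the shift work, and confirming this alignment is the one genuinely non-mechanical step. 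Once the telescoping witness is identified the remainder is a routine finite computation.
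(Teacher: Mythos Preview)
Your approach is correct and essentially identical to the paper's: expand both summands into monomials, observe that the terms $-q^{c_i}$, $q^{a_{i+1}-b_{i-1}+c_i+c_{i-1}}$, and $-q^{a_{i+1}+a_i-b_i-b_{i-1}+c_i+c_{i-1}}$ appear identically on both sides, note that $\sum_i(q^{b_{i+1}-b_i+c_i}-q^{b_i-b_{i-1}+c_{i-1}})$ telescopes to zero, and match the remaining $\sum_i q^{a_{i+1}-b_i+c_i}=\sum_i q^{a_i-b_{i-1}+c_{i-1}}$ by the shift $i\mapsto i-1$. The ``main obstacle'' you flag is not one---once the four monomial types are written out (your expansion has a typo in the last term, which should be $q^{a_{i+1}+a_i-b_i-b_{i-1}+c_i+c_{i-1}}$), the matching is immediate and no auxiliary telescoping witness $P_i$ is needed.
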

\begin{proof}
    Observe that, without the summations, the left-hand-side expands to
    \[-q^{a_{i+1}+a_i-b_i-b_{i-1}+c_i+c_{i-1}}+q^{a_{i+1}-b_{i-1}+c_i+c_{i-1}}+q^{a_{i+1}-b_i+c_i}-q^{c_i}\]
    and the right-hand-side expands to
    \[q^{b_{i+1}-b_i+c_i} - q^{c_i} - q^{b_i-b_{i-1}+c_{i-1}}+q^{a_i-b_{i-1}+c_{i-1}}+q^{a_{i+1}-b_{i-1}+c_i+c_{i-1}}-q^{a_i+a_{i+1}-b_i-b_{i-1}+c_i+c_{i-1}}.\]
    First of all, from the right-hand-side we note that
    \[
        \sum_{i=-M}^N (q^{b_{i+1}-b_i+c_i} - q^{b_i-b_{i-1}+c_{i-1}}) = q^{b_{N+1}-b_N+c_N}-q^{b_{-M}-b_{-M-1}+c_{-M-1}}=0
    \]
    where we assume that $\{-M,\dots,0,\dots,N\}$ contains the support of the sequences $(a_i)_{i \in \mathbb{Z}},(b_i)_{i \in \mathbb{Z}}, (c_i)_{i \in \mathbb{Z}}$. Moreover, both the left-hand-side and right-hand-side contain the same $-q^{a_{i+1}+a_i-b_i-b_{i-1}+c_i+c_{i-1}}$, $q^{a_{i+1}-b_{i-1}+c_i+c_{i-1}}$, and $-q^{c_i}$ terms, leaving us to confirm the identity
    \[
        \sum_{i=-M}^N q^{a_{i+1}-b_i+c_i} = \sum_{i=-M}^N q^{a_i-b_{i-1}+c_{i-1}},
    \]
    which is true because the right-hand-side is just the $i \mapsto i-1$ shift of the left-hand-side.
\end{proof}
The generalization of \eqref{innerproduct} to consider in the case with general parameters is
\[
    \langle f,g \rangle_{q,r,\alpha} = \sum_{n \in \mathbb{N}^r} q^{\sum_{i=1}^r n_i(n_i-n_{i+1})} \bigg(\prod_{i=1}^r z_i^{n_i}\bigg) f(n)g(n). 
\]
Under this inner product, we can see that
\[
    (h^{r,\alpha}_n)^* = \sum_{i=1}^r q^{n_i-n_{i-1}}z_{i-1,r}R_{n_i}+\sum_{i=0}^r(1-q^{n_{i+1}-n_i})z_{i,r}
\]
because $z_{i,r}R_{n_i} \prod_{i=1}^r z_i^{n_i}=z_{i,r}z_i \prod_{i=1}^r z_i^{n_i}=z_{i-1,r}\prod_{i=1}^r z_i^{n_i}$. Thus, using our Lemma $3.1$, we arrive at
\begin{equation}\label{ok19}
    h^{r,\alpha}_n q_r(n,k)=z_r(h^{r-1,\alpha}_k)^*q_r(n,k),
\end{equation}
which is sufficient to conclude the Toda recursion $h^{r,\alpha}a_r(n,z;q)=0$ via induction.\newline
\indent The Doob transform becomes
\[
    L^{r,\alpha} = a_r(n,z;q)^{-1} \circ h^{r,\alpha} \circ a_r(n,z;q) = \sum_{i=1}^r q^{n_{i+1}-n_i}z_{i,r} \frac{a_r(n-e_i,z;q)}{a_r(n,z;q)} D_{n_i}
\]
and the generator for the Markov process $\pi(t)$ on $\Pi^{r,\alpha}$ is
\[
    G^{r,\alpha} = \sum_{(i,j) \in \delta_{r+1}} q^{\pi_{i+1,j-1}-\pi_{ij}} z_{i,r}b_{ij}(\pi,z;q) D_{\pi_{ij}}
\]
where $b_{ij}(\pi,z;q)=(1-q^{\pi_{ij}-\pi_{i,j-1}})(1-q^{\pi_{ij}-\pi_{i-1,j}}z_{i-1,i+j-1})$. The corresponding probability measure on $\Pi^{r,\alpha}_n$ is now given by
\[
    K^r_n(\pi,z;q)=q^{\displaystyle \sum_{(i,j)\in \delta_r} \pi_{ij}(\pi_{ij}-\pi_{i+1,j-1})}\bigg(\prod_{(i,j)\in \delta_r} z_i^{\pi_{ij}}\bigg) \frac{w_r(\pi,z;q)}{a_r(n,z)}
\]
where
\[
    w_r(\pi,z;q)=\prod_{(i,j)\in\delta_{r+1}} \frac{1}{(q)_{\pi_{ij}-\pi_{i,j-1}}(qz_{i-1,i+j-1})_{\pi_{ij}-\pi_{i-1,j}}}
\]
The Markov function result Theorem \ref{markovfunction} extends over to the general parameter setting as well,
\begin{theorem}\label{thm42}
    If $\pi(t), t\geq 0$ is the Markov process on $\Pi^{r,\alpha}$ with generator $G^{r,\alpha}$ and initial law $K^r_n$ for some $n \in \mathbb{N}$. Then the projection to the outermost diagonal $N(t)=(\pi_{1,r}(t),\dots,\pi_{r,1}(t))$ is a Markov process on $\mathbb{N}^r$ with generator $L^{r,\alpha}$ and, for all $t > 0$, the conditional law of $\pi(t)$ given $\{N(s), s\leq t\}$ is $K^r_{N(t)}$.
\end{theorem}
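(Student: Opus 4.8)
The plan is to repeat the proof of Theorem~\ref{markovfunction} essentially line by line, replacing each object ($h^r$, $q_r$, $\tilde{q}_r$, $\Lambda$, $K^r_n$, $G^r$, $L^r$, $\tilde{g}^r$) by its $\alpha$-deformed counterpart, the only genuinely new ingredient being the relation~\eqref{ok19}, which plays the role that the intertwining $h^r\circ q_r=q_r\circ h^{r-1}$ played before. First I would introduce the Markov operator
\[(\Lambda F)(n)=\sum_{\pi\in\Pi^{r,\alpha}_n}K^r_n(\pi,z;q)\,F(\pi)\]
(defined for $F\colon\Pi^{r,\alpha}_n\to\mathbb{R}$); that $\Lambda$ is genuinely a Markov operator---i.e.\ that $K^r_n(\cdot,z;q)$ is a probability measure on $\Pi^{r,\alpha}_n$---is precisely the statement that $a_r(n,z;q)$ admits the representation~\eqref{ok4}, which in turn is the Toda recursion $h^{r,\alpha}a_r(\cdot,z;q)=0$ already deduced from~\eqref{ok19}.

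Next I would introduce the operator $\tilde{q}_{r,\alpha}$ on functions of $\mathbb{N}^r\times\mathbb{N}^{r-1}$,
\[(\tilde{q}_{r,\alpha}f)(n)=\sum_{k\le n}q^{\sum_{i=1}^{r-1}k_i(k_i-k_{i+1})}\Big(\prod_{i=1}^{r-1}z_i^{k_i}\Big)q_{r,\alpha}(n,k)\,f(n,k),\]
and record the factorization $w_r(\pi,z;q)=q_{r,\alpha}(n,k)\,w_{r-1}(\pi|_{\delta_r},z;q)$ together with the way the weight $\prod_{(i,j)\in\delta_r}z_i^{\pi_{ij}}$ splits across successive diagonals. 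As in the two displays preceding Theorem~\ref{markovfunction}, these identities let me write, after multiplying through by $a_r(n,z;q)$, both $\Lambda F$ as $\tilde{q}_{r,\alpha}\tilde{q}_{r-1,\alpha}\cdots\tilde{q}_{1,\alpha}F_0$ for a suitable $F_0$ and $\Lambda G^{r,\alpha}F$ as $\sum_{s=1}^{r}z_{s,r}\,\tilde{q}_{r,\alpha}\cdots\tilde{q}_{s,\alpha}\,\tilde{g}^{s,\alpha}F_{s-1}$, where each $\tilde{g}^{s,\alpha}$ is the single-diagonal piece extracted from $G^{s,\alpha}$ exactly as $\tilde{g}^r$ is extracted from $G^r$ in the proof of Theorem~\ref{markovfunction}, and where the scalar $z_{s,r}$ enters because the prefactor $z_{i,r}$ of the box $(i,j)$ in $G^{r,\alpha}$ factors as $z_{i,s}\,z_{s,r}$.

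The heart of the matter is the one-step commutation
\[h^{r,\alpha}(\tilde{q}_{r,\alpha}f)=\tilde{q}_{r,\alpha}(g^{r,\alpha}f),\qquad g^{r,\alpha}=z_r\,h^{r-1,\alpha}_k+\tilde{g}^{r,\alpha},\]
with $\tilde{g}^{r,\alpha}=\sum_{i+j-1=r}q^{n_{i+1}-n_i}z_{i,r}(1-q^{n_i-k_i})(1-z_{i-1,r}q^{n_i-k_{i-1}})D_{n_i}$. To prove it I would apply $h^{r,\alpha}_n$ to the product $q_{r,\alpha}(n,k)f(n,k)$ and expand each $L_{n_i}$ by the Leibniz rule: the term carrying the $n$-dependence of $q_{r,\alpha}$ is $\big(h^{r,\alpha}_nq_{r,\alpha}(n,k)\big)f(n,k)$, which~\eqref{ok19} rewrites as $z_r\big((h^{r-1,\alpha}_k)^*q_{r,\alpha}(n,k)\big)f(n,k)$ and which, after transferring the adjoint onto $f$ inside the sum defining $\tilde{q}_{r,\alpha}$ (here the inner product $\langle\,\cdot\,,\,\cdot\,\rangle_{q,r-1,\alpha}$ is used), becomes $z_r\,q_{r,\alpha}(n,k)\,(h^{r-1,\alpha}_kf)(n,k)$; the remaining transport term $\sum_iq^{n_{i+1}-n_i}z_{i,r}\big(L_{n_i}q_{r,\alpha}(n,k)\big)(D_{n_i}f)(n,k)$ equals $q_{r,\alpha}(n,k)\,(\tilde{g}^{r,\alpha}f)(n,k)$ since $L_{n_i}q_{r,\alpha}(n,k)/q_{r,\alpha}(n,k)=(1-q^{n_i-k_i})(1-z_{i-1,r}q^{n_i-k_{i-1}})$. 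The step I expect to be the main obstacle is precisely keeping the drift factors $z_{i,r}$ straight here and through the ensuing telescoping: one must check that the constant $z_r$ emitted by~\eqref{ok19} at each level accumulates so that, by $z_{i,r}=z_{i,s}z_{s,r}$, the scalar standing in front of $\tilde{g}^{s,\alpha}$ after descending the tower is exactly the $z_{s,r}$ needed to reassemble the $z_{i,r}$ of $G^{r,\alpha}$; nothing analogous occurs in~\cite{neil} or in the proof of Theorem~\ref{markovfunction}.

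Granting the one-step commutation, I would iterate it down the tower of diagonals as in Theorem~\ref{markovfunction}, obtaining
\[h^{r,\alpha}\,\tilde{q}_{r,\alpha}\cdots\tilde{q}_{1,\alpha}F_0=\Big(\textstyle\prod_{i=1}^{r}z_i\Big)\,\tilde{q}_{r,\alpha}\cdots\tilde{q}_{1,\alpha}\,h^{0,\alpha}F_0+\sum_{s=1}^{r}z_{s,r}\,\tilde{q}_{r,\alpha}\cdots\tilde{q}_{s,\alpha}\,\tilde{g}^{s,\alpha}F_{s-1},\]
the first term vanishing since $h^{0,\alpha}F_0=0$ and the sum being $a_r(n,z;q)\,\Lambda G^{r,\alpha}F$. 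Since the left-hand side equals $h^{r,\alpha}\big(a_r(n,z;q)\,\Lambda F\big)=a_r(n,z;q)\,L^{r,\alpha}\Lambda F$ by the definition of the Doob transform $L^{r,\alpha}$, dividing through by $a_r(n,z;q)$ yields the intertwining $L^{r,\alpha}\Lambda=\Lambda G^{r,\alpha}$. The stated conclusion---that $N(t)=(\pi_{1,r}(t),\dots,\pi_{r,1}(t))$ is a Markov process with generator $L^{r,\alpha}$ and that for $t>0$ the conditional law of $\pi(t)$ given $\{N(s):s\le t\}$ is $K^r_{N(t)}$---then follows from Equation~$(1)$ of~\cite{pitman}, exactly as in Theorem~\ref{markovfunction}.
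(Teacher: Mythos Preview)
Your proposal is correct and follows essentially the same approach as the paper's own proof: introduce the $\alpha$-deformed operators $\tilde{q}_{r,\alpha}$, $\tilde{g}^{r,\alpha}$, $g^{r,\alpha}=z_r h^{r-1,\alpha}_k+\tilde{g}^{r,\alpha}$, prove the one-step commutation $h^{r,\alpha}\circ\tilde{q}_{r,\alpha}=\tilde{q}_{r,\alpha}\circ g^{r,\alpha}$ from~\eqref{ok19} together with $L_{n_i}q_{r,\alpha}/q_{r,\alpha}=(1-q^{n_i-k_i})(1-z_{i-1,r}q^{n_i-k_{i-1}})$, and then iterate, tracking the accumulating factor $z_r z_{r-1}\cdots z_{s+1}=z_{s,r}$ so that the diagonal-$s$ contribution reassembles the $z_{i,r}$ prefactor of $G^{r,\alpha}$. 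The paper records exactly this computation (with the same bookkeeping of the $z$-factors) and then invokes~\cite{pitman}, so there is nothing to add.
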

\begin{proof}
The proof is almost the same as Theorem \ref{markovfunction}, except the presence of an extra $z_r$ in \eqref{ok19}  means that if we set $(\widetilde{q}_rf)(n)=\displaystyle \sum_{k \leq n} q^{\displaystyle \sum_{i=1}^{r-1}k_i(k_i-k_{i+1})} z^k q_r(n,k)f(n,k)$ then $h_r \circ \widetilde{q}_r =\widetilde{q}_r \circ g^r$ where
\[
    \widetilde{g}^r = \sum_{i +j -1 =r}q^{n_{i+1}-n_i} z_{i,r} (1-q^{n_i-k_i})(1-q^{n_i-k_{i-1}}z_{i-1,i+j-1})D_{n_i}
\]
and $g^r = z_rh^{r-1}_k+\widetilde{g}^r$. One can observe that \[\widetilde{q}_r \circ g^r \circ \widetilde{q}_{r-1}=\widetilde{q}_r\circ (z_rh^{r-1}+\widetilde{g}^r) \circ \widetilde{q}_{r-1} = z_r z_{r-1} \widetilde{q}_r \circ \widetilde{q}_{r-1} \circ h^{r-2}+z_r\widetilde{q}_r \circ \widetilde{q}_{r-1}\circ \widetilde{g}^{r-1}+\widetilde{q}_r \circ \widetilde{g}^r \circ \widetilde{q}_{r-1}  \]
where \[z_r\widetilde{q}_r \circ \widetilde{q}_{r-1}\circ \widetilde{g}^{r-1}=\widetilde{q}_r\circ \widetilde{q}_{r-1} \circ  \sum_{i+j-1=r-1} q^{k_{i+1}-k_i}z_{i,r} (1-q^{k_i-l_i})(1-q^{n_i-l_{i-1}}z_{i-1,i+j-1})D_{k_i}.\]
Thus, the iteration present in Theorem \ref{markovfunction} still works here, except we see $z_r \dots z_{r-i+1} \widetilde{g}^{r-i}$ at every stage instead of $\widetilde{g}^{r-i}$, hence what appears in $G^{r,\alpha}$ is $z_{i,r}$ rather than $z_{i,i+j-1}$. This is the only difference with Theorem \ref{markovfunction}, the rest of the proof proceeds in a straightforward manner.
\end{proof}
\section{Further Markovian Projections}
\indent Let $\lambda$ be a Young diagram which is not necessarily a staircase shape. Fix a Young diagram $\mu \subset \lambda^{\circ}$ and let $\tilde{\mu}$ be the extension of $\mu$ which contains $(i,j) \in \lambda$ for which $(i-1,j) \in \mu$ or $(i,j-1) \in \mu$. We consider the set $\Pi^{\lambda/\mu}$ of nonnegative integer arrays $(\sigma_{ij})_{(i,j) \in \lambda/\mu}$ for which $\sigma_{ij} \geq \mathrm{max}\{\sigma_{i,j-1},\sigma_{i-1,j}\}$ holds, with the convention that $\sigma_{ij}=0$ for $(i,j) \not\in \lambda/\mu$. We begin by proving Markov projections onto the boundary values at $\lambda/\mu$ for staircase shape $\mu=\delta_{r+1}$. Although this case provides essentialy no new content for us, we choose to use it as a warm-up for Subsection $5.2$'s proof concerning general skew shapes $\lambda/\mu$. The reader should note that the bulk of the details of the proof of Theorem \ref{bigtheorem}, also stated as Theorem \ref{bigresult} in Subsection $5.2$, are contained in Appendix $A$.
\subsection{Warm-Up}
For $\pi \in \Pi^{\lambda}=\Pi^{\lambda/\emptyset}$ and $\sigma \in \Pi^{\lambda/\mu}$ define
\[\widetilde{b}_{ij}(\pi;q)=q^{\pi_{i+1,j-1}-\pi_{ij}}(1-q^{\pi_{ij}-\pi_{i,j-1}})(1-q^{\pi_{ij}-\pi_{i-1,j}}),\text{ }\text{ }\text{ }\text{ }\text{ }\text{ }\text{ } \widetilde{b}_{ij}(\sigma;q)=q^{\sigma_{i+1,j-1}-\sigma_{ij}}(1-q^{\sigma_{ij}-\sigma_{i,j-1}})(1-q^{\sigma_{ij}-\sigma_{i-1,j}}).\]
Let $\mu=\delta_{r+1}$ be a staircase shape and assume that $\delta_{r+2} \subset \lambda$, we say $(u,w)\rightrightarrows (i,j)$ if $\{u,w\}=\{(i-1,j),(i,j-1)\}$.
\begin{definition}
    The generator for the Markov process on $\Pi^{\lambda/\mu}$ is
    \[
        G^{\lambda/\mu} = \sum_{(i,j) \in \lambda/\mu} \widetilde{b}_{ij}(\sigma;q) D_{\sigma_{ij}}.
    \]
\end{definition}
For $\sigma \in \Pi^{\lambda/\mu}$ we consider the set $\Pi^{\lambda}_{\sigma}$ consisting of nonnegative integer arrays $\pi \in \Pi^{\lambda}$ for which $\pi|_{\lambda/\mu}=\sigma$. For each $\pi \in \Pi^{\lambda}_{\sigma}$ we may define the weight
\[
    W_{\lambda,\mu}(\pi;q) = \prod_{(i,j) \in \delta_{r+2}} \binom{\pi_{ij}}{\pi_{i,j-1}}_q \binom{\pi_{ij}}{\pi_{i-1,j}}_q
\]
and the corresponding probability measure on $\Pi^{\lambda/\mu}_{\sigma}$ is given by
\[
    K^{\lambda,\mu}_{\sigma}(\pi;q)=q^{\displaystyle\sum_{(i,j) \in \delta_{r+1}} \pi_{ij}(\pi_{ij}-\pi_{i+1,j-1})} \frac{W_{\lambda,\mu}(\pi;q)}{A_{\lambda,\mu}(\sigma;q)}
\]
where
\begin{equation}
    A_{\lambda,\mu}(\sigma;q) = \sum_{\pi\in\Pi^{\lambda}_{\sigma}}q^{\displaystyle\sum_{(i,j) \in \delta_{r+1}} \pi_{ij}(\pi_{ij}-\pi_{i+1,j-1})} W_{\lambda,\mu}(\pi;q).
\end{equation}
It will be convenient for us to use the notation
\[
    \widetilde{W}_{\lambda,\mu}(\pi;q) = q^{\displaystyle\sum_{(i,j) \in \delta_{r+1}} \pi_{ij}(\pi_{ij}-\pi_{i+1,j-1})} W_{\lambda,\mu}(\pi;q).
\]
This subsection's main result, which will prepare us for the proof in Subsection $5.2$, is
\begin{theorem}\label{bigdriftlessresult}
    Fix $\mu=\delta_{r+1}$ and a Young diagram $\lambda$ with $\mu \subset \lambda^{\circ}$. If $\pi(t)$ is the Markov process on $\Pi^{\lambda}$ with generator $G^{\lambda}$ and initial law, conditional on the values on $\lambda/\mu$, given by $K^{\lambda,\mu}_{\sigma}$ for some $\sigma \in \Pi^{\lambda/\mu}$, then $\sigma(t)=(\pi(t))_{\lambda/\mu}$ is a Markov process and, for all $t > 0$, the conditional law of $\pi(t)$ given $\{\sigma(s), s \leq t\}$ is $K^{\lambda,\mu}_{\sigma(t)}$.
\end{theorem}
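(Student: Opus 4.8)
The plan is to reduce the statement to a single intertwining identity and then invoke the Rogers--Pitman criterion (Equation $(1)$ in \cite{pitman}), exactly as in the proofs of Theorems \ref{markovfunction} and \ref{thm42}. Write $\Lambda$ for the Markov kernel from $\Pi^{\lambda/\mu}$ to $\Pi^{\lambda}$ with $\Lambda(\sigma,\cdot)=K^{\lambda,\mu}_{\sigma}(\cdot\,;q)$; it is supported on the fibre $\Pi^{\lambda}_{\sigma}$, hence a genuine lift of the projection $\pi\mapsto\pi|_{\lambda/\mu}$. The target is the intertwining $G^{\lambda/\mu}\Lambda=\Lambda G^{\lambda}$, where $G^{\lambda/\mu}$ is understood as the Doob transform of the corresponding boundary difference operator by the weight $A_{\lambda,\mu}(\sigma;q)$ --- the role played by $a_r(n;q)$ in Section $3$. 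Once this is in hand, Rogers--Pitman yields simultaneously that $\sigma(t)=\pi(t)|_{\lambda/\mu}$ is Markov and that for every $t>0$ the conditional law of $\pi(t)$ given $\{\sigma(s):s\le t\}$ is $K^{\lambda,\mu}_{\sigma(t)}$, which is the assertion of the theorem.

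The cleanest route to the intertwining rests on the observation that $\widetilde{W}_{\lambda,\mu}(\pi;q)$ involves no entry $\pi_{ij}$ with $(i,j)$ outside $\delta_{r+2}$: both the quadratic exponent and every $q$-binomial factor are supported on $\delta_{r+2}$. Reorganizing these $q$-binomials exactly as in the passage from \eqref{labelle} to \eqref{ok9} (pulling the numerators $(q)_{\pi_{ij}}$ through) rewrites $W_{\lambda,\mu}(\pi;q)$ as $w_{r+1}(\pi|_{\delta_{r+2}};q)$ times a product of $q$-Pochhammer factors supported on the boundary diagonal $\delta_{r+2}/\delta_{r+1}$; consequently $A_{\lambda,\mu}(\sigma;q)$ is the Whittaker coefficient $a_{r+1}$ of that diagonal up to a boundary factor, and
\[
K^{\lambda,\mu}_{\sigma}(\pi;q)=K^{r+1}_{\sigma|_{\delta_{r+2}/\delta_{r+1}}}\big(\pi|_{\delta_{r+2}};q\big).
\]
Thus under $K^{\lambda,\mu}_{\sigma}$ the array $\pi|_{\lambda\setminus\delta_{r+2}}$ is a point mass at $\sigma|_{\lambda\setminus\delta_{r+2}}$ while $\pi|_{\delta_{r+2}}$ carries the law $K^{r+1}$ of the boundary diagonal. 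I would then verify two structural facts about $G^{\lambda}$: first, the coordinates $\pi|_{\delta_{r+2}}$ form a closed subsystem evolving precisely under $G^{r+1}$ --- every rate $\widetilde b_{ij}$ attached to a cell of $\delta_{r+2}$ depends only on entries of $\delta_{r+2}$ under the standard $0$-boundary convention, and the inequalities tying $\delta_{r+2}$ to $\lambda\setminus\delta_{r+2}$ are self-maintained because the factors $(1-q^{\sigma_{ij}-\sigma_{i'j'}})$ in the rates vanish at equality; second, the remaining cells $\pi|_{\lambda\setminus\delta_{r+2}}$ jump at rates depending on $\sigma=\pi|_{\lambda/\mu}$ alone. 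Theorem \ref{markovfunction} applied at level $r+1$ to the autonomous process $\pi|_{\delta_{r+2}}$ then shows that its outermost diagonal --- which is exactly $\sigma|_{\delta_{r+2}/\delta_{r+1}}$ --- is Markov with generator $L^{r+1}$ and that, conditionally on the past of that diagonal, $\pi|_{\delta_{r+2}}$ has law $K^{r+1}$ of its current value; combining this with the $\sigma$-driven evolution of the remaining cells and the fact that those cells carry no further information about $\pi|_{\mu}$ gives both conclusions, the generator of $\sigma(t)$ being read off as $L^{r+1}$ on the boundary diagonal $\delta_{r+2}/\delta_{r+1}$ together with the local rates $\widetilde b_{ij}(\sigma;q)$ on $\lambda\setminus\delta_{r+2}$.

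An alternative --- and the one that generalizes to Subsection $5.2$ --- is to prove $G^{\lambda/\mu}\Lambda=\Lambda G^{\lambda}$ head-on by peeling off $\mu=\delta_{r+1}$ one anti-diagonal at a time: one introduces layered operators in the style of the $\widetilde q_r$ of Section $3$ that integrate out successive diagonals of $\mu$, each intertwining an $h$-type operator with the next (now carrying boundary terms coming from $\sigma$), and iterates as in the proof of Theorem \ref{markovfunction}; the local intertwining at each stage is again Labelle's Lemma $5.1$ from \cite{toda}, already used in Section $3$. In either approach the main obstacle is the interface $\delta_{r+2}/\delta_{r+1}$ between $\mu$ and $\lambda/\mu$: a cell there has a rate in $G^{\lambda}$ that genuinely depends on the hidden entries $\pi_{i,j-1},\pi_{i-1,j}\in\mu$, and one must check that summing the $G^{\lambda}$-rates of the hidden $\mu$-cells against $\widetilde{W}_{\lambda,\mu}$ combines with the boundary-diagonal contribution to reproduce exactly the coefficients of the Doob-transformed operator governing $\sigma$ --- precisely the bookkeeping that Labelle's Lemma $5.1$ performs here, and that the Appendix $A$ lemma performs in the general setting.
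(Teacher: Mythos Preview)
Your first approach --- reducing to Theorem \ref{markovfunction} at level $r+1$ via the autonomy of the subsystem $\pi|_{\delta_{r+2}}$ --- is correct. The key observations all check out: rates at cells of $\delta_{r+2}$ depend only on $\delta_{r+2}$-entries because $(i+1,j-1),(i,j-1),(i-1,j)$ all lie in $\delta_{r+2}$ when $(i,j)$ does; rates at cells of $\lambda\setminus\delta_{r+2}$ depend only on $\sigma$ because their neighbours already lie in $\lambda/\mu$; and $K^{\lambda,\mu}_{\sigma}$ agrees with $K^{r+1}_n$ for $n=\sigma|_{\delta_{r+2}/\delta_{r+1}}$ after the boundary $(q)_n^2$ factor cancels between $W_{\lambda,\mu}$ and $A_{\lambda,\mu}$. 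The assembly via Rogers--Pitman then goes through, the resulting generator being $L^{r+1}$ on the diagonal $\delta_{r+2}/\delta_{r+1}$ together with the raw rates $\widetilde b_{ij}(\sigma;q)$ on $\lambda\setminus\delta_{r+2}$.

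This is, however, a genuinely different route from the paper's. The paper neither invokes Theorem \ref{markovfunction} nor peels off diagonals of $\mu$; instead it proves the intertwining $H^{\lambda/\mu}\Lambda=\Lambda G^{\lambda}$ head-on, where $H^{\lambda/\mu}=G^{\lambda/\mu}+V_{\lambda,\mu}$ carries an explicit potential $V_{\lambda,\mu}(\sigma)=\sum_{(i,j)\in C(\mu)}(1-q^{\sigma_{i+1,j}})(1-q^{\sigma_{i,j+1}})$. After Lemma \ref{ok20} handles the product-rule terms, the intertwining reduces to the pointwise scalar identity
\[
\sum_{(i,j)\in\lambda/\mu}\bigl[\widetilde b_{ij}(\pi)-\widetilde b_{ij}(\sigma)\bigr]+V_{\lambda,\mu}(\sigma)=\sum_{(i,j)\in\mu}\bigl[b'_{ij}(\pi)-\widetilde b_{ij}(\pi)\bigr],
\]
which the paper verifies by expanding $b'_{ij}-\widetilde b_{ij}$ into four telescoping sums $(A)$--$(D)$ over an enlarged staircase $\hat\mu$ and matching against Lemma \ref{somelemma1}. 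Your reduction is cleaner here but leans essentially on $\mathrm{Vert}_{\mu}(\lambda)=\emptyset$: once $\mu$ is not a staircase, cells of $\mathrm{Vert}_{\mu}(\lambda)$ carry rates involving hidden entries and no autonomous subsystem exists to fall back on. The paper's scalar-identity method, by contrast, is precisely what extends to Subsection 5.2 and Appendix A; neither of your two sketches quite anticipates it, though your second is closer in spirit.
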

Before we begin the proof of Theorem \ref{bigdriftlessresult}, we record two important lemmas.
\begin{lemma}\label{somelemma1} For $\mu=\delta_{r+1}$,
    \begin{equation}\label{ok27}\begin{split}
        \sum_{(i,j) \in \lambda/\mu} [\widetilde{b}_{ij}(\pi;q)-\widetilde{b}_{ij}(\sigma;q)]&=-\sum_{\substack{ v=(i,j)\in \lambda/\mu \\ (u,w)\rightrightarrows v \\ u \in \mu , x=(i+1,j-1) }} q^{\pi_x-\pi_u}(1-q^{\pi_u})(1-q^{\pi_v-\pi_w})\\&-\sum_{\substack{(i,j) \in \lambda/\mu \\ (i,j-1) \in \mu \text{ and } (i-1,j) \in \mu }} q^{\pi_{i+1,j-1}+\pi_{ij}-\pi_{i-1,j}-\pi_{i,j-1}} (1-q^{\pi_{i-1,j}})(1-q^{\pi_{i,j-1}})
    \end{split}
    \end{equation}
    where $(u,w) \rightrightarrows v$ and $(w,u) \rightrightarrows v$ are equivalent statements indexing different elements. When expanded,
    \begin{equation}\label{ok22}\begin{split}
        \sum_{(i,j) \in \lambda/\mu} [\widetilde{b}_{ij}(\pi;q)-\widetilde{b}_{ij}(\sigma;q)] &=(r+2)-q^{\pi_{1,r+1}}-\sum_{\substack{(i,j) \in \lambda/\mu \\ (i-1,j) \in \mu \text{ or } (i,j-1) \in \mu}} (1-q^{\pi_{i+1,j-1}})(1-q^{\pi_{ij}})\\&-\sum_{\substack{(i,j) \in \delta_{r+2}/\delta_{r+1} }} (q^{\pi_{i+1,j-1}-\pi_{i,j-1}}+q^{\pi_{i+1,j-1}-\pi_{i-1,j}}-q^{\pi_{i+1,j-1}+\pi_{ij}-\pi_{i,j-1}-\pi_{i-1,j}})
    \end{split}
    \end{equation}
\end{lemma}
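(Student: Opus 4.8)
The plan is to exploit the fact that $\widetilde{b}_{ij}(\pi)$ and $\widetilde{b}_{ij}(\sigma)$ disagree only at the few cells where one of the three inputs to $\widetilde{b}_{ij}$ --- the cells $(i+1,j-1)$, $(i,j-1)$, $(i-1,j)$ --- lies in $\mu$, since $\sigma$ vanishes on $\mu$ (and off $\lambda$) by convention whereas $\pi$ need not. First I would observe that $(i+1,j-1)$ lies on the same diagonal as $(i,j)$, hence is never in $\mu=\delta_{r+1}$ when $(i,j)\in\lambda/\mu$; and that $(i,j-1)\in\mu$ or $(i-1,j)\in\mu$ forces $i+j-1=r+1$, i.e.\ $(i,j)\in\delta_{r+2}/\delta_{r+1}$, while conversely every cell on that diagonal lies in $\lambda$ because $\delta_{r+2}\subset\lambda$. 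So both left-hand sums in \eqref{ok27} and \eqref{ok22} collapse to a sum over the $r+1$ cells of $\delta_{r+2}/\delta_{r+1}$, and on those cells $\sigma_{i,j-1}=\sigma_{i-1,j}=0$ while $\sigma_{i+1,j-1}=\pi_{i+1,j-1}$ and $\sigma_{ij}=\pi_{ij}$, so that $\widetilde{b}_{ij}(\sigma)=q^{\pi_{i+1,j-1}-\pi_{ij}}(1-q^{\pi_{ij}})^2$. One should also check that the index set ``$(i-1,j)\in\mu$ or $(i,j-1)\in\mu$'' appearing in \eqref{ok22} is exactly $\delta_{r+2}/\delta_{r+1}$, so that that sum really runs over the same cells.

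With this reduction, \eqref{ok27} becomes a cell-by-cell identity: after dividing through by $q^{\pi_{i+1,j-1}}$, each side is a short Laurent polynomial in the three monomials $q^{\pi_{ij}}$, $q^{\pi_{i,j-1}}$, $q^{\pi_{i-1,j}}$, and one expands the products on both sides and matches terms. I would treat the generic diagonal cell first (both $(i-1,j)$ and $(i,j-1)$ genuine cells of $\mu$, so the right-hand side of \eqref{ok27} contributes the two terms $(u,w)\rightrightarrows(i,j)$ and $(w,u)\rightrightarrows(i,j)$ together with one term in the second sum), then the two endpoint cells $(1,r+1)$ and $(r+1,1)$, where one of $(i-1,j),(i,j-1)$ leaves $\lambda$ and equals $0$: there only one of the two $\rightrightarrows$-terms survives, no second-sum term appears, and the expansion is correspondingly shorter. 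This proves \eqref{ok27}.

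To pass from \eqref{ok27} to \eqref{ok22} I would expand the right-hand side of \eqref{ok27} monomial by monomial on each cell of $\delta_{r+2}/\delta_{r+1}$ and compare with the per-cell contribution to the right-hand side of \eqref{ok22}; the two agree up to the uniform discrepancy $1+q^{\pi_{i+1,j-1}}-q^{\pi_{ij}}$ (this holds at the endpoint cells as well). Summing the discrepancy over the diagonal, the constants contribute $r+1$, and $\sum_{(i,j)\in\delta_{r+2}/\delta_{r+1}}(q^{\pi_{i+1,j-1}}-q^{\pi_{ij}})$ telescopes along the diagonal --- $(i+1,j-1)$ is the next cell down, and the final successor $(r+2,0)$ has $\pi$-value $0$ --- to $1-q^{\pi_{1,r+1}}$. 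Hence the total discrepancy is $(r+2)-q^{\pi_{1,r+1}}$, which is precisely the constant prefacing the two sums in \eqref{ok22}, and combining with the already-identified sums completes the derivation.

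I expect the only real friction to be bookkeeping rather than genuine mathematics: keeping straight which neighbours of a diagonal-$(r+1)$ cell lie in $\mu$, in $\lambda/\mu$, or outside $\lambda$ altogether; handling the two endpoint cells with the correct boundary conventions so that the ``$(1-q^{\pi_{ij}})^2$'' shape of $\widetilde{b}_{ij}(\sigma)$ and the single-term form of the right-hand side still hold there; and getting the sign of the telescoping right. The underlying per-cell identities are elementary polynomial manipulations.
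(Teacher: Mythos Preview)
Your proposal is correct and follows essentially the same route as the paper: both reduce the left-hand side to the diagonal $\delta_{r+2}/\delta_{r+1}$ by noting that $\widetilde{b}_{ij}(\pi)=\widetilde{b}_{ij}(\sigma)$ unless a neighbour lies in $\mu$, then verify \eqref{ok27} cell-by-cell (the paper packages this as a function $c_{ij}$, you expand directly), and finally pass to \eqref{ok22} via the same telescoping identity $2q^{\pi_x}-q^{\pi_x+\pi_v}=-(1-q^{\pi_x})(1-q^{\pi_v})+1+(q^{\pi_x}-q^{\pi_v})$, which is exactly your per-cell discrepancy $1+q^{\pi_{i+1,j-1}}-q^{\pi_{ij}}$. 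Your explicit endpoint treatment is slightly more cautious than needed---the generic formula already degenerates correctly there since $\pi_{0,r+1}=\pi_{r+1,0}=0$ kills the relevant factors---but this does no harm.
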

\begin{proof}
    Observe that
    \[\begin{split}
        \sum_{\substack{(i,j) \in \lambda/\mu\\ (i+1,j-1) \not\in \mu }} [\widetilde{b}_{ij}&(\pi;q)-\widetilde{b}_{ij}(\sigma;q)]=\\&\sum_{\substack{(i,j) \in \lambda/\mu\\ (i+1,j-1) \not\in \mu \\ (i,j-1) \not\in \lambda/\mu \text{ or }(i-1,j) \not\in \lambda/\mu}} q^{\pi_{i+1,j-1}-\pi_{ij}}[(1-q^{\pi_{ij}-\pi_{i,j-1}})(1-q^{\pi_{ij}-\pi_{i-1,j}})-(1-q^{\pi_{ij}-\sigma_{i,j-1}})(1-q^{\pi_{ij}-\sigma_{i-1,j}})].
    \end{split}\]
    For $(i,j)\in \lambda/\mu$ define $c_{ij}(\pi;q)$ to be $-(1-q^{\pi_{i,j-1}})(1-q^{\pi_{ij}-\pi_{i-1,j}})$ if $(i,j-1) \not\in \lambda/\mu, (i-1,j) \in \lambda/\mu$, with $c_{ij}(\pi;q)=-(1-q^{\pi_{ij}-\pi_{i,j-1}})(1-q^{\pi_{i-1,j}})$ if $(i,j-1) \in \lambda/\mu, (i-1,j) \not\in \lambda/\mu$, and finally
    \[
        c_{ij}(\pi;q)=-q^{\pi_{i-1,j}}(1-q^{\pi_{i,j-1}})(1-q^{\pi_{ij}-\pi_{i-1,j}})-q^{\pi_{i,j-1}}(1-q^{\pi_{ij}-\pi_{i,j-1}})(1-q^{\pi_{i-1,j}})-q^{\pi_{ij}}(1-q^{\pi_{i,j-1}})(1-q^{\pi_{i-1,j}})
    \]
    if $(i,j-1) \not\in \lambda/\mu, (i-1,j) \not\in \lambda/\mu$. We have
    \[\begin{split}
        (1-q^{\pi_{ij}-\pi_{i,j-1}})(1-q^{\pi_{ij}-\pi_{i-1,j}})-(1-&q^{\pi_{ij}-\sigma_{i,j-1}})(1-q^{\pi_{ij}-\sigma_{i-1,j}})\\&=\begin{cases}
           q^{\pi_{ij}-\pi_{i,j-1}}c_{ij}(\pi;q) ,&\text{ if } (i,j-1) \not\in \lambda/\mu, (i-1,j) \in \lambda/\mu \\
           q^{\pi_{ij}-\pi_{i-1,j}}c_{ij}(\pi;q), &\text{ if } (i,j-1) \in \lambda/\mu, (i-1,j) \not\in \lambda/\mu \\
           q^{\pi_{ij}-\pi_{i-1,j}-\pi_{i,j-1}}c_{ij}(\pi;q), &\text{ if } (i,j-1) \not\in \lambda/\mu, (i-1,j) \not\in \lambda/\mu
        \end{cases}
    \end{split}
    \]
    One should recall that we are considering staircase shapes for $\mu$, so $(i,j-1) \not\in \lambda/\mu$ and $(i-1,j) \not\in \lambda/\mu$ is always the case we consider. For \eqref{ok22} we expand \eqref{ok27} and use
    \[
        2q^{\pi_x}-q^{\pi_x+\pi_v}=-(1-q^{\pi_x})(1-q^{\pi_v})+1+(q^{\pi_x}-q^{\pi_v}).
    \]
\end{proof}
Another lemma that will be key for us is
\begin{lemma}\label{ok20}
    For $\pi \in \Pi^{\lambda}_{\sigma}$ and $(i,j) \in \lambda/\mu$,
    \begin{equation}
        \widetilde{b}_{ij}(\sigma;q) L_{\sigma_{ij}}\widetilde{W}_{\lambda,\mu}(\pi;q)=\widetilde{b}_{ij}(\pi;q)\widetilde{W}_{\lambda,\mu}(\pi;q).
    \end{equation}
\end{lemma}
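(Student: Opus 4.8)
The plan is to reduce Lemma~\ref{ok20} to a single $q$-Pochhammer ratio, after first locating precisely where the variable $\pi_{ij}$ occurs in $\widetilde{W}_{\lambda,\mu}(\pi;q)$. Since $(i,j)\in\lambda/\mu$ with $\mu=\delta_{r+1}$ and $\delta_{r+2}\subset\lambda$, the position $(i,j)$ lies on some anti-diagonal $i+j\geq r+2$. The exponent defining $\widetilde{W}_{\lambda,\mu}$ ranges over $\delta_{r+1}$ and the $q$-binomials over $\delta_{r+2}$, so a direct inspection of which entries involve $\pi_{ij}$ shows that $\pi_{ij}$ never appears in the exponent, and appears in the $q$-binomials only as an \emph{upper} entry through the factor $\binom{\pi_{ij}}{\pi_{i,j-1}}_q\binom{\pi_{ij}}{\pi_{i-1,j}}_q$, and only when $(i,j)\in\delta_{r+2}$, i.e.\ when $i+j=r+2$. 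This dictates a split into the cases $i+j\geq r+3$ and $i+j=r+2$.

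In the case $i+j\geq r+3$, the variable $\pi_{ij}$ does not occur in $\widetilde{W}_{\lambda,\mu}(\pi;q)$ at all, so $L_{\sigma_{ij}}\widetilde{W}_{\lambda,\mu}(\pi;q)=\widetilde{W}_{\lambda,\mu}(\pi;q)$. Moreover each of $(i+1,j-1),(i,j-1),(i-1,j)$ then lies on an anti-diagonal $\geq r+2$, hence is in $\lambda/\mu$ or off the board; in either situation $\sigma$ and $\pi$ coincide there, so $\widetilde{b}_{ij}(\sigma;q)=\widetilde{b}_{ij}(\pi;q)$ and the identity is immediate.

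In the case $i+j=r+2$, the points $(i,j-1)$ and $(i-1,j)$ lie on anti-diagonal $r+1$, hence in $\mu$ (or off the board), so $\sigma_{i,j-1}=\sigma_{i-1,j}=0$, whereas $(i+1,j-1)$ lies on anti-diagonal $r+2$, so $\sigma_{i+1,j-1}=\pi_{i+1,j-1}$; consequently
\[
\widetilde{b}_{ij}(\sigma;q)=q^{\pi_{i+1,j-1}-\pi_{ij}}\bigl(1-q^{\pi_{ij}}\bigr)^2 .
\]
On the other side, applying $L_{\sigma_{ij}}$ just decrements $\pi_{ij}$ in the single factor $\binom{\pi_{ij}}{\pi_{i,j-1}}_q\binom{\pi_{ij}}{\pi_{i-1,j}}_q$, and from $\binom{n}{k}_q=(q)_n/\bigl((q)_k(q)_{n-k}\bigr)$ one reads off $\binom{n-1}{k}_q/\binom{n}{k}_q=(1-q^{n-k})/(1-q^{n})$, so
\[
L_{\sigma_{ij}}\widetilde{W}_{\lambda,\mu}(\pi;q)=\frac{\bigl(1-q^{\pi_{ij}-\pi_{i,j-1}}\bigr)\bigl(1-q^{\pi_{ij}-\pi_{i-1,j}}\bigr)}{\bigl(1-q^{\pi_{ij}}\bigr)^2}\,\widetilde{W}_{\lambda,\mu}(\pi;q).
\]
Multiplying the two displays cancels $(1-q^{\pi_{ij}})^2$ and leaves exactly $\widetilde{b}_{ij}(\pi;q)\widetilde{W}_{\lambda,\mu}(\pi;q)$, as required.

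It remains to dispose of the degeneracies. If $\pi_{ij}=0$ then $\pi_{i,j-1}=0$ since $\pi\in\Pi^{\lambda}_{\sigma}$, so the right-hand side vanishes through the factor $1-q^{\pi_{ij}-\pi_{i,j-1}}$ while the left-hand side vanishes by the convention that $L_{\sigma_{ij}}$ returns $0$ at $\sigma_{ij}=0$; and if $(i,j)$ sits on the outer edge of $\lambda$ (so $i=1$ or $j=1$) the absent neighbour contributes $0$ by the out-of-range convention, consistently with all of the above. I do not anticipate a real obstacle; the only step requiring care is the anti-diagonal bookkeeping that determines, for each neighbour of $(i,j)$, whether it lies in $\mu$, in $\lambda/\mu$, or off the board --- once that is settled, the $q$-Pochhammer ratio closes the argument.
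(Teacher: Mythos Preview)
Your proof is correct. The paper actually states Lemma~\ref{ok20} without proof, so there is no ``paper's own proof'' to compare against; your direct verification --- splitting into $i+j\geq r+3$ (where $\pi_{ij}$ is absent from $\widetilde W_{\lambda,\mu}$) and $i+j=r+2$ (where it appears only as the upper entry of the two $q$-binomials) --- is exactly the computation one expects and all the anti-diagonal bookkeeping is right. The paper does prove the general-$\mu$ analogue later (Lemma~\ref{genlemma1}, Statement~1) by instead tracking how the exponent $D_{\lambda,\mu}$ changes under $L_{\sigma_{ij}}$ via casework on whether $(i\pm1,j\mp1)$ lie in $\mu$ or $\lambda/\mu$; for the staircase $\mu=\delta_{r+1}$ that exponent is unaffected (as you observed), so the two organizations collapse to the same $q$-binomial ratio.
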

We are now ready to prove this subsection's result.
\begin{proof}[Proof of Theorem \ref{bigdriftlessresult}]
    We first define $\Lambda$ to operate on functions $F : \Pi^{\lambda}_{\sigma} \to \mathbb{R}$ via
    \[
        (\Lambda F)(\sigma) = \sum_{\pi \in \Pi^{\lambda}_{\sigma}} \widetilde{W}_{\lambda,\mu}(\pi;q) F(\pi).
    \]
    Let $C(\mu)$ denote the set of points $(i,j) \in \mu$ for which $(i+1,j) \not\in \mu$ and $(i,j+1) \not\in \mu$. Define $H^{\lambda/\mu} = G^{\lambda/\mu} + V_{\lambda,\mu}$ where
    \[
        V_{\lambda,\mu}(\sigma;q)=\sum_{(i,j) \in C(\mu)} (1-q^{\sigma_{i+1,j}})(1-q^{\sigma_{i,j+1}})
    \]
    Our first goal is to prove that $H^{\lambda/\mu}\circ \Lambda = \Lambda \circ G^{\lambda}$. Observe that
    \[
        \begin{split}
            (H^{\lambda/\mu} \Lambda F)(\sigma) &= \sum_{\pi \in \Pi^{\lambda}_{\sigma}} \bigg([H^{\lambda/\mu}\widetilde{W}_{\lambda,\mu}(\pi;q)]F(\pi)+\sum_{(i,j) \in \lambda/\mu} [L_{\sigma_{ij}}\widetilde{W}_{\lambda,\mu}(\pi;q)]\widetilde{b}_{ij}(\sigma;q) D_{\sigma_{ij}}F(\pi) \bigg) \\
            &= \sum_{\pi \in \Pi^{\lambda}_{\sigma}} \bigg([H^{\lambda/\mu}\widetilde{W}_{\lambda,\mu}(\pi;q)]F(\pi)+\widetilde{W}_{\lambda,\mu}(\pi;q)\sum_{(i,j) \in \lambda/\mu} \widetilde{b}_{ij}(\pi;q) D_{\sigma_{ij}}F(\pi) \bigg)
        \end{split}
    \]
    where the last line follows from Lemma \ref{ok20}. It remains to show that $H^{\lambda/\mu}\widetilde{W}_{\lambda,\mu}=(G^{\mu})' \widetilde{W}_{\lambda,\mu}$ where $(-)'$ denotes the formal adjoint with respect to the inner product 
    \[\langle f,g \rangle_{\mu} = \sum_{\pi \in \Pi^{\mu}} f(\pi)g(\pi).\]
    Note that $(-)'$ is different from the formal adjoints $(-)^*$ we took with respect to a $q$-inner product in Sections $3$ and $4$, because in this proof we have instead opted to absorb the extra $q$-factors into the weights $\widetilde{W}_{\lambda,\mu}(\pi;q)$ rather than the underlying inner product. Since $(G^{\mu})'f = \displaystyle \sum_{(i,j) \in \mu} B_{\pi_{ij}}[\widetilde{b}_{ij}(\pi;q)f]$ where $B_{k}=R_k-I$ is the forward difference operator, we must compute $B_{\pi_{ij}}[\widetilde{b}_{ij}(\pi;q)\widetilde{W}_{\lambda,\mu}(\pi;q)]$: For $(i,j) \in \mu$,
    \begin{equation}\label{ok30}
        R_{\pi_{ij}}[\widetilde{b}_{ij}(\pi;q) \widetilde{W}_{\lambda,\mu}(\pi;q)] = q^{\pi_{ij}-\pi_{i-1,j+1}}(1-q^{\pi_{i,j+1}-\pi_{ij}})(1-q^{\pi_{i+1,j}-\pi_{ij}})\widetilde{W}_{\lambda,\mu}(\pi;q)
    \end{equation}
    and so we set $b_{ij}'(\pi;q) := q^{\pi_{ij}-\pi_{i-1,j+1}}(1-q^{\pi_{i,j+1}-\pi_{ij}})(1-q^{\pi_{i+1,j}-\pi_{ij}})$. Thus,
    \[
        B_{\pi_{ij}}[\widetilde{b}_{ij}(\pi;q)\widetilde{W}_{\lambda,\mu}(\pi;q)]=[b_{ij}'(\pi;q)-\widetilde{b}_{ij}(\pi;q)]\widetilde{W}_{\lambda,\mu}(\pi;q).
    \]
    Moreover, one use Lemma \ref{ok20} to state
    \[
        \widetilde{b}_{ij}(\sigma;q) D_{\sigma_{ij}} \widetilde{W}_{\lambda,\mu}(\pi;q)=[\widetilde{b}_{ij}(\pi;q)-\widetilde{b}_{ij}(\sigma;q)]\widetilde{W}_{\lambda,\mu}(\pi;q).
    \]
    So the statement $H^{\lambda/\mu}\widetilde{W}_{\lambda,\mu}=(G^{\mu})' \widetilde{W}_{\lambda,\mu}$ is equivalent to
    \[
        \sum_{(i,j) \in \lambda/\mu} [\widetilde{b}_{ij}(\pi;q)-\widetilde{b}_{ij}(\sigma;q)] + V_{\lambda,\mu}(\sigma;q) = \sum_{(i,j) \in \mu} [b_{ij}'(\pi;q) - \widetilde{b}_{ij}(\pi;q)].
    \]
    For $\mu=\delta_{r+1}$ we define the set $\hat{\mu}=\mu \cup \{(i,0) \text{ }:\text{ }1 \leq i \leq r+1\} \cup \{(0,j) : 1 \leq j \leq r+1\} \cup \{(0,0)\}$, observing that $b_{ij}'(\pi;q) - \widetilde{b}_{ij}(\pi;q)=0$ for $(i,j) \in \hat{\mu}/ \mu$ and so
    \[\sum_{(i,j) \in \mu} [b_{ij}'(\pi;q)-\widetilde{b}_{ij}(\pi;q)]=\sum_{(i,j) \in \hat{\mu}}[b_{ij}'(\pi;q)-\widetilde{b}_{ij}(\pi;q)].\]
    The set $\hat{\mu}$ forms a Young diagram, and is not to be confused with the Young diagram $\widetilde{\mu}$. One can expand $b_{ij}'(\pi;q)-\widetilde{b}_{ij}(\pi;q)$ in order to write
    \[
        \sum_{(i,j) \in \hat{\mu}} [b_{ij}'(\pi;q) - \widetilde{b}_{ij}(\pi;q)]=(A)-(B)-(C)+(D)
    \]
    where
    \[(A)= \sum_{\substack{u=(i,j)\in \hat{\mu} \\ x=(i-1,j+1),y=(i+1,j-1)}} (q^{\pi_u-\pi_x}-q^{\pi_y-\pi_u})=0,\]
    \[\begin{split} (B)=\sum_{\substack{u = (i,j) \in \hat{\mu} \\ x=(i-1,j+1),y=(i+1,j-1) \\ v=(i,j+1),w=(i,j-1)}} (q^{\pi_v-\pi_x}-q^{\pi_y-\pi_w})&=\sum_{\substack{(i,j) \in \hat{\mu}\\ (i,j+1) \in \lambda/\mu \text{ or } (i+1,j) \in \lambda/\mu }} q^{\pi_{i,j+1}-\pi_{i-1,j+1}}-(r+2)\\
    &=1+q^{\pi_{1,r+1}}+\sum_{(i,j) \in \delta_{r+2}/\delta_{r+1}} q^{\pi_{i+1,j-1}-\pi_{i,j-1}}-1-(r+2)\end{split}\]
    \[\begin{split} (C)=\sum_{\substack{u=(i,j) \in \hat{\mu}, v=(i+1,j) \\ x=(i-1,j+1),y=(i+1,j-1)\\ w=(i-1,j)}} (q^{\pi_v-\pi_x}-q^{\pi_y-\pi_w})&=\sum_{\substack{(i,j) \in \hat{\mu} \\ (i,j+1) \in \lambda/\mu \text{ or } (i+1,j) \in \lambda/\mu}} q^{\pi_{i+1,j}-\pi_{i-1,j+1}}-(r+2)\\
    &=q^{\pi_{1,r+1}}+\sum_{(i,j) \in \delta_{r+2}/\delta_{r+1}} q^{\pi_{i+1,j-1}-\pi_{i-1,j}}-(r+2)\end{split},\]
    and
    \[\begin{split} (D)&=\sum_{\substack{(i,j) \in \hat{\mu}}} (q^{\pi_{i+1,j}+\pi_{i,j+1}-\pi_{ij}-\pi_{i-1,j+1}}-q^{\pi_{i+1,j-1}+\pi_{ij}-\pi_{i,j-1}-\pi_{i-1,j}})\\&=\sum_{\substack{(i,j) \in \hat{\mu} \\ (i,j+1) \in \lambda/\mu \text{ or } (i+1,j) \in \lambda/\mu}} q^{\pi_{i+1,j}+\pi_{i,j+1}-\pi_{ij}-\pi_{i-1,j+1}}-(r+2)\\
    &=q^{\pi_{1,r+1}}+\sum_{(i,j) \in \delta_{r+2}/\delta_{r+1}} q^{\pi_{i+1,j-1}+\pi_{ij}-\pi_{i,j-1}-\pi_{i-1,j}}\end{split}\]
    Therefore,
    \begin{equation}\label{ok23}\begin{split}
        \sum_{(i,j) \in \mu} [b_{ij}'(\pi;q) - \widetilde{b}_{ij}(\pi;q)] =& (r+2)-q^{\pi_{1,r+1}}\\&+\sum_{\substack{(i,j) \in \delta_{r+2}/\delta_{r+1} } }(q^{\pi_{i+1,j-1}+\pi_{ij}-\pi_{i,j-1}-\pi_{i-1,j}}- q^{\pi_{i+1,j-1}-\pi_{i,j-1}} -  q^{\pi_{i+1,j-1}-\pi_{i-1,j}})
    \end{split}
    \end{equation}
    Comparing \eqref{ok23} with \eqref{ok22} tells us that $\displaystyle \sum_{(i,j) \in \mu} [b_{ij}'(\pi;q) - \widetilde{b}_{ij}(\pi;q)]-\sum_{(i,j) \in \lambda/\mu} [\widetilde{b}_{ij}(\pi;q)-\widetilde{b}_{ij}(\sigma;q)]$ is equal to
    \[\begin{split} \sum_{\substack{(i,j) \in \lambda/\mu \\ (i-1,j) \not\in \lambda/\mu \text{ and } (i,j-1) \not\in \lambda/\mu}} (1-q^{\pi_{i+1,j-1}})(1-q^{\pi_{ij}})=V_{\lambda,\mu}(\sigma;q).
    \end{split}\]
    \indent This concludes our proof that $H^{\lambda/\mu} \circ \Lambda = \Lambda \circ G^{\lambda/\mu}$, and observe that in the process of proving this statement we have also shown that
    \[
        H^{\lambda/\mu}A_{\lambda,\mu}=0.
    \]
    Thus, 
    \[
        L^{\lambda/\mu}=A_{\lambda,\mu}^{-1} \circ H^{\lambda/\mu} \circ A_{\lambda,\mu}
    \]
    generates a Markov process on $\Pi^{\lambda/\mu}$. In fact, if we define
    \[
        (\widetilde{\Lambda}F)(\sigma) = \sum_{\pi \in \Pi^{\lambda}_{\sigma}} K^{\lambda,\mu}_{\sigma}(\pi;q) F(\pi)
    \]
    then
    \[
        L^{\lambda/\mu} \circ \widetilde{\Lambda} = \widetilde{\Lambda} \circ G^{\lambda}.
    \]
    Finally, this gives us the statement of the theorem, with $L^{\lambda/\mu}$ being the generator of $\sigma(t)$.
\end{proof}
\subsection{General Result}
\indent We now consider the case where $\mu$ is a fixed Young diagram, not necessarily of staircase shape, satisfying $\mu \subset \lambda^{\circ}$. We are again allowing the parameters $\alpha_1,\alpha_2,\dots$ to be arbitrary nonnegative integers. For the purposes of this subsection we will let $f_{ij}^+(\pi),f^-_{ij}(\pi), g_{ij}^+(\pi), g_{ij}^-(\pi), h_{ij}^+(\pi), h_{ij}^-(\pi),$ and $m_{ij}(\pi)$ be a collection of seven functions which will determine the rates of particles in various regions of $\lambda$, and by determining these functions we will be able to construct the stochastic dynamics on $\Pi^{\lambda,\alpha}$ which correspond to generator $G_{\mu}^{\lambda,\alpha}$ and weights $\widetilde{W}_{\lambda,\mu}(\pi,z;q)$ from Theorem \ref{bigtheorem}. \newline
\indent As usual, let $\Pi^{\lambda/\mu,\alpha}$ by the set of nonnegative integer arrays $(\sigma_{ij})_{(i,j) \in \lambda/\mu}$ for which $\sigma_{ij} \geq \mathrm{max}\{\sigma_{i,j-1},\sigma_{i-1,j}-\beta_{ij}\}$ holds. The set $\Pi^{\lambda,\alpha}$ will consist of non-negative integer arrays $(\pi_{ij})_{(i,j) \in \lambda}$ for which $\pi_{ij} \geq \mathrm{max}\{\pi_{i,j-1},\pi_{i-1,j}-\beta_{ij}\}$ holds. Moreover, let $\Pi^{\lambda,\alpha}_{\sigma}$ be the set consisting of $\pi \in \Pi^{\lambda,\alpha}$ for which $\pi|_{\lambda/\mu}=\sigma$. We first recall from Subsection $1.2$ the weights of arrays $\pi \in \Pi^{\lambda,\alpha}_{\sigma}$ for $\sigma \in \Pi^{\lambda/\mu,\alpha}$:
\begin{equation}\label{ok28}\begin{split}
    &\widetilde{W}_{\lambda,\mu}(\pi,z;q)=\\&q^{D_{\lambda,\mu}(\pi,z;q)} \prod_{(i,j) \in \mu} z_i^{\pi_{ij}}\binom{\pi_{ij}}{\pi_{i,j-1}}_q \frac{(q)_{\pi_{ij}}}{(q)_{\pi_{i-1,j}}(qz_{i-1,i+j-1})_{\pi_{ij}-\pi_{i-1,j}}} \prod_{\substack{(i,j) \in \mu\\ (i,j+1) \in \lambda/\mu }} \binom{\pi_{i,j+1}}{\pi_{ij}}_q \prod_{\substack{(i,j) \in \mu \\ (i+1,j) \in \lambda/\mu}} \frac{(qz_{i,i+j})_{\pi_{i+1,j}}}{(q)_{\pi_{i,j}}(qz_{i,i+j})_{\pi_{i+1,j}-\pi_{ij}}}
    \end{split}
\end{equation}
where 
\begin{equation}\begin{split}\label{ok29} 
D_{\lambda,\mu}(\pi,z;q)= \sum_{(i,j) \in \mu} \pi_{ij}^2 - \sum_{\substack{(i,j) \in \mu \\ (i+1,j-1) \in \mu}} \pi_{ij}\pi_{i+1,j-1} - \sum_{\substack{(i,j) \in \lambda/\mu \\ (i+1,j-1) \in \mu}} \pi_{ij}\pi_{i+1,j-1}-\sum_{\substack{(i,j) \in \mu\\(i+1,j-1) \in \lambda/\mu}} \pi_{ij}\pi_{i+1,j-1}\end{split}\end{equation}
For $u,v \in \lambda$ we say $u \sim_{\mu} v$ if $u,v \not\in \lambda/\mu$ or $u,v \in \lambda/\mu$. We use the convention that $u \sim_{\mu} v$ is always true if $u \in (\mathbb{N}\times \{0\}) \cup (\{0\} \times \mathbb{N})$ or $v\in (\mathbb{N}\times \{0\}) \cup (\{0\} \times \mathbb{N})$.\newline
\indent For $(i,j) \in \lambda$ write \[\begin{split} \widetilde{b}_{ij}(\pi,z;q) =&z_{i,\ell(\lambda)}(1-q^{\pi_{ij}-\pi_{i,j-1}})(1-z_{i-1,i+j-1}q^{\pi_{ij}-\pi_{i-1,j}})\times\\&\begin{cases} q^{\pi_{i+1,j-1}-\pi_{ij}} &\text{ if } (i-1,j+1) \sim_{\mu} (i,j) \sim_{\mu} (i+1,j-1), (i,j) \not\in \mathrm{Vert}_{\mu}(\lambda)\\
q^{\pi_{i+1,j-1}-\pi_{ij}}q^{m_{ij}(\pi)},&\text{ if }(i-1,j+1) \sim_{\mu} (i,j) \sim_{\mu} (i+1,j-1), (i,j) \in \mathrm{Vert}_{\mu}(\lambda)\\ q^{\pi_{i+1,j-1}-\pi_{ij}}q^{f_{ij}^+(\pi)},&\text{ if } (i-1,j+1) \sim_{\mu} (i,j) \not\sim_{\mu} (i+1,j-1),(i,j) \in \lambda/\mu\\
q^{\pi_{i+1,j-1}-\pi_{ij}}q^{f_{ij}^-(\pi)}, &\text{ if } (i-1,j+1) \sim_{\mu} (i,j) \not\sim_{\mu} (i+1,j-1),(i,j) \not\in \lambda/\mu\\ 
q^{\pi_{i-1,j+1}-\pi_{ij}}q^{g_{ij}^+(\pi)}, &\text{ if } (i-1,j+1) \not\sim_{\mu} (i,j) \sim_{\mu} (i+1,j-1), (i,j) \in \lambda/\mu\\ q^{\pi_{i+1,j-1}-\pi_{ij}}q^{g_{ij}^-(\pi)}, &\text{ if } (i-1,j+1) \not\sim_{\mu} (i,j) \sim_{\mu} (i+1,j-1), (i,j) \not\in \lambda/\mu \\ q^{\pi_{i+1,j-1}+\pi_{i-1,j+1}-\pi_{ij}}q^{h_{ij}^+(\pi)}, &\text{ if } (i-1,j+1) \not\sim_{\mu} (i,j) \not\sim_{\mu} (i+1,j-1), (i,j) \in\lambda/\mu\\
q^{\pi_{i+1,j-1}-\pi_{ij}}q^{h_{ij}^-(\pi)}, &\text{ if } (i-1,j+1) \not\sim_{\mu} (i,j) \not\sim_{\mu} (i+1,j-1), (i,j) \not\in \lambda/\mu\end{cases}\end{split}\]
and for $(i,j) \in \lambda/\mu$ write \[\begin{split}\widetilde{b}_{ij}(\sigma,z;q)=&z_{i,\ell(\lambda)}(1-q^{\sigma_{ij}-\sigma_{i,j-1}})(1-z_{i-1,i+j-1}q^{\sigma_{ij}-\sigma_{i-1,j}})\times \\&\begin{cases} q^{\sigma_{i+1,j-1}-\sigma_{ij}}, &\text{ if } (i-1,j+1) \sim_{\mu} (i,j) \sim_{\mu} (i+1,j-1), (i,j) \not\in \mathrm{Vert}_{\mu}(\lambda) \\q^{\sigma_{i+1,j-1}-\sigma_{ij}}q^{m_{ij}(\sigma)}, &\text{ if }(i-1,j+1) \sim_{\mu} (i,j) \sim_{\mu} (i+1,j-1), (i,j) \in \mathrm{Vert}_{\mu}(\lambda)\\ q^{\sigma_{i+1,j-1}-\sigma_{ij}}q^{f_{ij}^+(\sigma)}, &\text{ if } (i-1,j+1) \sim_{\mu} (i,j) \not\sim_{\mu} (i+1,j-1) \\ q^{\sigma_{i-1,j+1}-\sigma_{ij}}q^{g_{ij}^+(\sigma)}, &\text{ if } (i-1,j+1) \not\sim_{\mu} (i,j) \sim_{\mu} (i+1,j-1)\\q^{\sigma_{i+1,j-1}+\sigma_{i-1,j+1}-\sigma_{ij}}q^{h_{ij}^+(\sigma)}, &\text{ if } (i-1,j+1) \not\sim_{\mu} (i,j) \not\sim_{\mu} (i+1,j-1)\end{cases}\end{split}\] 
To get the main result from the introduction we specialize to
\[h_{ij}^-(\pi)=f_{ij}^-(\pi)=f_{ij}^+(\pi)=g_{ij}^-(\pi)=0,\]
\[g_{ij}^+(\pi)=\pi_{i,j-1}-\pi_{i-1,j+1}-\beta_{i+1,j-1},\]
\[h_{ij}^+(\pi)=-\pi_{i-1,j+1},\]
and
\[m_{ij}(\pi)=\pi_{i,j-1}-\pi_{i+1,j-1}-\beta_{i+1,j-1}.\]
Define the set 
\[\mathrm{Hor}_{\mu}(\lambda):= \{(i,j) \in \mu : (i-1,j+1) \in \lambda/\mu\}\]
Write $\displaystyle A_{\lambda,\mu}(\sigma,z;q)=\sum_{\pi \in \Pi^{\lambda}_{\sigma}} \widetilde{W}_{\lambda,\mu}(\pi,z;q)$ with
\[H^{\lambda,\mu,\alpha}= G^{\lambda/\mu,\alpha}+V_{\lambda,\mu}\]
for \begin{equation} G^{\lambda/\mu,\alpha}= \displaystyle \sum_{(i,j) \in \lambda/\mu} \widetilde{b}_{ij}(\sigma,z;q) D_{\sigma_{ij}}\end{equation}
and
\begin{equation}\label{final6}\begin{split}
    V_{\lambda,\mu}(\sigma,z;q) &= \sum_{\substack{(i,j) \in C(\mu) \backslash \mathrm{Hor}_{\mu}(\lambda)}} z_{i,\ell(\lambda)}z_{i-1,i+j}(1-q^{\sigma_{i,j+1}})(1-q^{\sigma_{i+1,j}})\\
    &+\sum_{\substack{(i,j) \in C(\mu) \cap \mathrm{Hor}_{\mu}(\lambda)}} z_{i,\ell(\lambda)}z_{i-1,i+j}q^{-\pi_{i-1,j+1}}(1-q^{\sigma_{i,j+1}})(1-q^{\sigma_{i+1,j}})\\
    &+\sum_{\substack{1 \leq i \leq \ell(\mu)\\ (i,\mu_i) \not\in \mathrm{Hor}_{\mu}(\lambda)}} z_{i-1,\ell(\lambda)} (1-z_{i,i+\mu_i})(1-q^{\sigma_{i,\mu_i+1} })\\
    &+ \sum_{\substack{1 \leq i \leq \ell(\mu) \\(i,\mu_i) \in \mathrm{Hor}_{\mu}(\lambda)}}z_{i-1,\ell(\lambda)}q^{-\sigma_{i-1,\mu_i+1}}(1-z_{i,i+\mu_i})(1-q^{\sigma_{i,\mu_i+1}})
\end{split}
\end{equation}
Lastly, the actual Markov process on $\Pi^{\lambda,\alpha}$ here is the one with generator
\begin{equation}
    G^{\lambda,\alpha}_{\mu}=\sum_{(i,j) \in \lambda} \widetilde{b}_{ij}(\pi,z;q)D_{\pi_{ij}}.
\end{equation}
We can now restate Theorem \ref{bigtheorem}:
\begin{theorem}\label{bigresult}
    Fix a Young diagram $\lambda$. Suppose $\pi(t), t\geq 0$ is a Markov process on $\Pi^{\lambda,\alpha}$ with generator $G^{\lambda,\alpha}_{\mu}$ and initial law $K_{\sigma}^{\lambda,\mu}$ for some $\mu \subset \lambda^{\circ}$ with $\sigma \in \Pi^{\lambda/\mu,\alpha}$. Then $\sigma(t) = \pi(t)|_{\lambda/\mu}$ is a Markov process on $\Pi^{\lambda/\mu,\alpha}$ with generator 
    \begin{equation}\label{anotherok}
        L^{\lambda/\mu,\alpha}=A_{\lambda,\mu}^{-1} \circ H^{\lambda/\mu,\alpha} \circ A_{\lambda,\mu}. 
    \end{equation}
    For $t > 0$, the conditional law of $\pi(t)$ given $\{\sigma(s), s \leq t\}$ is $K^{\lambda,\mu}_{\sigma(t)}$.
\end{theorem}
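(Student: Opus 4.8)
The plan is to prove Theorem \ref{bigresult} exactly as we proved Theorem \ref{bigdriftlessresult}: establish an intertwining relation and then invoke the Markov function criterion of Rogers and Pitman (Equation $(1)$ in \cite{pitman}). Let $\Lambda$ act on functions $F:\Pi^{\lambda,\alpha}_{\sigma}\to\mathbb{R}$ by $(\Lambda F)(\sigma)=\sum_{\pi\in\Pi^{\lambda,\alpha}_{\sigma}}\widetilde{W}_{\lambda,\mu}(\pi,z;q)F(\pi)$, and let $\widetilde{\Lambda}$ be its normalized version with kernel $K^{\lambda,\mu}_{\sigma}$. The goal is to show
\[
H^{\lambda/\mu,\alpha}\circ\Lambda=\Lambda\circ G^{\lambda,\alpha}_{\mu}.
\]
Applying both sides to the constant function $1$ (using $\Lambda 1=A_{\lambda,\mu}$ and $G^{\lambda,\alpha}_{\mu}1=0$) then gives $H^{\lambda/\mu,\alpha}A_{\lambda,\mu}=0$, so the Doob transform $L^{\lambda/\mu,\alpha}=A_{\lambda,\mu}^{-1}\circ H^{\lambda/\mu,\alpha}\circ A_{\lambda,\mu}$ is a bona fide generator on $\Pi^{\lambda/\mu,\alpha}$ (nonnegativity of its off-diagonal rates follows from $\widetilde{b}_{ij}(\sigma,z;q)\geq 0$ together with positivity of the ratios $A_{\lambda,\mu}(\sigma-e_{ij},z;q)/A_{\lambda,\mu}(\sigma,z;q)$), and conjugating the intertwining by $A_{\lambda,\mu}$ upgrades it to $L^{\lambda/\mu,\alpha}\circ\widetilde{\Lambda}=\widetilde{\Lambda}\circ G^{\lambda,\alpha}_{\mu}$, which is precisely the hypothesis of the Rogers--Pitman criterion and yields the stated conditional law $K^{\lambda,\mu}_{\sigma(t)}$.

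The intertwining reduces to two ingredients. First, for each $(i,j)\in\lambda/\mu$ one needs the local balance identity $\widetilde{b}_{ij}(\sigma,z;q)\,L_{\sigma_{ij}}\widetilde{W}_{\lambda,\mu}(\pi,z;q)=\widetilde{b}_{ij}(\pi,z;q)\,\widetilde{W}_{\lambda,\mu}(\pi,z;q)$, the analogue of Lemma \ref{ok20}; this is the statement whose proof is deferred to Appendix $A$, verified by tracking how each $q$-Pochhammer and $q$-binomial factor of \eqref{ok28}, and the quadratic exponent \eqref{ok29}, transform under $\pi_{ij}\mapsto\pi_{ij}-1$ across the boundary configurations encoded by the $\sim_{\mu}$ relation and the sets $\mathrm{Vert}_{\mu}(\lambda)$ and $\mathrm{Hor}_{\mu}(\lambda)$. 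Granting it, the $\lambda/\mu$-part of $G^{\lambda,\alpha}_{\mu}$ passes through $\Lambda$ to produce exactly the $\sigma$-dynamics $G^{\lambda/\mu,\alpha}$ acting after $\Lambda$. Second, for $(i,j)\in\mu$ one computes the forward shift $R_{\pi_{ij}}[\widetilde{b}_{ij}(\pi,z;q)\widetilde{W}_{\lambda,\mu}(\pi,z;q)]=b'_{ij}(\pi,z;q)\widetilde{W}_{\lambda,\mu}(\pi,z;q)$ for an explicit $b'_{ij}$ — here the exponent $D_{\lambda,\mu}$ is engineered so that the $q^{\pi_{i+1,j-1}-\pi_{ij}}$-type prefactor re-centers correctly — so that the $\mu$-part of $G^{\lambda,\alpha}_{\mu}$, conjugated by $\Lambda$, becomes the counting-measure formal adjoint $(G^{\mu})'\widetilde{W}_{\lambda,\mu}=\sum_{(i,j)\in\mu}B_{\pi_{ij}}[\widetilde{b}_{ij}(\pi,z;q)\widetilde{W}_{\lambda,\mu}(\pi,z;q)]$.

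Combining the two ingredients, $H^{\lambda/\mu,\alpha}\circ\Lambda=\Lambda\circ G^{\lambda,\alpha}_{\mu}$ is equivalent to the scalar identity
\[
\sum_{(i,j)\in\lambda/\mu}\big[\widetilde{b}_{ij}(\pi,z;q)-\widetilde{b}_{ij}(\sigma,z;q)\big]+V_{\lambda,\mu}(\sigma,z;q)=\sum_{(i,j)\in\mu}\big[b'_{ij}(\pi,z;q)-\widetilde{b}_{ij}(\pi,z;q)\big],
\]
the drifted, general-shape generalization of the identity obtained by comparing \eqref{ok22} with \eqref{ok23} in the warm-up. I expect this identity to be the main obstacle: unlike the staircase case, the boundary of $\lambda/\mu$ now carries horizontal segments (the points of $\mathrm{Hor}_{\mu}(\lambda)$) as well as vertical ones (the points of $\mathrm{Vert}_{\mu}(\lambda)$), so the eight cases in the definition of $\widetilde{b}_{ij}(\pi,z;q)$ — governed by whether $(i-1,j+1)$, $(i,j)$, $(i+1,j-1)$ lie on the same side of the barrier — must each be expanded and matched. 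The strategy is to enlarge $\mu$ to a Young diagram $\hat{\mu}$ by adjoining the boundary rows and columns (as in the warm-up, where $b'_{ij}-\widetilde{b}_{ij}$ vanishes on $\hat{\mu}\setminus\mu$), expand $\sum_{\hat{\mu}}[b'_{ij}-\widetilde{b}_{ij}]$ into monomials, telescope along rows and anti-diagonals so that the bulk contributions cancel in pairs, and check that the leftover terms localized at the corner set $C(\mu)$ and along the $\mathrm{Hor}_{\mu}(\lambda)$ and $\mathrm{Vert}_{\mu}(\lambda)$ components are exactly the four sums constituting $V_{\lambda,\mu}(\sigma,z;q)$ in \eqref{final6}; the specialized choices of $f^{\pm}_{ij},g^{\pm}_{ij},h^{\pm}_{ij},m_{ij}$ are dictated precisely by the requirement that this matching go through. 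Once the scalar identity is established, the remaining steps — $H^{\lambda/\mu,\alpha}A_{\lambda,\mu}=0$, well-definedness of $L^{\lambda/\mu,\alpha}$, and the appeal to \cite{pitman} — are formal, as in Theorems \ref{markovfunction} and \ref{bigdriftlessresult}.
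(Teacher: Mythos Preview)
Your overall architecture is exactly the paper's: reduce to the intertwining $H^{\lambda/\mu,\alpha}\circ\Lambda=\Lambda\circ G^{\lambda,\alpha}_{\mu}$, split into a local balance identity on $\lambda/\mu$ and a forward-shift computation on $\mu$, arrive at the scalar identity
\[
\sum_{(i,j)\in\lambda/\mu}[\widetilde b_{ij}(\pi,z;q)-\widetilde b_{ij}(\sigma,z;q)]+V_{\lambda,\mu}=\sum_{(i,j)\in\mu}[b'_{ij}(\pi,z;q)-\widetilde b_{ij}(\pi,z;q)],
\]
and finish with Rogers--Pitman. That much is right.

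You have, however, inverted the difficulty and misidentified what goes in the appendix. The local balance identity (your first ingredient) is \emph{not} what is deferred to Appendix~A; in the paper it is Lemma \ref{genlemma1}, Statement~1, and its proof is a short four-case computation of $L_{\sigma_{ij}}D_{\lambda,\mu}-D_{\lambda,\mu}$ done in the main text. What is deferred to Appendix~A is Lemma \ref{genlemma2}, the verification of the scalar identity above --- and this is the actual obstacle, not a routine afterthought.

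Your proposed attack on the scalar identity has a genuine gap. You write that one should ``enlarge $\mu$ to a Young diagram $\hat\mu$ by adjoining the boundary rows and columns (as in the warm-up, where $b'_{ij}-\widetilde b_{ij}$ vanishes on $\hat\mu\setminus\mu$)''. That vanishing held in the warm-up only because $\mu=\delta_{r+1}$ forced the extension cells to lie on the zeroth row and column, where $\pi\equiv 0$ by convention. For a general $\mu$ no such vanishing occurs: the cells added in $\hat\mu\setminus\mu$ sit inside $\lambda/\mu$ at positive indices and carry nonzero $\pi$-values. The paper therefore uses a different, iterative construction $\mu=\mu^{(0)}\subset\mu^{(1)}\subset\cdots\subset\hat\mu$, adding hooks at each step until $\hat\mu$ is a staircase, and then does \emph{not} claim the extension contributes nothing. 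Instead it decomposes the right-hand side as $\sum_{\hat\mu}=\sum_{\mu}+\sum_{\hat\mu/\mu}$, defines two auxiliary quantities $T_{\lambda,\mu}$ (the $\hat\mu$-sum, computed by telescoping over the staircase) and $U_{\lambda,\mu}$ (essentially the negative of the $\hat\mu/\mu$-sum plus the left-hand boundary discrepancy), and proves $T_{\lambda,\mu}+U_{\lambda,\mu}=V_{\lambda,\mu}$ by a long hook-by-hook bookkeeping over the decomposition $\mathcal O_1=\mathcal O_1^0\cup\mathcal O_1^+\cup\mathcal O_1^-\cup\mathcal O_1^\bullet$ together with the exceptional corners $\widetilde C(\mu)$. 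Your telescoping intuition is right in spirit, but the mechanism you describe --- a single enlargement with vanishing on the fringe --- does not survive the passage from staircase to arbitrary $\mu$, and the specialized choices of $f^\pm,g^\pm,h^\pm,m$ are dictated by making the hook-by-hook matching in Appendix~A close up, not merely by the local balance identity.
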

We state a lemma which generalizes certain facts from Subsection $5.1$,
\begin{lemma}\label{genlemma1}
The following are true:
\begin{enumerate}
    \item For $\pi \in \Pi^{\lambda}_{\sigma}$ and $(i,j) \in \lambda/\mu$,
    \begin{equation}
        \widetilde{b}_{ij}(\sigma,z;q)L_{\sigma_{ij}}\widetilde{W}_{\mu}(\pi,z;q) = \widetilde{b}_{ij}(\pi,z;q)
    \end{equation}
    \item For $(i,j) \in \mu$,
    \[\begin{split} R_{\pi_{ij}}[\widetilde{b}_{ij}\widetilde{W}_{\lambda,\mu}]=
\widetilde{W}_{\lambda,\mu}z_{i-1,\ell(\lambda)}(&1-q^{\pi_{i,j+1}-\pi_{ij}})(1-z_{i,i+j}q^{\pi_{i+1,j}-\pi_{ij}})\times\\&\begin{cases} q^{\pi_{ij}-\pi_{i-1,j+1}}, &\text{ if } (i-1,j+1) \sim_{\mu} (i,j) \sim_{\mu} (i+1,j-1)\\
q^{\pi_{ij}-\pi_{i-1,j+1}}q^{R_{\pi_{ij}}f_{ij}^-(\pi)}, &\text{ if }(i-1,j+1) \sim_{\mu} (i,j) \not\sim_{\mu} (i+1,j-1)\\
q^{\pi_{ij}-\pi_{i-1,j+1}}q^{R_{\pi_{ij}}g_{ij}^-(\pi)}, &\text{ if }(i-1,j+1) \not\sim_{\mu} (i,j) \sim_{\mu} (i+1,j-1)\\
q^{\pi_{ij}-\pi_{i-1,j+1}}q^{R_{\pi_{ij}}h_{ij}^-(\pi)}, &\text{ if }(i-1,j+1) \not\sim_{\mu} (i,j) \not\sim_{\mu} (i+1,j-1)
\end{cases}        \end{split}\]
\end{enumerate}
\end{lemma}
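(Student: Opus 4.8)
The plan is to establish both identities as purely local computations at the single cell $(i,j)$, generalizing Lemma \ref{ok20} and \eqref{ok30} from Subsection $5.1$; the main new feature relative to the $q=1$ situation of \cite{neil} is that $\widetilde W_{\lambda,\mu}(\pi,z;q)$ now carries the extra $q$-quadratic factor $q^{D_{\lambda,\mu}(\pi,z;q)}$, so one must also track how this exponent shifts. Every manipulation reduces to the three elementary identities $(q)_{k}/(q)_{k-1}=1-q^{k}$, $(a;q)_{k}/(a;q)_{k-1}=1-aq^{k-1}$, and $\binom{k}{m}_q/\binom{k-1}{m}_q=(1-q^{k})/(1-q^{k-m})$.

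\emph{Part (1).} Fix $(i,j)\in\lambda/\mu$, so $\pi_{ij}=\sigma_{ij}$; the goal is $\widetilde b_{ij}(\sigma,z;q)\,L_{\sigma_{ij}}\widetilde W_{\lambda,\mu}(\pi,z;q)=\widetilde b_{ij}(\pi,z;q)\,\widetilde W_{\lambda,\mu}(\pi,z;q)$, the natural $q$-analogue of Lemma \ref{ok20}. First I would identify which factors of \eqref{ok28} involve $\pi_{ij}$. Since $\mu$ is a Young diagram, neither $(i,j+1)$ nor $(i+1,j)$ lies in $\mu$, so these are: the anti-diagonal cross-terms of $D_{\lambda,\mu}$, contributing $q^{-\pi_{ij}\pi_{i+1,j-1}}$ iff $(i+1,j-1)\in\mu$ and $q^{-\pi_{ij}\pi_{i-1,j+1}}$ iff $(i-1,j+1)\in\mu$; the $q$-binomial $\binom{\pi_{ij}}{\pi_{i,j-1}}_q$ from the product over $\{(i,j-1)\in\mu,(i,j)\in\lambda/\mu\}$, present iff $(i,j-1)\in\mu$; and the block $\frac{(qz_{i-1,i+j-1})_{\pi_{ij}}}{(q)_{\pi_{i-1,j}}(qz_{i-1,i+j-1})_{\pi_{ij}-\pi_{i-1,j}}}$ from the product over $\{(i-1,j)\in\mu,(i,j)\in\lambda/\mu\}$, present iff $(i-1,j)\in\mu$. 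Using the three identities, $L_{\sigma_{ij}}\widetilde W_{\lambda,\mu}/\widetilde W_{\lambda,\mu}$ collapses to a short product of $(1-q^{\pi_{ij}-\pi_{i,j-1}})/(1-q^{\pi_{ij}})$, $(1-z_{i-1,i+j-1}q^{\pi_{ij}-\pi_{i-1,j}})/(1-z_{i-1,i+j-1}q^{\pi_{ij}})$ and monomials in $q$. I would then match this against $\widetilde b_{ij}(\pi,z;q)/\widetilde b_{ij}(\sigma,z;q)$ by running through the five forms of $\widetilde b_{ij}(\sigma,z;q)$ — indexed by whether $(i-1,j+1)\sim_\mu(i,j)$, whether $(i,j)\sim_\mu(i+1,j-1)$, and whether $(i,j)\in\mathrm{Vert}_\mu(\lambda)$; in each case the $q$-power prefactor of $\widetilde b_{ij}$, through the choices of $m_{ij}$, $f^{+}_{ij}$, $g^{+}_{ij}$, $h^{+}_{ij}$ and the $z$-weights, supplies exactly the monomial needed, using that neighbours still in $\lambda/\mu$ carry equal $\pi$- and $\sigma$-values while neighbours in $\mu$ are precisely those at which $\widetilde b_{ij}(\sigma,\cdot)$ is evaluated with value $0$.

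\emph{Part (2).} Now fix $(i,j)\in\mu$ and carry out the same computation for $R_{\pi_{ij}}$. More factors of \eqref{ok28} now involve $\pi_{ij}$: the $\pi_{ij}^2$ term and both anti-diagonal cross-terms of $D_{\lambda,\mu}$; the weight $z_i^{\pi_{ij}}$; the diagonal block $\binom{\pi_{ij}}{\pi_{i,j-1}}_q\frac{(q)_{\pi_{ij}}}{(q)_{\pi_{i-1,j}}(qz_{i-1,i+j-1})_{\pi_{ij}-\pi_{i-1,j}}}$ indexed by $(i,j)$ itself; the factor $\binom{\pi_{i,j+1}}{\pi_{ij}}_q$, present whether $(i,j+1)$ is in $\mu$ or in $\lambda/\mu$ (just from a different product); and $\frac{1}{(q)_{\pi_{ij}}(qz_{i,i+j})_{\pi_{i+1,j}-\pi_{ij}}}$, likewise present in all cases. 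I would apply $R_{\pi_{ij}}$ inside $\widetilde b_{ij}(\pi,z;q)$, compute $R_{\pi_{ij}}\widetilde W_{\lambda,\mu}/\widetilde W_{\lambda,\mu}$, and multiply. The two key cancellations mirror \eqref{ok30}: the shifted factor $(1-q^{\pi_{ij}+1-\pi_{i,j-1}})$ of $\widetilde b_{ij}$ combines with the $\binom{\pi_{i,j+1}}{\pi_{ij}}_q$-ratio (or the $(q)_{\pi_{ij}}$ in the denominator) to give $(1-q^{\pi_{i,j+1}-\pi_{ij}})$, and $(1-z_{i-1,i+j-1}q^{\pi_{ij}+1-\pi_{i-1,j}})$ combines with the $(qz_{i,i+j})_{\pi_{i+1,j}-\pi_{ij}}$-ratio to give $(1-z_{i,i+j}q^{\pi_{i+1,j}-\pi_{ij}})$, while the four occurrences of $(1-q^{\pi_{ij}+1})$ cancel in pairs. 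Finally the shift $\pi_{ij}^2\mapsto(\pi_{ij}+1)^2$, the weight $z_i^{\pi_{ij}}$, the surviving cross-terms, and the identity $z_{i,\ell(\lambda)}z_i=z_{i-1,\ell(\lambda)}$ assemble into the prefactor $z_{i-1,\ell(\lambda)}q^{\pi_{ij}-\pi_{i-1,j+1}}$, with the corrections $q^{R_{\pi_{ij}}f^{-}_{ij}}$, $q^{R_{\pi_{ij}}g^{-}_{ij}}$, $q^{R_{\pi_{ij}}h^{-}_{ij}}$ attached in the three non-$\sim_\mu$ cases exactly as in $\widetilde b_{ij}$; this is the ``mirror-cell'' version of \eqref{ok30}, the reflection $(i,j-1)\leftrightarrow(i,j+1)$, $(i-1,j)\leftrightarrow(i+1,j)$ acting on the jump rate.

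The main obstacle, I expect, is organizational rather than algebraic: one must keep straight, cell by cell, which of the four products in \eqref{ok28} contains a given $\pi_{ij}$, i.e.\ which of $(i,j\pm1)$, $(i\pm1,j)$, $(i\pm1,j\mp1)$ lie in $\mu$ versus $\lambda/\mu$. This is precisely what the relation $\sim_\mu$ and the sets $\mathrm{Vert}_\mu(\lambda)$, $\mathrm{Hor}_\mu(\lambda)$ are there to bookkeep, and the functions $f^{\pm}_{ij}$, $g^{\pm}_{ij}$, $h^{\pm}_{ij}$, $m_{ij}$ are chosen so that, once the case division is set up, each case collapses to a one-line application of the three $q$-identities. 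The genuinely new point beyond \cite{neil} is that $D_{\lambda,\mu}$ changes by a linear amount under $L_{\sigma_{ij}}$ and $R_{\pi_{ij}}$, producing the extra monomials in $q$ that the prefactors of $\widetilde b_{ij}$ are tuned to absorb.
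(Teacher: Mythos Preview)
Your proposal is correct and follows essentially the same approach as the paper: both proofs are local computations at the single cell $(i,j)$, with the key step being the case-by-case evaluation of how the quadratic exponent $D_{\lambda,\mu}$ changes under $L_{\sigma_{ij}}$ (respectively $R_{\pi_{ij}}$) according to whether $(i\pm1,j\mp1)$ lie in $\mu$ or $\lambda/\mu$. The paper's proof in fact records only those shifts of $D_{\lambda,\mu}$ and leaves the matching of the $q$-binomial and $q$-Pochhammer factors implicit, so your write-up is, if anything, more complete.
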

\begin{proof}
    We begin with Statement $1$. Recall that
    \[D_{\lambda,\mu}(\pi,z;q) = \sum_{(i,j) \in \mu} \pi_{ij}^2 - \sum_{\substack{(i,j) \in \mu \\ (i+1,j-1) \in \mu}} \pi_{ij}\pi_{i+1,j-1} - \sum_{\substack{(i,j) \in \lambda/\mu \\ (i+1,j-1) \in \mu}} \pi_{ij}\pi_{i+1,j-1}-\sum_{\substack{(i,j) \in \mu\\(i+1,j-1) \in \lambda/\mu}} \pi_{ij}\pi_{i+1,j-1} \]
    If $(i,j) \in \lambda/\mu$ then there are four cases:
    \begin{enumerate}
        \item If $(i+1,j-1) \in \mu, (i-1,j+1) \in \mu$ then
        \[L_{\sigma_{ij}}D_{\lambda,\mu}(\pi,z;q)-D_{\lambda,\mu}(\pi,z;q) = \pi_{i+1,j-1}-\pi_{i-1,j+1}.\]
        \item If $(i+1,j-1) \in \lambda/\mu, (i-1,j+1) \in \mu$ then
        \[L_{\sigma_{ij}}D_{\lambda,\mu}(\pi,z;q)-D_{\lambda,\mu}(\pi,z;q)=\pi_{i-1,j+1}\]
        \item If $(i+1,j-1) \in \mu, (i-1,j+1) \in \lambda/\mu$ then
        \[L_{\sigma_{ij}}D_{\lambda,\mu}(\pi,z;q)-D_{\lambda,\mu}(\pi,z;q)=\pi_{i+1,j-1}\]
        \item If $(i+1,j-1) \in \lambda/\mu, (i-1,j+1) \in \lambda/\mu$ then
        \[L_{\sigma_{ij}}D_{\lambda,\mu}(\pi,z;q)- D_{\lambda,\mu}(\pi,z;q) = 0\]
    \end{enumerate}
    Putting all of the above cases together we see that the following holds for $(i,j) \in \lambda/\mu$,
    \[L_{\sigma_{ij}}D_{\lambda,\mu}(\pi,z;q)-D_{\lambda,\mu}(\pi,z;q)=\pi_{i+1,j-1}-\sigma_{i+1,j-1}+\pi_{i-1,j+1}-\sigma_{i-1,j+1}.\]
    \indent We now move on to Statement $2$. By performing similar casework as above, we can see that for $(i,j) \in \mu$ we have
    \[R_{\sigma_{ij}}D_{\lambda,\mu}(\pi,z;q)-D_{\lambda,\mu}=2\pi_{ij}-\pi_{i+1,j-1}-\pi_{i-1,j+1}+1.\]
\end{proof}
\indent We now proceed with the proof of our main theorem,
\begin{proof}[Proof of Theorem \ref{bigresult}]
    Similar to Theorem \ref{bigdriftlessresult}, we can first remark that Lemma \ref{genlemma1} implies
    \[(H^{\lambda/\mu,\alpha}\Lambda F)(\sigma)=\sum_{\pi \in \Pi^{\lambda}_{\sigma}} \bigg([H^{\lambda/\mu,\alpha} \widetilde{W}_{\lambda,\mu}(\pi,z;q)]F(\pi) + \widetilde{W}_{\lambda,\mu}(\pi,z;q) \sum_{(i,j) \in \lambda/\mu} \widetilde{b}_{ij}(\pi,z;q) D_{\sigma_{ij}}F(\pi)\bigg).\]
    Thus the statement $H^{\lambda/\mu,\alpha} \circ \Lambda=\Lambda \circ G^{\lambda,\alpha}$ reduces to proving \[H^{\lambda/\mu,\alpha} \widetilde{W}_{\lambda,\mu} = z_{\ell(\mu),\ell(\lambda)} \bigg(\sum_{(i,j) \in \mu} \widetilde{b}_{ij}(\pi,z;q)D_{\pi_{ij}}\bigg)' \widetilde{W}_{\lambda,\mu}\] where $(-)'$ denotes the same formal adjoint we used in Theorem \ref{bigdriftlessresult}.\newline
    \indent Due to statement $2$ of Lemma \ref{genlemma1}, and $f_{ij}^-=g_{ij}^-=h_{ij}^-=0$, we set $b_{ij}'(\pi;z,q) = z_{i-1,\ell(\lambda)}q^{\pi_{ij}-\pi_{i-1,j+1}}(1-q^{\pi_{i,j+1-\pi_{ij}}})(1-z_{i,i+j}q^{\pi_{i+1,j}-\pi_{ij}})$ for all $(i,j) \in \mathbb{N}^2$. Once again, the theorem reduces to proving 
    \begin{equation}\label{ok31}\sum_{(i,j) \in \lambda/\mu} [\widetilde{b}_{ij}(\pi,z;q) - \widetilde{b}_{ij}(\sigma,z;q)] + V_{\lambda,\mu}(\sigma,z;q) = \sum_{(i,j) \in \mu} [b_{ij}'(\pi,z;q) - \widetilde{b}_{ij}(\pi,z;q)].\end{equation}
    In Theorem \ref{bigdriftlessresult} we introduced a Young diagram $\hat{\mu}$ by adding an $(r+1,r+1)$-hook to $\mu$. Here we will use a more delicate construction of $\mu$, which applies to non-staircase $\mu$, which will aid us in proving the theorem. Let $\mu^{(0)}=\mu$, 
    \begin{equation}
        \mu^{(1)}=\mu \cup \{(i,j) \in \lambda: (i,j) \not\in \mu, (i+1,j-1) \in \mu\} \cup \{(i,j) \in \lambda: (i,j) \not\in \mu, (i-1,j+1) \in \mu\} \cup \{(0,0)\},
    \end{equation}
    and for every $k \geq 1$ set 
    \begin{equation}
        \mu^{(k+1)}:=\mu^{(k)}\cup\{(i,j) \not\in\mu^{(k)}: (i+1,j-1) \in \mu^{(k)}\} \cup \{(i,j) \not\in\mu^{(k)}: (i-1,j+1) \in \mu^{(k)}\}.
    \end{equation}
    At every $k \geq 0$, $\mu^{(k+1)}$ adds hook shapes to $\mu^{(k)}$. Since $\mu$ is finite, there exists finite $k_0>0$ for which $\mu^{(k_0)}=\mu^{(k)}$ for all $k \geq k_0$, and $\mu^{(k_0)}$ is a staircase shape. Let $\hat{\mu}$ denote the largest staircase shape containing both $\mu^{(k_0)}$ and $\lambda$. We take the convention $\mu^{(k)}=\mu$ for $k \leq 0$. We state a lemma which we prove in Appendix $A$:
    \begin{lemma}\label{genlemma2} There exist $T_{\lambda,\mu}(\pi,z;q)$ and $U_{\lambda,\mu}(\pi,z;q)$
       such that
            \begin{equation}
            T_{\lambda,\mu}(\pi,z;q) = \sum_{(i,j) \in \hat{\mu}} [b_{ij}'(\pi,z;q) - \widetilde{b}_{ij}(\pi,z;q)],
        \end{equation}
        \begin{equation}
            \sum_{(i,j) \in \mathbb{N}^2/\mu}[\widetilde{b}_{ij}(\pi,z;q) - \widetilde{b}_{ij}(\sigma,z;q)]+U_{\lambda,\mu}(\pi,z;q) = -\sum_{(i,j) \in \hat{\mu}/\mu} [b_{ij}'(\pi,z;q) - \widetilde{b}_{ij}(\pi,z;q)],
        \end{equation}
    and $T_{\lambda,\mu}(\pi,z;q) + U_{\lambda,\mu}(\pi,z;q) = V_{\lambda,\mu}(\sigma,z;q)$ hold.
    \end{lemma}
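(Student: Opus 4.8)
The plan is to prove this in Appendix A by two explicit telescoping computations, one producing $T_{\lambda,\mu}$ and one producing $U_{\lambda,\mu}$, and then to observe that their sum collapses to $V_{\lambda,\mu}$. Granting the two displayed identities, $T_{\lambda,\mu}+U_{\lambda,\mu}$ is forced to equal $\sum_{(i,j)\in\mu}[b_{ij}'(\pi,z;q)-\widetilde{b}_{ij}(\pi,z;q)]-\sum_{(i,j)\in\mathbb{N}^2/\mu}[\widetilde{b}_{ij}(\pi,z;q)-\widetilde{b}_{ij}(\sigma,z;q)]$, since the $\sum_{\hat\mu/\mu}$ contributions cancel between the two; and this is precisely \eqref{ok31}. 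So the role of $T_{\lambda,\mu}$ and $U_{\lambda,\mu}$ is to split \eqref{ok31} into two separately computable pieces, using the auxiliary staircase $\hat\mu$ as scaffolding. As in the proof of Theorem \ref{bigdriftlessresult}, the sum over $\mathbb{N}^2/\mu$ is genuinely finite: $\widetilde{b}_{ij}(\pi)=\widetilde{b}_{ij}(\sigma)$ whenever every entry of $\pi$ appearing in $\widetilde{b}_{ij}$ lies in $\lambda/\mu$, so only a bounded frame around the boundary of $\mu$ contributes, and the terms with $(i,j)\notin\lambda$ vanish by the zero convention; this makes $U_{\lambda,\mu}$ a well-defined function of $(\pi,z;q)$.

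To compute $T_{\lambda,\mu}=\sum_{(i,j)\in\hat\mu}[b_{ij}'(\pi,z;q)-\widetilde{b}_{ij}(\pi,z;q)]$ I would expand exactly as in Lemma \ref{somelemma1} and Theorem \ref{bigdriftlessresult}: after substituting the chosen values $f^\pm_{ij}=g^-_{ij}=h^-_{ij}=0$, $g^+_{ij}(\pi)=\pi_{i,j-1}-\pi_{i-1,j+1}-\beta_{i+1,j-1}$, $h^+_{ij}(\pi)=-\pi_{i-1,j+1}$, $m_{ij}(\pi)=\pi_{i,j-1}-\pi_{i+1,j-1}-\beta_{i+1,j-1}$, each $b_{ij}'$ and each of the eight cases of $\widetilde{b}_{ij}(\pi)$ becomes a $\mathbb{Z}[z^{\pm 1}]$-combination of four monomials $q^{(\text{affine in the }\pi\text{'s})}$, which one organizes into families $(A),(B),(C),(D)$ of the same shape as in Theorem \ref{bigdriftlessresult}. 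Because $\hat\mu$ is a staircase, the $(A)$-family telescopes to $0$ along each anti-diagonal (the endpoint terms matching by the zero convention as in that proof), and the $(B),(C),(D)$ families partially telescope along rows and columns; one may equivalently run this as an induction over the hook-shaped layers $\mu^{(k+1)}/\mu^{(k)}$. The survivors are supported on the outer border of $\hat\mu$ together with an inner frame near $\partial\mu$ involving $C(\mu)$, the outer row $\{(i,\mu_i)\}$ of $\mu$, $\mathrm{Hor}_{\mu}(\lambda)$ and $\mathrm{Vert}_{\mu}(\lambda)$, and reading them off gives the closed form of $T_{\lambda,\mu}$. The computation of $\sum_{(i,j)\in\mathbb{N}^2/\mu}[\widetilde{b}_{ij}(\pi)-\widetilde{b}_{ij}(\sigma)]$, hence of $U_{\lambda,\mu}$, proceeds the same way; here the point of the special choices above is that the extra factors $q^{g^+_{ij}}$, $q^{h^+_{ij}}$, $q^{m_{ij}}$ occur exactly at the points where an edge $(i-1,j+1)\to(i,j)$ or $(i,j)\to(i+1,j-1)$ crosses the boundary of $\lambda/\mu$, respectively at $\mathrm{Vert}_{\mu}(\lambda)$, and they are engineered so that, once $\pi|_{\lambda/\mu}=\sigma$ is imposed, the corrected monomials are precisely the ones left unmatched by the naive telescoping of the plain $\widetilde{b}$'s across the barrier.

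Finally I would add the two closed forms. The border-of-$\hat\mu$ survivors cancel between $T_{\lambda,\mu}$ and $U_{\lambda,\mu}$, and the remaining frame monomials recombine---via the identity $2q^{a}-q^{a+b}=-(1-q^{a})(1-q^{b})+1+(q^{a}-q^{b})$ from the proof of Lemma \ref{somelemma1}, together with its $z$-weighted version---into exactly the four sums over $C(\mu)\setminus\mathrm{Hor}_{\mu}(\lambda)$, $C(\mu)\cap\mathrm{Hor}_{\mu}(\lambda)$, $\{i:(i,\mu_i)\notin\mathrm{Hor}_{\mu}(\lambda)\}$ and $\{i:(i,\mu_i)\in\mathrm{Hor}_{\mu}(\lambda)\}$ that constitute $V_{\lambda,\mu}(\sigma,z;q)$ in \eqref{final6}. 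Checking that these index sets are the ones that actually appear---that every surviving monomial is carried by one of these four frame contributions and nothing else---is the last thing to verify.

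The main obstacle is the bookkeeping in the two middle steps: one must track, case by case among the eight forms of $\widetilde{b}_{ij}(\pi)$ and the five forms of $\widetilde{b}_{ij}(\sigma)$, which of the four monomials survives the telescoping and against which boundary term it is to be matched, while correctly threading the drift weights $z_{i,\ell(\lambda)}$, $z_{i-1,i+j-1}$, $z_{i,i+j}$ through every index shift---a shift $i\mapsto i-1$ along a diagonal simultaneously re-weights via $z_{i-1,\ell(\lambda)}=z_iz_{i,\ell(\lambda)}$, and similarly for the others. This is exactly the computation that pins down the values of $g^+_{ij}$, $h^+_{ij}$ and $m_{ij}$: any other choice leaves an unbalanced monomial at the barrier. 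I expect the contributions coming from $\mathrm{Hor}_{\mu}(\lambda)$ and $\mathrm{Vert}_{\mu}(\lambda)$, where two distinct corrections interact at a single site, to be the subtlest part of the case analysis.
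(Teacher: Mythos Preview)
Your plan is correct in spirit and matches the paper's overall strategy: define $T$ and $U$ by the two displayed equalities, then show their sum is $V_{\lambda,\mu}$; this is indeed equivalent to \eqref{ok31}, and the auxiliary staircase $\hat\mu$ is used exactly as scaffolding for telescoping. Your remark that only a bounded frame near $\partial\mu$ contributes, and that the specific values of $g^+_{ij},h^+_{ij},m_{ij}$ are forced by the requirement that no monomial be left unmatched at the barrier, is also on target.

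Where your outline and the paper's execution differ is organizational. You propose to run the $(A),(B),(C),(D)$ expansion of Theorem~\ref{bigdriftlessresult} directly on $\widetilde b$ and $b'$. The paper instead introduces the ``plain'' rates $\hat b_{ij},\hat b'_{ij}$ (the rates with all correction functions $f^\pm,g^\pm,h^\pm,m$ set to zero) and splits both $T$ and $-U$ into a $\hat b$-part plus a correction part; see \eqref{theUdef} and \eqref{theTdef}. The $\hat b$-parts telescope cleanly over staircase regions as in \eqref{generalstaircasestuff}, while the correction parts are supported only on the hooks $F_{ij}\cup G_{ij}$ abutting $\partial\mu$. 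These hooks are then classified by a set $\mathcal{O}_1=\mathcal{O}_1^0\cup\mathcal{O}_1^+\cup\mathcal{O}_1^-\cup\mathcal{O}_1^\bullet$ according to how adjacent hooks glue, together with ``exceptional'' corners $\widetilde C(\mu)$, and the residual computation is packaged into local quantities $\Phi_{ij}^{(a)},\Psi_{ij}^{(b)},\Xi_{ij}^{(ab)}$ and block sums $\mathcal B_{ij}$. The payoff of this extra structure is that the interaction between neighbouring hooks (the cases you flag as ``subtlest'', where $\mathrm{Hor}_\mu(\lambda)$ and $\mathrm{Vert}_\mu(\lambda)$ meet) becomes a single boundary identity, \eqref{final4}, rather than a sprawling case analysis. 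Your $(A),(B),(C),(D)$ scheme would in principle arrive at the same place, but with drifts present the $z$-weights spoil the clean anti-diagonal telescoping of the $(A)$-family, and the $\hat b$-split is precisely the device that restores it.
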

    Observe that
    \[\sum_{(i,j) \in \mathbb{N}^2\backslash \mu}[\widetilde{b}_{ij}(\pi,z;q)-\widetilde{b}_{ij}(\sigma,z;q)]= \sum_{(i,j) \in \lambda/\mu} [\widetilde{b}_{ij}(\pi,z;q)-\widetilde{b}_{ij}(\sigma,z;q)]\]
    and
    \[\bigg(\sum_{(i,j) \in \tilde{\mu}}-\sum_{(i,j) \in \tilde{\mu}/\mu}\bigg)[b_{ij}'(\pi,z;q)-\widetilde{b}_{ij}(\pi,z;q)]=\sum_{(i,j) \in \mu}[b_{ij}'(\pi,z;q)-\widetilde{b}_{ij}(\pi,z;q)],\]
    and so
    \begin{equation}
        \sum_{(i,j) \in\lambda/\mu} [\widetilde{b}_{ij}(\pi,z;q)-\widetilde{b}_{ij}(\sigma,z;q)]+(T_{\lambda,\mu}(\pi,z;q)+U_{\lambda,\mu}(\pi,z;q))=\sum_{(i,j) \in \mu}[b_{ij}'(\pi,z;q)-\widetilde{b}_{ij}(\pi,z;q)].
    \end{equation}
    Since $T_{\lambda,\mu}(\pi,z;q) + U_{\lambda,\mu}(\pi,z;q) = V_{\lambda,\mu}(\sigma,z;q)$ we have completed our proof that $H^{\lambda/\mu,\alpha}\circ \Lambda = \Lambda \circ G^{\lambda,\alpha}$.
\end{proof}
\begin{appendices}
    \section{Proof of Lemma $5.6$}
    \indent To prove this lemma we must consider the hook shapes embedded in $\lambda$. We refer to $F \subset \lambda$ a hook with origin $(i,j) \in \lambda$, vertical length $\ell$, and horizontal length $\omega$ if $F=\{(i+a-1,j+b-1) : (a,b) \in (\{1\}\times\{1,\dots,\ell\}) \cup (\{1,\dots,\omega\} \times \{1\})\}$. Note that the Young diagram $F=(\ell,1^{\omega-1})$ is a hook with origin $(1,1)$, vertical length $\ell$, and horizontal length $\omega$. A hook is called degenerate if $\ell=1$ or $\omega=1$, and nondegenerate otherwise.
\begin{proof}[Proof of Lemma \ref{genlemma2}]
    As a disclaimer, all of our computations involving hooks will first deal with nondegenerate hooks. This is not because of any inherent technical difficulty, it is only to keep the indexing manageable. Once we complete our calculations for nondegenerate hooks we will make a brief remark on how the same calculations hold for degenerate hooks, with only slight differences in indexing appearing. \newline
    \indent For $k \geq 0$, every $(i,j) \in \mu^{(k+1)}/\mu^{(k)}$ belongs to a unique hook shape $F_{ij} \subset \mu^{(k+1)}/\mu^{(k)}$ which we will describe in relation to another hook shape $G_{ij} \subset \mu^{(k)}$. Due to $(i,j) \in \mu^{(k+1)}/\mu^{(k)}$ we must have $(i-1,j-1) \in \mu^{(k)}$, let $G_{ij}$ be the largest hook in $\mu^{(k)}$ which contains the point $(i-1,j-1)$. Denote the vertical length of $G_{ij}$ by $\ell_{ij}+1$ and the horizontal length of $G_{ij}$ by $\omega_{ij}+1$. Define $F_{ij}$ to be the hook containing $(i,j)$ with vertical length $\ell_{ij}$ and horizontal length $\omega_{ij}$. \newline
    \indent It is useful to think of the hooks $F_{ij}$ and $G_{ij}$ as being embedded in larger hooks $\hat{F}_{ij},\hat{G}_{ij} \subset \hat{\mu}$. Let $\hat{F}_{ij}$ be the largest hook in $\hat{\mu}$ which contains $F_{ij}$, and let $\hat{G}_{ij}$ be the largest hook in $\hat{\mu}$ which contains $G_{ij}$. We will denote the vertical length of $\hat{F}_{ij}$ by $\hat{\ell}_{ij}$ and the horizontal length of $\hat{F}_{ij}$ by $\hat{\omega}_{ij}$, noting that the staircase shape of $\hat{\mu}$ guarantees that $\hat{\ell}_{ij}=\hat{\omega}_{ij}$.\newline
    \indent In order to decompose $\hat{\mu}/\mu^{\circ}$ into hook shapes and pair these hooks together, it will be convenient to define the set $\mathcal{O}_1$ of $(i,j) \in \mu^{(1)}/\mu$ for which $(i,j)$ is the origin of the hook $F_{ij}$. We may classify points in $\mathcal{O}_1$ by how their hooks $G_{ij} \subset \mu$ glue together with other hooks $G_{i',j'} \subset \mu$ for $(i',j')\neq (i,j)$. Specifically, we let
    \[\mathcal{O}_1 = \mathcal{O}_1^{0	} \cup \mathcal{O}_1^{+} \cup \mathcal{O}_1^{-} \cup \mathcal{O}_1^{\bullet} \]
    where $(i,j) \in \mathcal{O}_1^0$ when $(i-2,j+\ell_{ij}+1),(i+\omega_{ij}+1,j-2) \in \mu$, we say $(i,j) \in \mathcal{O}_1^+$ if $(i-2,j+\ell_{ij}+1)\in \lambda/\mu,(i+\omega_{ij}+1,j-2) \in \mu$, we say $(i,j) \in \mathcal{O}_1^-$ if $(i-2,j+\ell_{ij}+1) \in \mu,(i+\omega_{ij}+1,j-2) \in \lambda/\mu$, and finally $(i,j) \in \mathcal{O}_1^{\bullet}$ if $(i-2,j+\ell_{ij}+1),(i+\omega_{ij}+1,j-2) \in \lambda/\mu$. Figure \ref{fig:fig4} depicts the different classes of hooks we are dealing with. \begin{figure}[h]
    \centering
    \captionsetup{justification=centering}
    \includegraphics[scale=0.4]{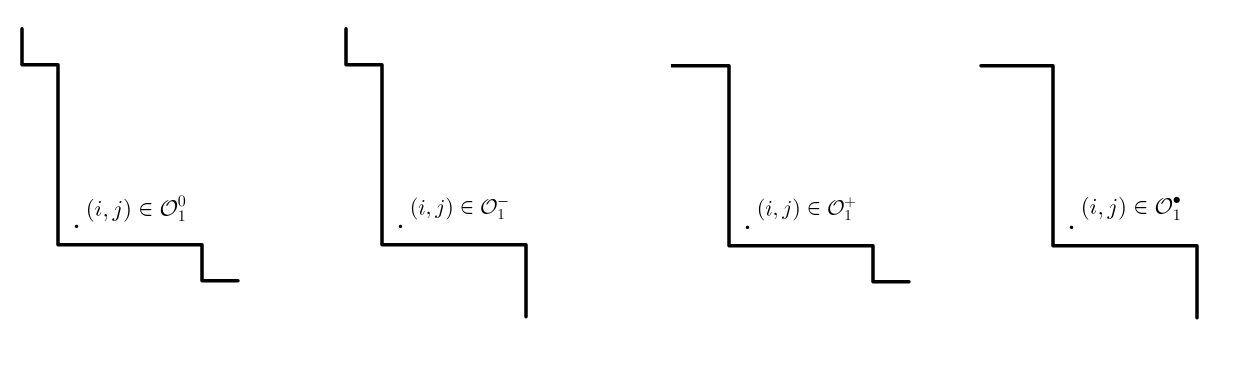}
    \caption{The different possible $G_{ij}$ shapes for $(i,j) \in \mathcal{O}_1$.}
    \label{fig:fig4}
\end{figure}\newline
    \indent For a hook $F$ with origin $(i,j)$, vertical length $\ell$, and horizontal length $\omega$ we let $\mathscr{S}(F)$ denote the staircase generated by $F$ and define it as the set $\mathscr{S}(F)=\{(a,b) \in \llbracket i,i+\omega_{ij}-1\rrbracket \times \llbracket j,j+\ell-1\rrbracket : a+b \leq i+j+\ell-1\}$. Let $C(\mu)=\{(i,j)\in\mu:(i+1,j)\not\in\mu,(i,j+1)\not\in\mu\}$ be the set of external corners and
    \[\widetilde{C}(\mu)=\{(i,j) \in C(\mu): (i,j) \not\in \hat{G}_{i',j'} \text{ for all } (i',j') \in \mathcal{O}_{1}\}\]
    the set of \emph{exceptional} external corners. For each $(i,j) \in \widetilde{C}(\mu)$ we set $\widetilde{F}_{ij}=\{(i+1,j)\}$ if $(i-1,j+1),(i+1,j-1) \in \widetilde{C}(\mu)$, and $\widetilde{F}_{ij}=\{(i,j+1),(i+1,j)\}$ if $(i-1,j+1)\not\in \widetilde{C}(\mu)$ or $(i+1,j-1)\not\in \widetilde{C}(\mu)$. Let $S_{ij}=\{(a,b) \in \hat{\mu}: a=i \text{ or } b=j\}\backslash \{(i,j)\}$ and we take the convention that $\widetilde{F}_{ij}=\emptyset$ for $(i,j)\not\in \widetilde{C}(\mu)$, we also define the regions $\displaystyle \mathscr{S}_b=\bigcup_{(i,j) \in \mathcal{O}_1} \mathscr{S}(\hat{F}_{ij})$ and $\displaystyle \mathscr{S}_e=\bigcup_{(i,j) \in \widetilde{C}(\mu)} S_{ij}$. This allows us to write
    \[\begin{split} -U_{\lambda,\mu}(\pi,z;q) &=\sum_{v \in \mathscr{S}_b \cup \mathscr{S}_e} [b_v'(\pi,z;q)-\widetilde{b}_v(\sigma,z;q)]\\&= \sum_{v \in \mathscr{S}_b \cup \mathscr{S}_e} [\hat{b}_v'(\pi,z;q) - \hat{b}_v(\pi,z;q)] + \sum_{v \in \bigcup_{(i,j) \in \mathcal{O}_1} \hat{F}_{ij} \cup \bigcup_{(i,j) \in \widetilde{C}(\mu)}\widetilde{F}_{ij}} [\widetilde{b}_v(\pi,z;q)-\widetilde{b}_v(\sigma,z;q)] \\
    &+\sum_{v \in \mathscr{S}_b \cup \mathscr{S}_e}\bigg([b_v'(\pi,z;q)-\widetilde{b}_v(\pi,z;q)]-[\hat{b}_v'(\pi,z;q)-\hat{b}_v(\pi,z;q)]\bigg)
    \end{split}\]
    where $\hat{b}_{ij}(\pi,z;q):=z_{i,\ell(\lambda)}q^{\pi_{i+1,j-1}-\pi_{ij}}(1-q^{\pi_{ij}-\pi_{i,j-1}})(1-z_{i-1,i+j-1}q^{\pi_{ij}-\pi_{i-1,j}})$ and \[\hat{b}'_{ij}(\pi,z;q):=z_{i-1,\ell(\lambda)}q^{\pi_{ij}-\pi_{i-1,j+1}}(1-q^{\pi_{i,j+1}-\pi_{ij}})(1-z_{i,i+j}q^{\pi_{i+1,j}-\pi_{ij}}).\] Calculations analogous to the ones we did in Subsection $5.1$ give the values
    \begin{equation}\begin{split}\label{generalstaircasestuff}
        &\sum_{v \in \mathscr{S}(\hat{G}_{ij})} [\hat{b}_v'(\pi,z;q)-\hat{b}_v(\pi,z;q)]=\\& \sum_{\substack{v=(a,b) \in \mathscr{S}(\hat{G}_{ij})\\ a+b = i+j+\hat{\ell}_{ij}-1} } [z_{a-1,\ell(\lambda)}z_{a,a+b}q^{\pi_{a+1,b}+\pi_{a,b+1}-\pi_{ab}-\pi_{a-1,b+1}}-z_{a-1,\ell(\lambda)}q^{\pi_{a,b+1}-\pi_{a-1,b+1}}-z_{a-1,\ell(\lambda)}z_{a,a+b} q^{\pi_{a+1,b}-\pi_{a-1,b+1}}]\\
        &+\sum_{a=i-1}^{i+\hat{\omega}_{ij}}z_{a,\ell(\lambda)}[z_{a-1,a+j-2}q^{\pi_{a+1,j-2}-\pi_{a-1,j-1}}+q^{\pi_{a+1,j-1}-\pi_{a,j-1}}\mathbbm{1}_{a \neq i+\hat{\omega}_{ij}}+q^{\pi_{a+1,j-2}-\pi_{a,j-2}}-q^{\pi_{a+1,j-2}-\pi_{a,j-1}}\\&-z_{a-1,a+j-2}q^{\pi_{a,j-1}-\pi_{a,j-2}+\pi_{a+1,j-2}-\pi_{a-1,j-1}}]
        +\sum_{b=j-1}^{j+\hat{\ell}_{ij}} z_{i-2,\ell(\lambda)}[q^{\pi_{i-1,b}-\pi_{i-2,b+1}}-q^{\pi_{i-1,b+1}-\pi_{i-2,b+1}}\mathbbm{1}_{b \neq j+\hat{\ell}_{ij}}],
    \end{split}\end{equation}
        \[\begin{split}&\sum_{(i,j) \in \hat{\mu}}[\hat{b}_{ij}'(\pi,z;q)-\hat{b}_{ij}(\pi,z;q)]=\sum_{a} z_{a,\ell(\lambda)}+\\&\sum_{\substack{(i,j) \in \hat{\mu}\\(i,j+1) \not\in \hat{\mu} \text{ or }(i+1,j) \not\in \hat{\mu}}} [z_{i-1,\ell(\lambda)}z_{i,i+j}q^{\pi_{i+1,j}+\pi_{i,j+1}-\pi_{ij}-\pi_{i-1,j+1}}-z_{i-1,\ell(\lambda)}q^{\pi_{i,j+1}-\pi_{i-1,j+1}}-z_{i-1,\ell(\lambda)}z_{i,i+j} q^{\pi_{i+1,j}-\pi_{i-1,j+1}}]
    \end{split}\]
    for the above telescoping sums. We assume that $(i,j) \in \mathcal{O}_1$ is a nondegenerate hook until we say otherwise. We will make further remarks about this later in this appendix. To understand $T_{\lambda,\mu}$ we can start by writing
    \[\begin{split}
        T_{\lambda,\mu} &= \sum_{(i,j) \in \hat{\mu}}[\hat{b}_{ij}'(\pi,z;q)-\hat{b}_{ij}(\pi,z;q)]+\sum_{(i,j) \in \hat{\mu}}\bigg([b'_{ij}(\pi,z;q)-\widetilde{b}_{ij}(\pi,z;q)]-[\hat{b}_{ij}'(\pi,z;q)-\hat{b}_{ij}(\pi,z;q)]\bigg)\\
        &=\sum_{(i,j) \in \hat{\mu}}[\hat{b}_{ij}'(\pi,z;q)-\hat{b}_{ij}(\pi,z;q)]+\sum_{(a,b) \in \mathscr{S}_b \cup \mathscr{S}_e}\bigg([b'_{ab}(\pi,z;q)-\widetilde{b}_{ab}(\pi,z;q)]-[\hat{b}_{ab}'(\pi,z;q)-\hat{b}_{ab}(\pi,z;q)]\bigg)+\\
        &\sum_{(a,b) \in \widetilde{C}(\mu)\cup \bigcup_{(i,j) \in \mathcal{O}_1}G_{ij}}\bigg([b'_{ab}(\pi,z;q)-\widetilde{b}_{ab}(\pi,z;q)]-[\hat{b}_{ab}'(\pi,z;q)-\hat{b}_{ab}(\pi,z;q)]\bigg).\end{split}\]
    From the above formulas for $U_{\lambda,\mu}$ and $T_{\lambda,\mu}$ we may deduce
    \begin{equation}\label{final5}\begin{split}
        &U_{\lambda,\mu}(\pi,z;q) + T_{\lambda,\mu}(\pi,z;q)=\sum_{a=-1}^{\hat{\omega}_{ij}} z_{a,\ell(\lambda)}-\sum_{v \in \mathscr{S}_b \cup \mathscr{S}_e\cup \widetilde{C}(\mu) \cup \bigcup_{(i,j) \in \mathcal{O}_1}G_{ij}}[\hat{b}_v'(\pi,z;q) - \hat{b}_v(\pi,z;q)]-\\&\sum_{v \in \bigcup_{(i,j) \in \mathcal{O}_1} \hat{F}_{ij} \cup \bigcup_{(i,j) \in \widetilde{C}(\mu)} \widetilde{F}_{ij}} [\widetilde{b}_v(\pi,z;q) - \widetilde{b}_v(\sigma,z;q)] +\sum_{(a,b) \in \widetilde{C}(\mu)\cup \bigcup_{(i,j) \in \mathcal{O}_1}G_{ij}}[b'_{ab}(\pi,z;q)-\widetilde{b}_{ab}(\pi,z;q)]+\\&\sum_{\substack{(i,j) \in \hat{\mu}\\(i,j+1) \not\in \hat{\mu} \text{ or }(i+1,j) \not\in \hat{\mu}}} [z_{i-1,\ell(\lambda)}z_{i,i+j}q^{\pi_{i+1,j}+\pi_{i,j+1}-\pi_{ij}-\pi_{i-1,j+1}}-z_{i-1,\ell(\lambda)}q^{\pi_{i,j+1}-\pi_{i-1,j+1}}-z_{i-1,\ell(\lambda)}z_{i,i+j} q^{\pi_{i+1,j}-\pi_{i-1,j+1}}].
    \end{split}\end{equation}
\begin{center}
\renewcommand{\arraystretch}{1.45}
\setlength{\tabcolsep}{0.6em}
\begin{table}
\begin{tabular}{|>{\centering\arraybackslash}m{18em}|m{30em}|}

\hline
\textbf{$U_{\lambda,\mu},\quad \quad\quad(i,j)\in\mathcal{O}_1$} & \\[2pt]
\hline

$\displaystyle 
\sum_{\substack{w=(a,b)\in F_{ij}\\ a\ge i+1}}
\left[\widetilde{b}_w(\pi,z;q)-\widetilde{b}_w(\sigma,z;q)\right]
$
&
$\displaystyle 
\sum_{a=i+1}^{i+\omega_{ij}-1}
z_{a,\ell(\lambda)} q^{-\pi_{aj}} q^{f_{aj}^+(\pi)}
(1-z_{a-1,a+j-1}q^{\pi_{aj}-\pi_{a-1,j}})
[
q^{\pi_{a+1,j-1}}(1-q^{\pi_{aj}-\pi_{a,j-1}})
-
q^{f_{aj}^+(\sigma)-f_{aj}^+(\pi)}(1-q^{\pi_{aj}})
]
$
\\
\hline

$\displaystyle 
\sum_{\substack{w=(a,b)\in F_{ij}\\ b\ge j+1}}
[\widetilde{b}_w(\pi,z;q)-\widetilde{b}_w(\sigma,z;q)]
$
&
$\displaystyle 
\sum_{b=j+1}^{j+\ell_{ij}-1}
z_{i,\ell(\lambda)} q^{-\pi_{ib}} q^{g_{ib}^+(\pi)}
(1-q^{\pi_{ib}-\pi_{i,b-1}})
[
q^{\pi_{i-1,b+1}}(1-z_{i-1,i+b-1}q^{\pi_{ib}-\pi_{i-1,b}})
-
q^{g_{ib}^+(\sigma)-g_{ib}^+(\pi)}
(1-z_{i-1,i+b-1} q^{\pi_{ib}})
]
$
\\
\hline

$\widetilde{b}_{ij}(\pi,z;q)-\widetilde{b}_{ij}(\sigma,z;q)$
&
$\displaystyle 
q^{-\pi_{ij}} q^{h_{ij}^+(\pi)}
[
q^{\pi_{i+1,j-1}+\pi_{i-1,j+1}}
(1-q^{\pi_{ij}-\pi_{i,j-1}})
(1-z_{i-1,i+j-1}q^{\pi_{ij}-\pi_{i-1,j}})
-
q^{h_{ij}^+(\sigma)-h_{ij}^+(\pi)}
(1-q^{\pi_{ij}})
(1-z_{i-1,i+j-1}q^{\pi_{ij}})
]
$
\\
\hline

$\widetilde{b}_{i,j+\ell_{ij}}(\pi,z;q)-\widetilde{b}_{i,j+\ell_{ij}}(\sigma,z;q)$
&
$\displaystyle 
z_{i,\ell(\lambda)} q^{m_{ij}(\pi)}
q^{\pi_{i-1,j+1}-\pi_{ij}}
(1-q^{\pi_{i,j+\ell_{ij}}-\pi_{i,j+\ell_{ij}-1}})
[
(1-z_{i-1,i+j_{\ell_{ij}}-1}q^{\pi_{i,j+\ell_{ij}}-\pi_{i-1,j+\ell_{ij}}})
-
q^{m_{ij}(\sigma)-m_{ij}(\pi)}
(1-z_{i-1,i+j_{\ell_{ij}}-1}q^{\pi_{i,j+\ell_{ij}}})
]
$
\\
\hline

$\widetilde{b}_{i+\omega_{ij},j}(\pi,z;q)-\widetilde{b}_{i+\omega_{ij},j}(\sigma,z;q)$
&
$\displaystyle 
z_{i,\ell(\lambda)} 
q^{\pi_{i+\omega_{ij}+1,j-1}-\pi_{i+\omega_{ij},j}}
(1-z_{i+\omega_{ij}-1,i+\omega_{ij}+j-1}q^{\pi_{i+\omega_{ij},j}-\pi_{i+\omega_{ij}-1,j}})
[
(1-q^{\pi_{i+\omega_{ij},j}-\pi_{i+\omega_{ij},j-1}})
-
(1-q^{\pi_{i+\omega_{ij},j}})
]
$
\\
\hline

 $T_{\lambda,\mu},\quad\quad\quad(i,j) \in \mathcal{O}_1^0$&\\ \hline
 $\displaystyle \sum_{\substack{w=(a,b) \in G_{ij}\\ a \geq i+1 }} [b_w'(\pi,z;q)-\widetilde{b}_w(\pi,z;q)]$ & $\displaystyle \sum_{a=i+1}^{i+\omega_{ij}} q^{g_{a,j-1}^-(\pi)}[z_{a-1,\ell(\lambda)}q^{\pi_{a,j-1}-\pi_{a-1,j}}q^{B_{\pi_{a,j-1}}g_{a,j-1}^-(\pi)}(1-q^{\pi_{aj}-\pi_{a,j-1}})(1-z_{a,a+j-1}q^{\pi_{a+1,j-1}-\pi_{a,j-1}})-z_{a,\ell(\lambda)}q^{\pi_{a+1,j-2}-\pi_{a,j-1}}(1-q^{\pi_{a,j-1}-\pi_{a,j-2}})(1-z_{a-1,a+j-2}q^{\pi_{a,j-1}-\pi_{a-1,j-1}})]$\\
  \hline
  $\displaystyle \sum_{\substack{w=(a,b) \in G_{ij}\\a=i-1}} [b_w'(\pi,z;q)-\widetilde{b}_w(\pi,z;q)]$ & $\displaystyle \sum_{b=j-1}^{j+\ell_{ij}}q^{f_{i-1,b}^-(\pi)}[z_{i-2,\ell(\lambda)}q^{\pi_{i-1,b}-\pi_{i-2,b+1}}q^{B_{\pi_{i-1,b}}f_{i-1,b}^-(\pi)}(1-q^{\pi_{i-1,b+1}-\pi_{i-1,b}})(1-z_{i-1,i+b-1}q^{\pi_{ib}-\pi_{i-1,b}})-z_{i-1,\ell(\lambda)}q^{\pi_{i,b-1}-\pi_{i-1,b}}(1-q^{\pi_{i-1,b}-\pi_{i-1,b-2}})(1-z_{i-1,i+b-2}q^{\pi_{i-1,b}-\pi_{i-2,b}})]$\\
  \hline
  $\displaystyle b_{i,j-1}'(\pi,z;q)-\widetilde{b}_{i,j-1}(\pi,z;q)$&$\displaystyle z_{i-1,\ell(\lambda)}q^{\pi_{i,j-1}-\pi_{i-1,j}}(1-q^{\pi_{ij}-\pi_{i,j-1}})(1-z_{i,i+j-1}q^{\pi_{i+1,j-1}-\pi_{i,j-1}})-z_{i,\ell(\lambda)}q^{\pi_{i+1,j-2}-\pi_{i,j-1}}(1-q^{\pi_{i,j-1}-\pi_{i,j-2}})(1-z_{i-1,i+j-2})+z_{i-2,\ell(\lambda)}[q^{\pi_{i-1,j-1}-\pi_{i-2,j}}-q^{\pi_{i-1,j}-\pi_{i-2,j}}]$\\
    \hline
    $\displaystyle b_{i-1,j}'(\pi,z;q)-\widetilde{b}_{i-1,j}$&$\displaystyle -[z_{i-2,\ell(\lambda)}q^{\pi_{i-1,j}-\pi_{i-2,j+1}}(1-q^{\pi_{i-1,j+1}-\pi_{i-1,j}})(1-z_{i-1,i+j-1}q^{\pi_{ij}-\pi_{i-1,j}})-z_{i-1,\ell(\lambda)}q^{\pi_{i,j-1}-\pi_{i-1,j}}(1-q^{\pi_{i-1,j}-\pi_{i-1,j-1}})(1-z_{i-2,i+j-2}q^{\pi_{i-1,j}-\pi_{i-2,j}})] +z_{i-2,\ell(\lambda)}[q^{\pi_{i-1,j}-\pi_{i-2,j+1}}-q^{\pi_{i-1,j+1}-\pi_{i-2,j+1}}]$\\
        \hline
    $T_{\lambda,\mu},\quad\quad\quad(i,j) \in \mathcal{O}_1^+$ & \\
    \hline
    $b_{i-1,j+\ell_{ij}}'(\pi,z;q)-\widetilde{b}_{i-1,j+\ell_{ij}}(\pi,z;q)$ &$[z_{i-2,\ell(\lambda)}q^{\pi_{i-1,j+\ell_{ij}}-\pi_{i-2,j+\ell_{ij}+1}}q^{B_{\pi_{i-1,j+\ell_{ij}}}h_{i-1,j+\ell_{ij}}^-(\pi)}(1-q^{\pi_{i-1,j+\ell_{ij}+1}-\pi_{i-1,j+\ell_{ij}}})(1-z_{i-1,i+j+\ell_{ij}-1}q^{\pi_{i,j+\ell_{ij}}-\pi_{i-1,j+\ell_{ij}}})-z_{i-1,\ell(\lambda)}q^{\pi_{i,j+\ell_{ij}-1}-\pi_{i-1,j+\ell_{ij}}}(1-q^{\pi_{i-1,j+\ell_{ij}}-\pi_{i-1,j+\ell_{ij}-1}})(1-z_{i-2,i+j+\ell_{ij}-2}q^{\pi_{i-1,j+\ell_{ij}}-\pi_{i-2,j+\ell_{ij}}})]q^{h_{i-1,j+\ell_{ij}}^-(\pi)}$\\
    \hline
    $T_{\lambda,\mu},\quad\quad\quad(i,j) \in \mathcal{O}_1^-$ & \\
    \hline
    $b_{i+\omega_{ij},j-1}'(\pi,z;q) - \widetilde{b}_{i+\omega_{ij},j-1}(\pi,z;q)$ & $[z_{i+\omega_{ij}-1,\ell(\lambda)}q^{B_{\pi_{i+\omega_{ij},j-1}}h_{i+\omega_{ij},j-1}^-(\pi)}q^{\pi_{i+\omega_{ij},j-1}-\pi_{i+\omega_{ij}-1,j}}(1-q^{\pi_{i+\omega_{ij},j}-\pi_{i+\omega_{ij},j-1}})(1-z_{i+\omega_{ij},i+\omega_{ij}+j-1}q^{\pi_{i+\omega_{ij}+1,j-1}-\pi_{i+\omega_{ij},j-1}})- z_{i+\omega_{ij},\ell(\lambda)}q^{\pi_{i+\omega_{ij}+1,j-2}-\pi_{i+\omega_{ij},j-1}}(1-q^{\pi_{i+\omega_{ij},j-1}-\pi_{i+\omega_{ij},j-2}})(1-z_{i+\omega_{ij}-1,i+\omega_{ij}+j-2}q^{\pi_{i+\omega_{ij},j-1}-\pi_{i+\omega_{ij}-1,j-1}})]q^{h_{i+\omega_{ij},j-1}^-(\pi)}$ \\
    \hline

\end{tabular}
\caption{Contributions to $U_{\lambda,\mu}$ and $T_{\lambda,\mu}$ coming from the hook at $(i,j) \in \mathcal{O}_1$.}
\end{table}
\end{center}
    \indent Combining terms from the second-to-last sum in \eqref{generalstaircasestuff} with the first row under $U_{\lambda,\mu},(i,j)\in\mathcal{O}_1$ in the above table and the first row under $T_{\lambda,\mu}, (i,j) \in \mathcal{O}_1^0$ in the above table defines the quantity
        \[\begin{split}
        &\Phi_{ij}^{(a)}(\pi,z;q):=z_{a,\ell(\lambda)}[z_{a-1,a+j-2}q^{\pi_{a+1,j-2}-\pi_{a-1,j-1}}+q^{\pi_{a+1,j-1}-\pi_{a,j-1}}+q^{\pi_{a+1,j-2}-\pi_{a,j-2}}-q^{\pi_{a+1,j-2}-\pi_{a,j-1}}\\
        &-z_{a-1,a+j-2}q^{\pi_{a,j-1}-\pi_{a,j-2}+\pi_{a+1,j-2}-\pi_{a-1,j-1}}]+z_{a,\ell(\lambda)}q^{-\pi_{aj}}q^{f_{aj}^+(\pi)}(1-z_{a-1,a+j-1}q^{\pi_{aj}-\pi_{a-1,j}})[q^{\pi_{a+1,j-1}}(1-q^{\pi_{aj}-\pi_{a,j-1}})\\&-q^{f_{aj}^+(\sigma)-f_{aj}^+(\pi)}(1-q^{\pi_{aj}})]-q^{g_{a,j-1}^-(\pi)}[z_{a-1,\ell(\lambda)}q^{\pi_{a,j-1}-\pi_{a-1,j}}q^{B_{\pi_{a,j-1}}g_{a,j-1}^-(\pi)}(1-q^{\pi_{aj}-\pi_{a,j-1}})(1-z_{a,a+j-1}q^{\pi_{a+1,j-1}-\pi_{a,j-1}})\\&-z_{a,\ell(\lambda)}q^{\pi_{a+1,j-2}-\pi_{a,j-1}}(1-q^{\pi_{a,j-1}-\pi_{a,j-2}})(1-z_{a-1,a+j-2}q^{\pi_{a,j-1}-\pi_{a-1,j-1}})]\\
        &=(z_{a,\ell(\lambda)}q^{\pi_{a+1,j-1}-\pi_{aj}}-z_{a-1,\ell(\lambda)}q^{\pi_{a,j-1}-\pi_{a-1,j}})+z_{a,\ell(\lambda)}[1-q^{-\pi_{aj}}+z_{a-1,a+j-1}q^{-\pi_{a-1,j}}]+z_{a-1,\ell(\lambda)}(1-z_{a,a+j-1})q^{\pi_{aj}-\pi_{a-1,j}}
    \end{split}\]
    for $a=i+1,\dots,i+\omega_{ij}-1$. Combining terms from the last sum in \eqref{generalstaircasestuff} with the second row under $U_{\lambda,\mu},(i,j)\in\mathcal{O}_1$ in the above table and the second row under $T_{\lambda,\mu}, (i,j) \in \mathcal{O}_1^0$ in the above table defines the quantity
    \[\begin{split}
        &\Psi_{ij}^{(b)}(\pi,z;q):=
        z_{i,\ell(\lambda)}q^{-\pi_{ib}}q^{g_{ib}^+(\pi)}(1-q^{\pi_{ib}-\pi_{i,b-1}})[q^{\pi_{i-1,b+1}}(1-z_{i-1,i+b-1}q^{\pi_{ib}-\pi_{i-1,b}})\\&-q^{f_{ij}^-(\pi)}[z_{i-2,\ell(\lambda)}q^{\pi_{i-1,b}-\pi_{i-2,b+1}}q^{B_{\pi_{ij}}f_{ij}^-(\pi)}(1-q^{\pi_{i-1,b+1}-\pi_{i-1,b}})(1-z_{i-1,i+b-1}q^{\pi_{ib}-\pi_{i-1,b}})\\&-z_{i-1,\ell(\lambda)}q^{\pi_{i,b-1}-\pi_{i-1,b}}(1-q^{\pi_{i-1,b}-\pi_{i-1,b-1}})(1-z_{i-1,i+b-2}q^{\pi_{i-1,b}-\pi_{i-2,b}})]\\
        &=-(z_{i-2,\ell(\lambda)}z_{i-1,i+b-1}q^{\pi_{ib}+\pi_{i-1,b+1}-\pi_{i-1,b}-\pi_{i-2,b+1}}-z_{i-2,i+b-2}z_{i-1,\ell(\lambda)}q^{\pi_{i-1,b}-\pi_{i-1,b-1}+\pi_{i,b-1}-\pi_{i-2,b}})\\
        &-(z_{i-2,i+b-2}z_{i-1,\ell(\lambda)}q^{\pi_{i,b-1}-\pi_{i-2,b}}-z_{i-2,\ell(\lambda)}z_{i-1,i+b-1}q^{\pi_{ib}-\pi_{i-2,b+1}})-q^{g_{ib}^+(\sigma)-g_{ib}^+(\pi)}(1-z_{i-1,i+b-1}q^{\pi_{ib}})]\\&+z_{i-1,\ell(\lambda)}[(q^{\pi_{ib}-\pi_{i-1,b}}-q^{\pi_{i,b-1}-\pi_{i-1,b-1}})+ (q^{\pi_{i,b-1}}-q^{\pi_{ib}})+z_{i-2,\ell(\lambda)}[q^{\pi_{i-1,b}-\pi_{i-2,b+1}}-q^{\pi_{i-1,b+1}-\pi_{i-2,b+1}}]
    \end{split}\]
    for $b=j+1,\dots,j+\ell_{ij}-1$. Finally, define $E_{ij}:=\{(i,j),(i,j-1),(i-1,j),(i-1,j-1)\} \cup \{(i-1,j+\ell_{ij}),(i+\omega_{ij},j-1)\}$ for $(i,j) \in \mathcal{O}_1$  and we gather the remaining summands in terms of a function we denote $\Xi_{ij}^{(ab)}$ for $(a,b) \in E_{ij}$:
\begin{center}
\renewcommand{\arraystretch}{1.45}
\setlength{\tabcolsep}{0.6em}
\begin{tabular}{|>{\centering\arraybackslash}m{18em}|m{30em}|}
    \hline
    $(a,b) \in E_{ij}$ & $\Xi_{ij}^{(ab)}$ \\
    \hline
    $(i-1,j-1)$ & $z_{i-1,\ell(\lambda)}[z_{i-2,i+j-3}q^{\pi_{i,j-2}-\pi_{i-2,j-1}}+q^{\pi_{i,j-1}-\pi_{i-1,j-1}}+q^{\pi_{i,j-2}-\pi_{i-1,j-2}}-q^{\pi_{i,j-2}-\pi_{i-1,j-1}}-z_{i-2,i+j-3}q^{\pi_{i-1,j-1}-\pi_{i-1,j-2}+\pi_{i,j-2}-\pi_{i-2,j-1}}]-[b'_{i-1,j-1}(\pi,z;q)-\widetilde{b}_{i-1,j-1}(\pi,z;q)]+z_{i-2,\ell(\lambda)}[q^{\pi_{i-1,j-1}-\pi_{i-2,j}}-q^{\pi_{i-1,j}-\pi_{i-2,j}}]$ \\
    \hline
    $(i-1,j)$ & $-[b_{i-1,j}'(\pi,z;q)-\widetilde{b}_{i-1,j}(\pi,z;q)]+z_{i-2,\ell(\lambda)}[q^{\pi_{i-1,j}-\pi_{i-2,j+1}}-q^{\pi_{i-1,j+1}-\pi_{i-2,j+1}}]$\\
    \hline
    $(i,j-1)$ & $z_{i,\ell(\lambda)}[z_{i-1,i+j-2}q^{\pi_{i+1,j-2}-\pi_{i-1,j-1}}+q^{\pi_{i+1,j-1}-\pi_{i,j-1}}+q^{\pi_{i+1,j-2}-\pi_{i,j-2}}-q^{\pi_{i+1,j-2}-\pi_{i,j-1}}
        -z_{i-1,i+j-2}q^{\pi_{i,j-1}-\pi_{i,j-2}+\pi_{i+1,j-2}-\pi_{i-1,j-1}}]-[b'_{i,j-1}(\pi,z;q)-\widetilde{b}_{i,j-1}(\pi,z;q)]$\\
    \hline
    $(i,j)$ & $[\widetilde{b}_{ij}(\pi,z;q)-\widetilde{b}_{ij}(\sigma,z;q)]$ \\
    \hline
    $(i-1,j+\ell_{ij})$ & $-[b'_{i-1,j+\ell_{ij}}(\pi,z;q)-\widetilde{b}_{i-1,j+\ell_{ij}}(\pi,z;q)]+[\widetilde{b}_{i,j+\ell_{ij}}(\pi,z;q)-\widetilde{b}_{i,j+\ell_{ij}}(\sigma,z;q)]+z_{i-2,\ell(\lambda)}[q^{\pi_{i-1,j+\ell_{ij}}-\pi_{i-2,j+\ell_{ij}+1}}-q^{\pi_{i-1,j+\ell_{ij}+1}-\pi_{i-2,j+\ell_{ij}+1}}]$\\
    \hline
    $(i+\omega_{ij},j-1)$ & $[\widetilde{b}_{i+\omega_{ij},j-1}(\pi,z;q)-\widetilde{b}_{i+\omega_{ij},j-1}(\sigma,z;q)] -[b'_{i+\omega_{ij},j-1}(\pi,z;q)-\widetilde{b}_{i+\omega_{ij},j-1}(\sigma,z;q)]+z_{i+\omega_{ij},\ell(\lambda)}[z_{i+\omega_{ij}-1,i+\omega_{ij}+j-2}q^{\pi_{i+\omega_{ij}+1,j-2}-\pi_{i+\omega_{ij}-1,j-1}}+q^{\pi_{i+\omega_{ij}+1,j-1}-\pi_{i+\omega_{ij},j-1}}+q^{\pi_{i+\omega_{ij}+1,j-2}-\pi_{i+\omega_{ij},j-2}}-q^{\pi_{i+\omega_{ij}+1,j-2}-\pi_{i+\omega_{ij},j-1}}-z_{i+\omega_{ij}-1,i+\omega_{ij}+j-2}q^{\pi_{i+\omega_{ij},j-1}-\pi_{i+\omega_{ij},j-2}+\pi_{i+\omega_{ij}+1,j-2}-\pi_{i+\omega_{ij}-1,j-1}}]$\\
    \hline
\end{tabular}
\end{center}

For $(i,j) \in \mathcal{O}_1$ define
\[
\mathcal{B}_{ij}
=
-\sum_{a=i+1}^{i+\omega_{ij}-1} \Phi_{ij}^{(a)}(\pi)
-\sum_{b=j+1}^{j+\ell_{ij}-1} \Psi_{ij}^{(b)}(\pi)
-\sum_{(a,b) \in E_{ij}} \Xi_{ij}^{(ab)}(\pi)
+\sum_{a=i-1}^{i+\omega_{ij}} z_{a,\ell(\lambda)} .
\]
For $(i,j) \in \widetilde{C}(\mu)$ define
\begin{equation}\label{final1}
\begin{aligned}
\mathcal{B}_{ij}
={}&
-\Bigl[
    \widetilde{b}_{i+1,j}(\pi,z;q)
    -
    \widetilde{b}_{i+1,j}(\sigma,z;q)
\Bigr]
\mathbbm{1}_{(i+1,j)\in\widetilde{F}_{ij}}
-
\Bigl[
    \widetilde{b}_{i,j+1}(\pi,z;q)
    -
    \widetilde{b}_{i,j+1}(\sigma,z;q)
\Bigr]
\mathbbm{1}_{(i,j+1)\in\widetilde{F}_{ij}}
\\
&+
\Bigl[
    b_{ij}'(\pi,z;q)
    -
    b_{ij}(\pi,z;q)
\Bigr]
-
z_{i,\ell(\lambda)}
\Bigl[
    z_{i-1,i+j-1} q^{\pi_{i+1,j-1}-\pi_{i-1,j}}
    +
    q^{\pi_{i+1,j}-\pi_{i,j}}
    +
    q^{\pi_{i+1,j-1}-\pi_{i,j-1}}
\\
&
    -
    q^{\pi_{i+1,j-1}-\pi_{i,j}}
    -
    z_{i-1,i+j-1}
    q^{\pi_{i,j}-\pi_{i,j-1}+\pi_{i+1,j-1}-\pi_{i-1,j}}
\Bigr]
+
z_{i-1,\ell(\lambda)}
(1-q^{\pi_{i,j+1}})
\mathbbm{1}_{(i,j+1)\in\widetilde{F}_{ij}}
\\
&+
z_{i,\ell(\lambda)}
(1-q^{\pi_{i+1,j}})
\mathbbm{1}_{(i+1,j)\in\widetilde{F}_{ij}} .
\end{aligned}
\end{equation}

Observe that for $(i,j) \in \mathcal{O}_1$:
\begin{equation}\label{big2}
\begin{aligned}
\sum_{a=i+1}^{i+\omega_{ij}-1} \Phi_{ij}^{(a)}(\pi)
&=
z_{i,\ell(\lambda)}q^{-\pi_{ij}}
-
z_{i+\omega_{ij}-1,\ell(\lambda)}
q^{-\pi_{i+\omega_{ij}-1,j}}
+
\sum_{a=i+1}^{i+\omega_{ij}-1} z_{a,\ell(\lambda)}-
z_{i,\ell(\lambda)}
q^{\pi_{i+1,j-1}-\pi_{ij}}- 
\\
&
\sum_{a=i+1}^{i+\omega_{ij}-1}
z_{a-1,\ell(\lambda)}
(1-z_{a,a+j-1})
q^{-\pi_{a-1,j}}
(1-q^{\pi_{aj}})+z_{i+\omega_{ij}-1,\ell(\lambda)}
q^{\pi_{i+\omega_{ij},j-1}-\pi_{i+\omega_{ij}-1,j}},
\end{aligned}
\end{equation}

\begin{equation}\label{big1}
\begin{aligned}
&\sum_{b=j+1}^{j+\ell_{ij}-1} \Psi_{ij}^{(b)}(\pi)
={}
-
z_{i-2,\ell(\lambda)}
z_{i-1,i+j+\ell_{ij}-2}
q^{\pi_{i,j+\ell_{ij}-1}
+\pi_{i-1,j+\ell_{ij}}
-\pi_{i-1,j+\ell_{ij}-1}
-\pi_{i-2,j+\ell_{ij}}}-z_{i-1,\ell(\lambda)}q^{\pi_{i,j+\ell_{ij}-1}}
\\
&\qquad+
z_{i-2,i+j-1}
z_{i-1,\ell(\lambda)}
q^{\pi_{i-1,j+1}
-\pi_{i-1,j}
+\pi_{ij}
-\pi_{i-2,j+1}}
+
z_{i-2,\ell(\lambda)}
z_{i-1,i+j+\ell_{ij}-2}
q^{\pi_{i,j+\ell_{ij}-1}-\pi_{i-2,j+\ell_{ij}}}
\\
&\qquad\qquad -
z_{i-1,\ell(\lambda)}
z_{i-2,i+j-1}
q^{\pi_{ij}-\pi_{i-2,j+1}}
+
z_{i-1,\ell(\lambda)}
q^{\pi_{i,j+\ell_{ij}-1}-\pi_{i-1,j+\ell_{ij}-1}}
-
z_{i-1,\ell(\lambda)}
q^{\pi_{ij}-\pi_{i-1,j}}+z_{i-1,\ell(\lambda)}q^{\pi_{ij}}
 ,
\end{aligned}
\end{equation}

\[
\begin{aligned}
&\sum_{(a,b)\in\{(i,j),(i,j-1),(i-1,j),(i-1,j-1)\}}
\Xi_{ij}^{(ab)}(\pi)
=
z_{i,\ell(\lambda)}
+
z_{i-1,\ell(\lambda)}
-
z_{i-1,\ell(\lambda)}
(1-z_{i,i+j-1})
(1-q^{\pi_{ij}})
\\
&-
\Bigl(
    z_{i,\ell(\lambda)}q^{-\pi_{ij}}
    +
    z_{i,\ell(\lambda)}
    q^{\pi_{i+1,j-1}-\pi_{ij}}
    -
    z_{i-1,i+j-1}
    z_{i-2,\ell(\lambda)}
    q^{\pi_{i-1,j+1}-\pi_{i-1,j}+\pi_{ij}-\pi_{i-2,j+1}}+z_{i-1,\ell(\lambda)}q^{\pi_{ij}}
\\
&
    \qquad -
    z_{i-1,\ell(\lambda)}
    z_{i-2,i+j-1}
    q^{\pi_{ij}-\pi_{i-2,j+1}}
    -
    z_{i-1,\ell(\lambda)}
    q^{\pi_{ij}-\pi_{i-1,j}}
\Bigr).
\end{aligned}
\]

To understand the last two points in $E_{ij}$, i.e.
$(i-1,j+\ell_{ij})$ and $(i+\omega_{ij},j-1)$, it is necessary to understand the pairings between hook shapes that appear in $\hat{\mu}/\mu$:
\begin{enumerate}
    \item If $v=(i,j) \in \mathcal{O}_1^0$, then there are four possibilities concerning the points
    $x=(i-2,j+\ell_{ij}+1)$ and $y=(i+\omega_{ij}+1,j-2)$.
    The first one we deal with is $x,y \in \widetilde{C}(\mu)$. The core calculations to consider here are
    \[
    \begin{aligned}
    &\Bigl[
        \widetilde{b}_{i-1,j+\ell_{ij}+1}(\pi,z;q)
        -
        \widetilde{b}_{i-1,j+\ell_{ij}+1}(\sigma,z;q)
    \Bigr]
    +
    \Xi_{ij}^{(i-1,j+\ell_{ij})}(\pi)
    =
    z_{i-1,\ell(\lambda)}
    \Bigl[
        z_{i-2,i+j+\ell_{ij}-1}q^{\pi_{i,j+\ell_{ij}}}
        \\&\qquad-
        z_{i-2,i+j+\ell_{ij}-1}
        q^{\pi_{i,j+\ell_{ij}}+\pi_{i-1,j+\ell_{ij}+1}}
    \Bigr]-
    \Bigl(
        -
        z_{i-2,\ell(\lambda)}
        z_{i-1,i+j+\ell_{ij}-2}
        q^{\pi_{i,j+\ell_{ij}-1}
        +\pi_{i-1,j+\ell_{ij}}
        -\pi_{i-1,j+\ell_{ij}-1}
        -\pi_{i-2,j+\ell_{ij}}}
    \\&\qquad\qquad
        +
        z_{i-2,\ell(\lambda)}
        z_{i-1,i+j+\ell_{ij}-2}
        q^{\pi_{i,j+\ell_{ij}-1}-\pi_{i-2,j+\ell_{ij}}}
        +
        z_{i-1,\ell(\lambda)}
        q^{\pi_{i,j+\ell_{ij}-1}-\pi_{i-1,j+\ell_{ij}-1}}
        -
        z_{i-1,\ell(\lambda)}
        q^{\pi_{i,j+\ell_{ij}-1}}
    \Bigr).
    \end{aligned}
    \]

    and
    \begin{equation}\label{final3}
    \begin{aligned}
    &-\mathcal{B}_y
    +
    \Xi_{ij}^{(i+\omega_{ij},j-1)}(\pi)
    +
    z_{i+\omega_{ij},\ell(\lambda)}
    (1-q^{\pi_{i+\omega_{ij}+1,j-1}})
    =
    -
    z_{i+\omega_{ij}+1,\ell(\lambda)}
    z_{i+\omega_{ij},i+\omega_{ij}+j-1}
    q^{\pi_{i+\omega_{ij}+2,j-2}
    +\pi_{i+\omega_{ij}+1,j-1}}
    \\
    &\qquad +
    z_{i+\omega_{ij}+1,\ell(\lambda)}
    z_{i+\omega_{ij},i+\omega_{ij}+j-1}
    q^{\pi_{i+\omega_{ij}+2,j-2}}+
    z_{i+\omega_{ij}+1,\ell(\lambda)}
    q^{\pi_{i+\omega_{ij}+1,j-1}}+
    z_{i+\omega_{ij},\ell(\lambda)}
    \Bigl[
        q^{\pi_{i+\omega_{ij}+1,j-1}}
    \\
    &\qquad\qquad
        +
        z_{i+\omega_{ij}-1,i+\omega_{ij}+j-1}
        q^{\pi_{i+\omega_{ij}+1,j-1}
        -\pi_{i+\omega_{ij}-1,j}}
                -
        z_{i+\omega_{ij}-1,i+\omega_{ij}+j-1}
        q^{\pi_{i+\omega_{ij}+1,j-1}
        +\pi_{i+\omega_{ij},j}
        -\pi_{i+\omega_{ij}-1,j}}
    \Bigr].
    \end{aligned}
    \end{equation}

    Thus we have shown that
    \begin{equation}\label{final2}
    \begin{aligned}
    &\Bigl[
        \widetilde{b}_{i-1,j+\ell_{ij}+1}(\pi,z;q)
        -
        \widetilde{b}_{i-2,j+\ell_{ij}+1}(\sigma,z;q)
    \Bigr]
    -
    \mathcal{B}_{ij}
    -
    \mathcal{B}_y
    -
    z_{i-2,\ell(\lambda)}
    (1-q^{\pi_{i-1,j+\ell_{ij}+1}})=\\
    &-
    z_{i-1,\ell(\lambda)}
    (1-z_{i,i+j-1})
    (1-q^{\pi_{ij}})-
    z_{i-1,\ell(\lambda)}
    z_{i-2,i+j+\ell_{ij}-1}
    (1-q^{\pi_{i-1,j+\ell_{ij}+1}})
    (1-q^{\pi_{i,j+\ell_{ij}}})
    \\
    &-
    z_{i+\omega_{ij}+1,\ell(\lambda)}
    z_{i+\omega_{ij},i+\omega_{ij}+j-1}
    (1-q^{\pi_{i+\omega_{ij}+1,j-1}})
    (1-q^{\pi_{i+\omega_{ij}+2,j-2}})
    \\
    &-
    z_{i+\omega_{ij},\ell(\lambda)}
    z_{i+\omega_{ij}-1,i+\omega_{ij}+j-1}
    q^{-\pi_{i+\omega_{ij}-1,j}}
    (1-q^{\pi_{i+\omega_{ij}+1,j-1}})
    (1-q^{\pi_{i+\omega_{ij},j}})
    \\
    &-
    \sum_{a=i+1}^{i+\omega_{ij}}
    z_{a-1,\ell(\lambda)}
    (1-z_{a,a+j-1})
    q^{-\pi_{a-1,j}}
    (1-q^{\pi_{aj}})
    \\
    &-
    z_{i-2,\ell(\lambda)}
    (1-z_{i-1,i+j+\ell_{ij}-1})
    (1-q^{\pi_{i-1,j+\ell_{ij}+1}})
    \\
    &-
    z_{i+\omega_{ij},\ell(\lambda)}
    (1-z_{i+\omega_{ij}+1,i+\omega_{ij}+j-1})
    (1-q^{\pi_{i+\omega_{ij}+1,j-1}}).
    \end{aligned}
    \end{equation}

    Now suppose instead that $x\in \lambda/\mu$ and
    $(i-1,j+\ell_{ij})\in G_{i',j'}$ for some
    $(i',j') \in \mathcal{O}_1^- \cup \mathcal{O}_1^{\bullet}$ and
    $y \in \widetilde{C}(\mu)$.
    We denote $u=(i-1,j+\ell_{ij}), v=(i,j+\ell_{ij}),w=(i-1,j+\ell_{ij}+1).$ One can see that
    \[
    \begin{aligned}
    &\Xi_{i',j'}^{u}
    +
    \Bigl[
        \widetilde{b}_v(\pi,z;q)
        -
        \widetilde{b}_v(\sigma,z;q)
    \Bigr]
    +
    z_{i-2,\ell(\lambda)}
    \Bigl[
        q^{\pi_{i-1,j+\ell_{ij}}-\pi_{i-2,j+\ell_{ij}+1}}
        -
        q^{\pi_{i-1,j+\ell_{ij}+1}-\pi_{i-2,j+\ell_{ij}+1}}
    \Bigr]
    \\
    &\quad
    +
    z_{i-1,\ell(\lambda)}
    \Bigl(
        q^{\pi_{i,j+\ell_{ij}-1}-\pi_{i-1,j+\ell_{ij}}}
        -
        q^{\pi_{i,j+\ell_{ij}-1}}
    \Bigr)
    =
    \end{aligned}
    \]
    \begin{equation}\label{final4}
    \begin{aligned}
    &\Xi_{ij}^u
    +
    \Bigl[
        \widetilde{b}_w(\pi,z;q)
        -
        \widetilde{b}_w(\sigma,z;q)
    \Bigr]
+
    z_{i-1,\ell(\lambda)}
    \Bigl[
        z_{i-2,i+j+\ell_{ij}-2}
        q^{\pi_{i,j+\ell_{ij}-1}-\pi_{i-2,j+\ell_{ij}}}
        +
        q^{\pi_{i,j+\ell_{ij}-1}-\pi_{i-1,j+\ell_{ij}-1}}
    \\
    &\qquad
        -
        q^{\pi_{i,j+\ell_{ij}-1}}
        -
        z_{i-2,i+j+\ell_{ij}-2}
        q^{\pi_{i-1,j+\ell_{ij}}
        -\pi_{i-1,j+\ell_{ij}-1}
        +\pi_{i,j+\ell_{ij}-1}
        -\pi_{i-2,j+\ell_{ij}}}
    \Bigr]
=
    z_{i-2,\ell(\lambda)}
    \Bigl[
        q^{\pi_{i-1,j+\ell_{ij}+1}-\pi_{i-2,j+\ell_{ij}+1}}
    \\
    &\qquad\qquad
        -
        z_{i-1,i+j+\ell_{ij}-1}
        q^{\pi_{i,j+\ell_{ij}}
        +\pi_{i-1,j+\ell_{ij}+1}
        -\pi_{i-2,j+\ell_{ij}+1}}
        +
        z_{i-1,i+j+\ell_{ij}-1}
        q^{\pi_{i,j+\ell_{ij}}-\pi_{i-2,j+\ell_{ij}+1}}
        -
        q^{-\pi_{i-2,j+\ell_{ij}+1}}
    \Bigr]=
    \end{aligned}
    \end{equation}
    \[    -
    z_{i-2,\ell(\lambda)}
    q^{-\pi_{i-2,j+\ell_{ij}+1}}[z_{i-1,i+j+\ell_{ij}-1}
    (1-q^{\pi_{i,j+\ell_{ij}}})
    (1-q^{\pi_{i-1,j+\ell_{ij}+1}})
    -
    (1-z_{i-1,i+j+\ell_{ij}-1})
    (1-q^{\pi_{i-1,j+\ell_{ij}+1}})]\]
    where this latter expression is the contribution coming from the interactions between the point
    $u \in \mu$ and its nearest-neighbors. The remaining case where
    $y \in \lambda/\mu$ and $(i+\omega_{ij},j-1) \in G_{i',j'}$ for some
    $(i',j') \in \mathcal{O}_1^+ \cup \mathcal{O}_1^{\bullet}$ can be handled in the exact same manner.

    \item If $(i,j) \in \widetilde{C}(\mu)$, then it must hold that
    $(i-1,j+1),(i+1,j-1) \in \mu$ with
    $(i-1,j+2),(i+1,j) \in \lambda/\mu$.
    The first case we are interested in is $\widetilde{F}_{ij}= \{(i+1,j),(i,j+1)\},\widetilde{F}_{i+1,j-1}=\emptyset$, in which case we denote
    \[
        y=(i,j),\qquad
        v=(i+1,j),\qquad
        u=(i+1,j-1),\qquad
        w=(i+2,j-1),
    \]
    and observe that
    \begin{equation}\label{final7}
    \begin{aligned}
    &\Bigl[
        \widetilde{b}_v(\pi,z;q)
        -
        \widetilde{b}_v(\sigma,z;q)
    \Bigr]
    -
    \mathcal{B}_u
    -
    z_{i,\ell(\lambda)}
    (1-q^{\pi_v})
    -
    z_{i+1,\ell(\lambda)}
    q^{\pi_w}
    \\
    ={}&
    -
    z_{i+1,\ell(\lambda)}
    z_{i,i+j}
    (1-q^{\pi_w})
    (1-q^{\pi_v})
    -
    z_{i,\ell(\lambda)}
    (1-z_{i+1,i+j})
    (1-q^{\pi_v}) .
    \end{aligned}
    \end{equation}

    The second case we investigate is
    $\widetilde{F}_{ij}=\{(i+1,j)\}$, in which case we denote
    \[
        x=(i,j),\qquad
        v=(i,j+1),\qquad
        u=(i+1,j),
    \]
    and observe that
    \begin{equation}\label{final8}
    \begin{aligned}
    &\Bigl[
        \widetilde{b}_v(\pi,z;q)
        -
        \widetilde{b}_v(\sigma,z;q)
    \Bigr]
    -
    \mathcal{B}_x
    -
    \Bigl[
        \widetilde{b}_u(\pi,z;q)
        -
        \widetilde{b}_u(\sigma,z;q)
    \Bigr]+
    z_{i,\ell(\lambda)}
    (1-q^{\pi_{i+1,j}})    -
    z_{i,\ell(\lambda)}
    q^{\pi_{i+1,j}}
    \\
    &\quad
    -
    z_{i-1,\ell(\lambda)}
    (1-q^{\pi_{i,j+1}})
=
    -
    z_{i,\ell(\lambda)}
    z_{i-1,i+j}
    (1-q^{\pi_{i+1,j}})
    (1-q^{\pi_{i,j+1}})-
    z_{i-1,\ell(\lambda)}
    (1-z_{i,i+j})
    (1-q^{\pi_{i,j+1}}).
    \end{aligned}
    \end{equation}

    For the remaining case $\widetilde{F}_{ij}=\{(i+1,j),(i,j+1)\},\widetilde{F}_{i+1,j-1}=\{(i+2,j-1)\}$ the contribution $-\mathcal{B}_{ij}$ to the sum directly from the point $(i,j)$ is $-
    z_{i-1,\ell(\lambda)}
    (1-q^{\pi_{i,j+1}})
    -
    z_{i,\ell(\lambda)}
    q^{\pi_{i+1,j}}.$ We briefly note the telescoping that occurs for adjacent elements of
    $\widetilde{C}(\mu)$. Suppose
    $(i,j),(a,b) \in \widetilde{C}(\mu)$ for which
    $(i-1,j+1),(a+1,b-1) \in \bigcup_{u \in \mathcal{O}_1}G_u$
    and $a-i=b-j$ with $i \leq a$, $j \geq b$. Then we may write $-
    \sum_{s=i}^{a-1}
    z_{s,\ell(\lambda)}
    \bigl(1-q^{\pi_{s+1,j-(s-i)}}\bigr)
    -
    \sum_{s=i-1}^{a-1}
    z_{s+1,\ell(\lambda)}
    q^{\pi_{s+2,j-(s-i)-1}}=
    -
    \sum_{s=i-1}^{a-2}
    z_{s+1,\ell(\lambda)}
    -
    z_{a,\ell(\lambda)}
    q^{\pi_{a+1,b}}$ where the $z_{a,\ell(\lambda)}q^{\pi_{a+1,b}}$ term will eventually cancel with terms from
    \eqref{final2}. The leftover $z$ terms are a result of over-counting we have allowed ourselves in the course of this proof.

    \item If
    $v=(i,j) \in \mathcal{O}_1^+ \cup \mathcal{O}_1^- \cup \mathcal{O}_1^{\bullet}$,
    we can once again use \eqref{final4} to understand the edge which is formed by the intersection of two
    $G_{ij},G_{i',j'}$ hooks.
\end{enumerate}

As we stated in the beginning of the proof, our calculations were done with nondegenerate hooks in mind.
The above calculations hold for degenerate hooks with the only difference in the calculations being slight differences in the way hooks are indexed.
For instance, if $F_{ij}$ is a degenerate hook of vertical length $1$, our conventions allow
$(i-1,j+\ell_{ij})\in \lambda/\mu$, and this was not true for nondegenerate hooks.
This means that $(i-1,j+\ell_{ij}-1)\in \mu$ is the element of $G_{ij}$ with largest vertical coordinate rather than $(i-1,j+\ell_{ij})$, and the calculations we did above would have to be adjusted to reflect that.\newline
\indent Now, one may take the sum $\sum_{(i,j) \in \widetilde{C}(\mu) \cup \mathcal{O}_1} \mathcal{B}_{ij}$
and discard terms that were over-counted, noting that each element was counted at most twice.
In particular, intersections between staircases
$\mathscr{S}(\hat{G}_{ij})$ and $\mathscr{S}(\hat{G}_{i',j'})$ form another staircase shape, and subtracting away the extra terms from this intersection implies that we can neglect contributions coming from these intersections.
Another example of over-counting is the last two terms of \eqref{final1}.
Although these terms make the cancellations in \eqref{final2}, \eqref{final7}, and \eqref{final8} nicer, they are once again a result of the over-counting that occurs when we perform the summation \eqref{generalstaircasestuff} for each individual staircase shape.
Thus, $T_{\lambda,\mu} + U_{\lambda,\mu} = V_{\lambda,\mu}$ follows from \eqref{final5}.

    \end{proof}
    \section{The $q \to 1$ Limit}
    \indent From \eqref{ok9} it can be shown that
    \[
            \tilde{a}_r(n):=\limmy{q}{1}{(1-q)^{2n}a_r(n;q)} = \sum_{\pi \in \Pi^r_n} \prod_{1 \leq i+j-1 \leq r} \frac{1}{(\pi_{ij}-\pi_{i-1,j})!(\pi_{ij}-\pi_{i,j-1})!},
    \]
    where the right-hand-side were the coefficients considered in \cite{neil}. From this fact we can immediately deduce the "scaled" version of $h^r_n$ and show that its $q \to 1$ limit coincides with the corresponding operator from \cite{neil}.\newline
    \indent The sum $\sum_{i=0}^r (q^{n_{i+1}}-q^{n_i})=0$ is telescoping and can be added to $h^r_n$ in order to get the form
        \[
            h^r_n=\sum_{i=0}^r \bigg[q^{n_{i+1}-n_i}L_{n_i}+(1-q^{n_{i+1}-n_i})(1-q^{n_i})\bigg].
        \]
        Now consider the conjugated operator
        \[
            \tilde{h}^r_n=(1-q)^{2n} \circ h^r_n \circ (1-q)^{-2n}=\sum_{i=0}^r \bigg[q^{n_{i+1}-n_i}(1-q)^2L_{n_i}+(1-q^{n_{i+1}-n_i})(1-q^{n_i})\bigg].
        \]
        By dividing the coefficients of $\tilde{h}^r_n$ by $(1-q)^2$, it becomes apparent that $(1-q)^{-2} \tilde{h}^r_n$ converges weakly to $\displaystyle \sum_{i=0}^r [L_{n_i}+n_i(n_{i+1}-n_i)]$. The latter operator is referred to as $h^r_n$ in \cite{neil}. Lastly, recall from \eqref{final6} that $V_{\lambda,\mu}(\sigma,z;q)$ was the potential for which $(G^{\lambda/\mu,\alpha}+V_{\lambda,\mu})A_{\lambda,\mu}=0$ held in the setting of Subsection $5.2$. We thus have
    \[\limmy{q}{1}{(1-q)^{-2}V_{\lambda,\mu}(\sigma,z;q)}=\sum_{(i,j) \in C(\mu)} \sigma_{i+1,j}\sigma_{i,j+1} + \sum_{i=1}^{\ell(\mu)} \beta_{i+1,\mu_i}\sigma_{i,\mu_i+1},\]
    where the latter function is the analogous potential denoted $V_{\lambda,\mu}(\sigma)$ in \cite{neil}.
\end{appendices}

\end{document}